\newtheorem{theorem}{Theorem}[section]
\newtheorem{lemma}{Lemma}[section]
\newtheorem{definition}{Definition}[section]
\newtheorem{remark}{Remark}[section]
\numberwithin{equation}{section}
\numberwithin{figure}{section}
\begin{document}
\captionsetup[figure]{labelfont={default},labelformat={default},labelsep=period,name={Fig.}}

\title[Entropy Conditions with Non-Lipschitz Source Term]{Well-Posedness 
of the Cauchy Problem \\
for First-order Quasilinear Equations \\
with Non-Lipschitz Source Terms \\ 
and Its Applications}

\author{Gaowei Cao}
\address{Gaowei Cao: Wuhan Institute of Physics and Mathematics, 
	Innovation Academy for Precision Measurement Science and Technology, 
	Chinese Academy of Sciences, Wuhan 430071, China}
\email{\tt gwcao@apm.ac.cn}

\author{Gui-Qiang G. Chen$^{\dag}$}
\address{Gui-Qiang G. Chen: Oxford Centre for Nonlinear Partial Differential Equations,
        Mathematical Institute, University of Oxford, Oxford, OX2 6GG, UK}
\email{\tt gui-qiang.chen@maths.ox.ac.uk}

\author{Wei Xiang}
\address{Wei Xiang, Department of Mathematics, City University of Hong Kong, Kowloon, Hong Kong, China}
\email{\tt weixiang@cityu.edu.hk}

\author{Xiaozhou Yang}
\address{Xiaozhou Yang, Wuhan Institute of Physics and Mathematics, 
	Innovation Academy for Precision Measurement Science and Technology, 
	Chinese Academy of Sciences, Wuhan 430071, China}
\email{\tt xzyang@apm.ac.cn}

\date{\today}

\begin{abstract}
We are concerned with the well-posedness 
of the Cauchy problem for the first-order quasilinear equations with non-Lipschitz source terms and the global structures of the multi-dimensional Riemann solutions. 
For such 
quasilinear 
equations with initial data in $L^\infty$,
when the source term $g(t, x, u)$ is only right-Lipschitz (not necessarily left-Lipschitz) in $u$,
we first prove that the Kruzkov
entropy condition is sufficient to guarantee the well-posdeness of entropy solutions.
Next, we analyze  
the structures of global multi-dimensional Riemann solutions
for scalar conservation laws with non-Lipschitz source terms, 
where the Riemann-type initial data consist of two different constant states separated by a smooth hypersurface.
More precisely, we construct the global multidimensional Riemann solutions with non-selfsimilar structures, 
including nonselfsimilar shock waves and nonselfsimilar rarefaction waves, 
and prove that these two kinds of basic waves can be 
expressed via the implicit functions of functional equation determined by the initial discontinuity, 
the flux functions, and the non-Lipschitz source term. 
Moreover, 
we discover two new phenomena:
(i) such kinds of basic waves can disappear in a finite time 
and (ii) the new-type rarefaction wave can contain some weak discontinuities in its interior. 
These behaviors are in stark contrast to the case of the Lipschitz source term,
where these two kinds of basic waves persist globally and the rarefaction waves remain smooth in the interior.
Finally, we provide two examples to respectively demonstrate the uniqueness 
of Riemann solutions
in the case of the source term being non-Lipschitz from left (necessarily right-Lipschitz),
and the non-uniqueness of Riemann solutions in the case of the source term being non-Lipschitz from right.
\end{abstract}

\keywords{First-order quasilinear equations, Non-Lipschitz source terms, Cauchy problem,
Kruzkov entropy condition, Stability and uniqueness, Existence, 
Conservation laws, Balance laws, Well-posedness of characteristic ODEs, 
Riemann problem, Multi-dimensional Riemann solutions with non-selfsimilar structures, 
Multi-dimensional shock waves, Multi-dimensional rarefaction waves, 
Non-uniqueness.\\
$^\dag$Corresponding author.}

\subjclass[2020]{Primary: 35L65, 35L03, 35L60, 35A02, 35A01, 35L67, 35A21, 35B35;
Secondary: 35B30, 35D30, 58K30}
\maketitle

\tableofcontents

\section{Introduction}
We are concerned with the well-posedness
of the Cauchy problem for the first-order quasilinear equations 
with non-Lipschitz source terms for $u=u(t,x) \in \mathbb{R}$:
\begin{eqnarray}
&&u_t+\sum\limits^n_{i=1}f_i(t,x,u)_{x_i}=g(t,x,u)\qquad\,\, \mbox{for $(t,x)\in [0,T]\times \mathbb{R}^n$},\label{1.1}\\[1mm]
&&u|_{t=0}=u_0(x), \label{ID}
\end{eqnarray}
where $u_0(x)\in L^{\infty}(\mathbb{R}^n)$ and $T$ is either a positive number or $\infty$. 

It is well known that, due to the nonlinearity of the flux functions $f_i(t,x,u), i=1, \cdots, n$,
no matter how smooth the initial data function $u_0(x)$ is,
the solution may form shocks generically in a finite time.
Thus, it is necessary to understand the solution in a weak sense, 
namely, the solution as a function with suitable integrability solves 
the Cauchy problem $\eqref{1.1}$--$\eqref{ID}$ in the distributional sense.
In general, since the weak solutions are not unique,
some entropy condition should be employed to 
single out the unique entropy solution among the weak solutions.

For equation \eqref{1.1} in one-dimension with uniformly convex flux function $f(u)\in C^2(\mathbb{R})$, 
the entropy condition, introduced by Oleinik \cite{[OOA]}, 
is sufficient to single out the unique weak solution (physically relevant) among all possible weak solutions. 
In Dafermos \cite{[DCM]} and Lax \cite{[Lax]} (also see the references cited therein),  
such a unique weak solution has been shown to satisfy the regularizing effect that the initial data function in $L^\infty$ 
is regularized to be in $BV_{\rm loc}$ 
instantaneously for the solution, 
and to possess many fine properties such as the regularity,
the decay,
as well as the convergence of approximation schemes, among others. 
For the multi-dimensional case, 
by following the earlier results for entropy solutions in $BV_{\rm loc}$ by Conway--Smoller \cite{[CS]} and Vol$'$pert \cite{[Va]}, 
the existence and stability 
of entropy solutions in $L^\infty$ was established by Kruzkov \cite{[Ksn2]}, 
where the uniqueness was enforced by the so-called Kruzkov entropy condition:
A weak solution $u\in L^\infty$ is an entropy solution in the sense of Kruzkov if,  
for any $k\in \mathbb{R}$, 
\begin{align}\label{eta0}
\big(|u-k|\big)_t&+
\sum_{i=1}^n\big({\rm sgn}(u-k)\big(f_i(t,x,u)-f_i(t,x,k)\big)\big)_{x_i}\nonumber\\
&
-{\rm sgn}(u-k)\Big({-}\sum_{i=1}^nf_{ix_i}(t,x,k)+g(t,x,u)\Big)\leq 0
\qquad \,\, \mbox{in $\mathcal{D}^\prime$}.
\end{align}

For the Riemann solutions of equation $\eqref{1.1}$ with selfsimilar structures, 
by using the generalized characteristic method, Zhang-Zheng \cite{[ZZ2]} (also see \cite{[CH]}) 
constructed the selfsimilar solutions for $2$-D scalar conservation law with the Riemann initial data 
consisting of four pieces of constants separated by the coordinate axes, and Chen-Li-Tan \cite{[CLT]}, Guckenheimer \cite{[Gj1]}, 
and Sheng \cite{[Sw]} constructed the selfsimilar solutions for the case of the Riemann initial data
consisting of two or three pieces of constants, separated by two or three rays; 
see \cite{[Gj2],[Lwb],[Wdh],[YZ],[ZZ1]} for more related results on the selfsimilar solutions.

For the Riemann solutions of equation $\eqref{1.1}$ with non-selfsimilar structures, 
Yang \cite{[Yxz]} firstly constructed the non-selfsimilar solutions for the $n$-D scalar conservation law 
with the Riemann-type initial data consisting of two pieces of constants separated by a hypersurface. 
For the $n$-D scalar conservation law with a Lipschitz source term, 
Cao-Xiang-Yang \cite{[CXY]} obtained the global structures of non-selfsimilar shock waves and non-selfsimilar 
rarefaction waves for the Riemann-type initial data consisting of two pieces of constants separated by a hypersurface.
See \cite{[CHY],[CKXY],[QCYZ]} for more related results on the non-selfsimilar solutions.

\vspace{2pt}
The first motivation of this paper is that some of the first-order scalar equations, 
reduced from 
hyperbolic systems of conservation laws, may not possess a Lipschitz source term, 
such as the multi-dimensional compressible Euler equations 
with spherical symmetry ({\it cf.} \cite{[CSM],[CW],[DWX],[FGXZ],[HLY]}).
Thus, the well-posedness of the Cauchy problem $\eqref{1.1}$--$\eqref{ID}$ with a non-Lipschitz source term  
is an important topic for the mathematical theory of hyperbolic conservation laws. 
The second motivation is that the Riemann solutions for equation $\eqref{1.1}$ with a non-Lipschitz source term 
can behave differently from the Riemann solutions for the case of the Lipschitz source term, 
and there are no related results found in the previous literature. 
The third motivation is that the theory of differential equations out of the smooth context arouses 
a big interest not only in the theoretical importance of such an extension but also in the study of 
various physical models of the mechanics of fluids. 
As examples, these especially include: 
the linear transport equations and the ordinary differential equation (ODE) flows 
(see DiPerna-Lions \cite{[DL]}, Ambrosio \cite{[AL]}, Alberti-Bianchini-Crippa \cite{[ABC]}, 
Ambrosio-Crippa \cite{[AC]}, Ambrosio-Colombo-Figalli \cite{[ACF]}, Bianchini-Bonicatto \cite{[BB]}), 
the divergence-measure fields (see Chen-Frid \cite{[CF99],[CF03]}, Chen-Torres \cite{[CT05]}, 
Chen-Ziemer-Torres \cite{[CZT]}), among others; 
also see Filippov \cite{[FAF]}, Dafermos \cite{[DCMG]}, Ambrosio-Crippa-De Lellis-Otto-Westdickenberg \cite{[ACD]}, 
Bouchut-James-Mancini \cite{[BJM]}, Lions-Seeger \cite{[LS]}, and the references therein.

\smallskip
In this paper, we establish the well-posedness 
of the Cauchy problem $\eqref{1.1}$--$\eqref{ID}$ in $L^{\infty}$ with a non-Lipschitz source 
term and then construct global multidimensional Riemann solutions 
with non-selfsimilar structures. In particular, these include the following results:

\begin{enumerate}
\item[(i)] For the source term $g(t,x,u)$ 
being only right-Lipschitz (not necessarily left-Lipschitz) in $u$, the Kruzkov entropy condition $\eqref{eta0}$ 
is still able to enforce the stability and uniqueness of the solutions 
of the Cauchy problem $\eqref{1.1}$--$\eqref{ID}$, and then the existence of the solutions is also established.
As far as we know, 
This is the first well-posedness result for scalar conservation laws with non-Lipschitz source terms 
in the multi-dimensional case since Kruzhkov’s work in 1970.

\item[(ii)] For the Riemann problem of equation $\eqref{1.1}$ with 
the source term $g(u)$ being only right-Lipschitz (not necessarily left-Lipschitz), 
based on the well-posedness of the related characteristic ODEs established in this paper, 
the Riemann solutions with non-selfsimilar structures, including shock waves and rarefaction waves, 
are constructed by giving their expressions determined by the implicit functions. 
We discover two new phenomena: The shock waves and rarefaction waves can disappear in a finite time, 
and the rarefaction wave can contain some weak discontinuities in its interior (see Remarks \ref{rem:6.1}--\ref{rem:6.3} below).

\item[(iii)]  Two examples for the case of non-Lipschitz source terms are provided. 
One example is to construct the unique Riemann solution
when the source term is non-Lipschitz from left (but Lipschitz from right). 
Another example is to show that the Riemann solutions are not unique when the source term is not right-Lipschitz (but Lipschitz from left).
\end{enumerate}
\noindent

\medskip
A function $h(t,x,u)\in C([0,T]\times\mathbb{R}^n\times\mathbb{R})$ 
is called to be Lipschitz in $u$ on a compact set $\Omega\times[a,b]$ if there exists a constant $C>0$ such that, 
for any $(t,x)\in \Omega$, 
\begin{equation}\label{gnL}
\mathop{ \sup}\limits_{u,v \in [a,b]}\frac{|h(t,x,u)-h(t,x,v)|}{|u-v|}\leq C.
\end{equation}
A function $h(t,x,u)\in C([0,T]\times\mathbb{R}^n\times\mathbb{R})$ 
is called to be right-Lipschitz in $u$ on a compact set $\Omega\times[a,b]$ if there exists a constant $L$ such that, for any $(t,x)\in \Omega$, 
\begin{equation}\label{gnRL}
L(t,x;[a,b]):=\mathop{ \sup}\limits_{u,v \in [a,b]}\frac{h(t,x,u)-h(t,x,v)}{u-v}\leq L.
\end{equation}
Notice that the right-Lipschitz condition $\eqref{gnRL}$ for the source term $g(t,x,u)$ 
is equivalent to saying that $g(t,x,u)\in C([0,T]\times\mathbb{R}^n\times\mathbb{R})$ 
and satisfies that, for any $(t,x)\in \Omega$,
the first-order difference of $g(t,x,\cdot)$ on the bounded interval $[a,b]$ 
possesses a finite supremum. 
In general, such a source term $g(t,x,u)$ allows itself 
to be non-Lipschitz from left in $u$, {\it i.e.}, 
the infimum of the first-order difference of $g(t,x,\cdot)$ 
may equal ${-}\infty$.

Our proofs for the well-posedness of the entropy solutions for the case of the non-Lipschitz source term are motivated by the arguments in Kruzkov \cite{[Ksn2]}, 
in which the stability and uniqueness were proved by the method so-called {\it double variables} and the existence was established by the vanishing viscosity method. 
The proof of the well-posedness of the characteristic ODEs for equation $\eqref{1.1}$ with a non-Lipschitz source term in the form of $g(u)$ is established 
by the subtle and careful analysis of the characteristic ODEs in the various cases classified by the zero point set of $g(u)$. 
The proof of the Riemann solutions with non-selfsimilar structures is based on both {\it Condition} $(\mathcal{H})$ 
as the multi-dimensional version of the convex condition on the flux function in some sense and 
the expressions of multi-dimensional shock waves and rarefaction waves determined by the implicit functions.

\smallskip
This paper is organized as follows:

In \S 2, we first introduce some basic concepts, 
including the weak solutions, the Kruzkov entropy condition,
the geometric entropy condition, and {\it Condition} $(\mathcal{H})$, 
and then present the main theorems of this paper.

In \S 3, we complete the proofs of Theorem $\ref{th:5.1}$
on the stability and Theorem $\ref{th:5.2}$ on the uniqueness of the case that 
the source term $g(t,x,u)$ is only right-Lipschitz (not necessarily left-Lipschitz) in $u$. 
In \S 4, we complete the proof of Theorem $\ref{th:ex}$ 
on the existence for the case that the source term $g(t,x,u)$ is 
only right-Lipschitz (not necessarily left-Lipschitz) in $u$. 

Then, in \S 5, we prove Lemma $\ref{lem:3.1}$ on the well-posedness of the characteristic ODEs for equation $\eqref{1.1}$ 
with non-Lipschitz source terms in the form of $g(u)$ for the subsequent development.
In \S 6, we complete the proof of Theorem $\ref{th:4.4}$ 
on the global multidimensional Riemann solutions with non-selfsimilar structures, 
including the expressions of shock waves and rarefaction waves. 

Finally, in \S 7, we present two examples with the flux functions $(f_1(u), f_2(u))=(\frac{1}{2}u^2, \frac{1}{4}u^4)$ 
and the non-Lipschitz source term $-u^{\frac{1}{3}}$ or $u^{\frac{1}{3}}$, respectively. 
The example with non-Lipschitz source term $-u^{\frac{1}{3}}$ is used to demonstrate the uniqueness of basic waves 
and the new phenomenon that the shock waves and rarefaction waves disappear in a finite time. 
The example with non-Lipschitz source term $u^{\frac{1}{3}}$ is used to demonstrate that the right-Lipschitz 
condition on the source term is necessary to guarantee the uniqueness via showing that, for the source term $u^{\frac{1}{3}}$ 
being non-Lipschitz from right, 
there exist more than one Riemann solutions for both cases of shock waves and rarefaction waves.

\section{Basic Notions and Main Theorems}
In this section, we first introduce some basic concepts and then present the main theorems of this paper. 
Let
\begin{align*}
    \Pi_{_T}:=[0,T]\times \mathbb{R}^n,\quad 
    \mathring{\Pi}_{_T}:=(0,T)\times \mathbb{R}^n
    \qquad\,\, {\rm for}\ T>0.
\end{align*}

Similar to the Einstein summation convention, we use the summation convention that, 
if one of the index variables $i,j,k$ appears twice in a monomial, 
it implies that the summation is taken from $1$ to $n$.

\subsection{Stability and uniqueness}
We now present the notion of weak solutions and the Kruzkov entropy condition,
and then state the main theorems on the stability and uniqueness of the entropy 
solutions, determined by the Kruzkov entropy condition, 
for the case of non-Lipschitz source terms.

\begin{definition}[Weak Solutions]\label{def:2.1}
A bounded measurable function $u=u(t,x)$ is called a weak solution of 
the Cauchy problem $\eqref{1.1}$--$\eqref{ID}$ in $\Pi_{_T}$ if,
for any test function $\phi=\phi(t,x)\in C^\infty_{c}([0,T) \times \mathbb{R}^n)$, 
\begin{equation}\label{2.1}
\iint_{\Pi_{_T}}
\big\{u\phi_t+f_i(t,x,u)\phi_{x_i}+g(t,x,u)\phi\big\}
\,{\rm d}t{\rm d}x
+\int_{\mathbb{R}^n}u_0(x)\phi(0,x)\,{\rm d}x=0.
\end{equation}
\end{definition}

\begin{definition}[Entropy Solutions]\label{def:2.2} 
A bounded measurable function $u=u(t,x)$ 
is called an 
entropy solution of the Cauchy problem $\eqref{1.1}$--$\eqref{ID}$ in $\Pi_{_T}$  
if it satisfies the Kruzkov entropy condition{\rm :} For any constant $k \in \mathbb{R}$ and any test function $\phi\in C^\infty_{c}(\mathring{\Pi}_{_T})$ with $\phi\geq 0$, 
\begin{align}\label{2.2}
\iint_{\Pi_{_T}}
\big\{&|u-k|\phi_t 
+{\rm sgn}(u-k)(f_i(t,x,u)-f_i(t,x,k))\phi_{x_i}\nonumber\\
&
+{\rm sgn}(u-k)\big({-}f_{ix_i}(t,x,k)+g(t,x,u)\big)\phi\big\}
\,{\rm d}t{\rm d}x\geq 0,
\end{align}
and, in addition, there exists a measure zero set $\mathcal{Z}\subset [0,T]$ such that $u(t,x)$ 
is defined almost everywhere in $\mathbb{R}^n$ for any $t\in [0,T]\setminus\mathcal{Z}$ and 
\begin{align}\label{2.2a}
\lim_{\genfrac{}{}{0pt}{2}{t\rightarrow 0}{t\in[0,T]\setminus\mathcal{Z}}}
\int_{B_R(0)}|u(t,x)-u_0(x)|\,{\rm d}x=0
\end{align}	
for any closed ball $B_R(0):=\{x\in\mathbb{R}^n\,:\, |x|\leq R\}$.
\end{definition}

\begin{remark}
Since the test function $\phi$ in $\eqref{2.2}$ is nonnegative, 
by taking $k=\pm\sup|u(t,x)|$ into $\eqref{2.2}$, 
it is direct to see that the 
entropy solution defined by {\rm Definition} $\ref{def:2.2}$ 
satisfies the integral identity $\eqref{2.1}$ and hence is a weak solution in the sense of {\rm Definition} $\ref{def:2.1}$.    
\end{remark}

\smallskip
The characteristic cone $\mathcal{K}$ is defined by
\begin{equation}\label{5.7}
\mathcal{K}:=\big\{(t,x)\,:\, |x| \leq R-Nt \quad{\rm for}\,\, t\in [0,T_0]\big\},
\end{equation}
where $T_0:=\min\{T,RN^{-1}\}$ with $N:=N_M(R)$ given by
\begin{equation}\label{5.8}
N= N_M(R):=\mathop{\max}\limits_{\genfrac{}{}{0pt}{3}{(t,x)\in [0,T]\times B_R(0)}{|u|\leq M}}
\Big(\sum^n_{i=1} \big(f_{iu}(t,x,u)\big)^2 \Big)^{\frac{1}{2}}.
\end{equation}

Our first two main theorems are on the stability and uniqueness.
\begin{theorem}\label{th:5.1}
Let $f_i(t,x,u)\in C^1([0,T]\times\mathbb{R}^n\times\mathbb{R})$ be the flux functions of $\eqref{1.1}$ 
with continuous derivatives $f_{ix_ju}(t,x,u)$ and $f_{itu}(t,x,u)$, 
and let the source term $g(t,x,u)\in C([0,T]\times\mathbb{R}^n\times\mathbb{R})$ 
satisfy the right-Lipschitz condition in $u$ on any compact set as in $\eqref{gnRL}$.
Assume that $u(t,x)$ and $v(t,x)$ are two 
entropy solutions of 
the Cauchy problem $\eqref{1.1}$--$\eqref{ID}$ in $\Pi_{_T}$ 
with initial data functions $u_0(x)$ and $v_0(x)$, respectively, 
where $|u(t,x)| \leq M$ and $|v(t,x)| \leq M$ 
almost everywhere in the cylinder $[0,T]\times B_R(0)$. 
Then, for almost all $t\in [0,T_0]$,
\begin{equation}\label{5.9}
\int_{S_t}|u(t,x)-v(t,x)|\,{\rm d}x \leq
e^{Lt}\int_{S_0}|u_0(x)-v_0(x)|\,{\rm d}x,
\end{equation}
where $L:=\sup_{(t,x)\in \mathcal{K}}L(t,x;[-M,M])<\infty$, 
and $S_{t}:=B_{R-Nt}(0)$ is the cross-section of the characteristic cone $ \mathcal{K}$ in $\eqref{5.7}$ 
at time $t\in [0,T_0]$.
\end{theorem}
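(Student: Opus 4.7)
The plan is to adapt Kruzkov's doubling-of-variables method to accommodate the one-sided hypothesis $\eqref{gnRL}$ on the source. The guiding observation is that, after the symmetric pairing of the entropy inequalities for $u$ and $v$, the source contribution reduces to ${\rm sgn}(u-v)\bigl(g(t,x,u)-g(t,x,v)\bigr)$, which is precisely the quantity bounded from above by $L|u-v|$ under $\eqref{gnRL}$; the (possibly $-\infty$) left difference quotient of $g$ in $u$ never enters.

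First, for a nonnegative test function $\phi(t,x,s,y)\in C_c^\infty(\mathring{\Pi}_{_T}\times\mathring{\Pi}_{_T})$, I would test $\eqref{2.2}$ for $u(t,x)$ against $\phi$ with $k$ taken to be $v(s,y)$ and integrate over $(s,y)$, and symmetrically test $\eqref{2.2}$ for $v(s,y)$ with $k=u(t,x)$ and integrate over $(t,x)$. Adding the two inequalities and specializing
$$\phi(t,x,s,y)=\psi\!\Bigl(\tfrac{t+s}{2},\tfrac{x+y}{2}\Bigr)\,\omega_\varepsilon\!\Bigl(\tfrac{t-s}{2}\Bigr)\,\omega_\varepsilon^{\,n}\!\Bigl(\tfrac{x-y}{2}\Bigr),$$
where $\omega_\varepsilon$ is a standard mollifier, I would pass to the limit $\varepsilon\to 0$ via the Lebesgue differentiation theorem to obtain, for every nonnegative $\psi\in C_c^\infty(\mathring{\Pi}_{_T})$,
$$\iint_{\Pi_{_T}}\!\!\Bigl\{|u-v|\psi_t+{\rm sgn}(u-v)\bigl(f_i(t,x,u)-f_i(t,x,v)\bigr)\psi_{x_i}+{\rm sgn}(u-v)\bigl(g(t,x,u)-g(t,x,v)\bigr)\psi\Bigr\}\,{\rm d}t{\rm d}x\geq 0.$$
The regularity hypotheses on $f_{ix_ju}$ and $f_{itu}$ guarantee that the auxiliary terms produced by the explicit $(t,x)$-dependence of $f_i$ are themselves Lipschitz in $(u,v)$ and therefore absorb into the Gronwall constant.

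Next, invoking $\eqref{gnRL}$ on the region $\{|u|,|v|\leq M\}$ gives
$${\rm sgn}(u-v)\bigl(g(t,x,u)-g(t,x,v)\bigr)=\frac{g(t,x,u)-g(t,x,v)}{u-v}\,|u-v|\leq L\,|u-v|,$$
so the previous inequality yields
$$\iint_{\Pi_{_T}}\!\!\Bigl\{|u-v|\psi_t+{\rm sgn}(u-v)\bigl(f_i(t,x,u)-f_i(t,x,v)\bigr)\psi_{x_i}+L\,|u-v|\psi\Bigr\}\,{\rm d}t{\rm d}x\geq 0.$$
I would then localize by choosing $\psi(t,x)=\alpha(t)\,\chi\bigl(|x|-R+Nt\bigr)$, with $\alpha$ a smooth decreasing cutoff approximating the indicator of $[0,\tau]$ and $\chi$ a smooth decreasing approximation of the indicator of $(-\infty,0]$, so that $\psi$ is supported in the characteristic cone $\mathcal{K}$ of $\eqref{5.7}$. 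By $\eqref{5.8}$ and the Cauchy--Schwarz inequality, the radial flux difference is bounded by $N|u-v|$ on the lateral surface of $\mathcal{K}$, and since $\chi'\leq 0$, the $\chi'$-weighted boundary contribution is nonpositive and may be discarded. Passing to the limit and identifying the initial trace in $L^1$ through $\eqref{2.2a}$ yields, for a.e.\ $\tau\in[0,T_0]$,
$$\int_{S_\tau}|u(\tau,x)-v(\tau,x)|\,{\rm d}x\leq \int_{S_0}|u_0(x)-v_0(x)|\,{\rm d}x+L\int_0^\tau\!\!\int_{S_t}|u-v|\,{\rm d}x{\rm d}t,$$
and $\eqref{5.9}$ then follows from Gronwall's inequality.

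The most delicate point is the third step: after symmetrization only the \emph{right} difference quotient of $g$ in $u$ is produced, and it is $\eqref{gnRL}$ that bounds this quantity exactly. Had only a left-Lipschitz hypothesis been available, the same manipulation would yield an inequality with the wrong orientation, and no Gronwall argument would close the estimate --- an obstruction that the counter-example in \S 7 will exploit to produce genuine non-uniqueness.
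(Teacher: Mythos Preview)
Your proposal is correct and follows essentially the same route as the paper: Kruzkov's doubling of variables, passage to the limit in the mollification parameter, localization in the characteristic cone via a cutoff $\psi$, and Gronwall. One small clarification: the auxiliary terms produced by the explicit $(t,x)$-dependence of the $f_i$ do not merely ``absorb into the Gronwall constant'' as you write---they cancel entirely in the limit (this is the $I_3$-analysis in the paper, inherited verbatim from Kruzkov's original argument and relying on the continuity of $f_{ix_ju}$ and $f_{itu}$), so the constant $L$ in \eqref{5.9} comes \emph{solely} from the right-Lipschitz bound on $g$, exactly as the statement requires.
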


\begin{theorem}\label{th:5.2} 
If $N_M(R)$ in $\eqref{5.8}$ satisfies 
\begin{equation}\label{nmr}
    \lim_{R\rightarrow \infty} R^{-1}N_M(R)=0\qquad {\rm for\ any}\ M>0,
\end{equation}
then, under the assumptions 
in {\rm Theorem $\ref{th:5.1}$}, 
the 
entropy solution of the Cauchy problem $\eqref{1.1}$--$\eqref{ID}$ in $\Pi_{_T}$ is unique.
\end{theorem}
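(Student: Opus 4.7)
\medskip

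\noindent\textbf{Proof proposal for Theorem \ref{th:5.2}.} The plan is to derive uniqueness as a direct consequence of the stability estimate \eqref{5.9} in Theorem \ref{th:5.1}, exploiting the growth condition \eqref{nmr} to ensure that the family of characteristic cones $\mathcal{K}=\mathcal{K}(R)$ exhausts the strip $\Pi_{_T}$ as $R\to\infty$. The only real issue is the bookkeeping that links the parameter $R$ (which determines the height $T_0=\min\{T,R/N_M(R)\}$ and the cross-sections $S_t=B_{R-N_M(R)t}(0)$ of the cone) to an arbitrary target point $(t^\ast,x^\ast)\in\mathring{\Pi}_{_T}$.

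\smallskip

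First I would fix two entropy solutions $u,v$ of \eqref{1.1}--\eqref{ID} sharing the same initial datum $u_0$, and pick $M>0$ so that $|u|,|v|\leq M$ almost everywhere in $\Pi_{_T}$. Fix an arbitrary $T^\prime\in(0,T)$ and an arbitrary radius $R^\ast>0$; the aim is to show $u=v$ a.e.\ on $[0,T^\prime]\times B_{R^\ast}(0)$, which by letting $T^\prime\uparrow T$ and $R^\ast\uparrow\infty$ yields $u=v$ a.e.\ on $\Pi_{_T}$.

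\smallskip

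The key step is the choice of $R$. Write $\varepsilon(R):=R^{-1}N_M(R)$, so that by \eqref{nmr} we have $\varepsilon(R)\to 0$ as $R\to\infty$. I would choose $R$ so large that simultaneously
\begin{equation*}
\varepsilon(R)\leq \tfrac{1}{2T^\prime},\qquad R\geq 2\bigl(R^\ast+N_M(R)T^\prime\bigr),
\end{equation*}
which is possible since the first inequality is automatic for all large $R$, and the second inequality, rewritten as $R(1-T^\prime\varepsilon(R))\geq R^\ast$, is then implied by $R\geq 2R^\ast$. With this $R$, the height of the cone satisfies $T_0=\min\{T,R/N_M(R)\}\geq T^\prime$, and every cross-section obeys $S_t\supseteq B_{R^\ast}(0)$ for all $t\in[0,T^\prime]$. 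Applying Theorem \ref{th:5.1} with $u_0\equiv v_0$ gives $\int_{S_t}|u(t,x)-v(t,x)|\,{\rm d}x=0$ for almost every $t\in[0,T^\prime]$; restricting the integral to $B_{R^\ast}(0)\subseteq S_t$ and invoking Fubini yields $u=v$ a.e.\ on $[0,T^\prime]\times B_{R^\ast}(0)$.

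\smallskip

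The main (though mild) obstacle is verifying that the growth hypothesis \eqref{nmr} is precisely what is needed to force the base radius $R-N_M(R)T^\prime$ of the cone to diverge to $+\infty$ uniformly for $t$ in any fixed time interval. Without \eqref{nmr}, the cones would cover only a proper subdomain of $\Pi_{_T}$ and Theorem \ref{th:5.1} would give only finite-speed-of-propagation-type uniqueness rather than full uniqueness in $\Pi_{_T}$. Once the selection of $R$ is carried out as above, the remainder of the argument is a straightforward combination of Theorem \ref{th:5.1} and the arbitrariness of $(T^\prime,R^\ast)$, and I anticipate no further difficulties.
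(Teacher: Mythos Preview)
Your proposal is correct and follows essentially the same route as the paper: both arguments observe that hypothesis \eqref{nmr} forces the characteristic cones $\mathcal{K}(R)$ to exhaust $\Pi_{_T}$, so that Theorem \ref{th:5.1} applied with $u_0\equiv v_0$ yields $u=v$ a.e.\ everywhere. The paper compresses this into a single sentence, whereas you spell out the bookkeeping for the choice of $R$; note only the minor slip that your displayed inequality $R\geq 2(R^\ast+N_M(R)T^\prime)$ is not literally the same as its claimed ``rewriting'' $R(1-T^\prime\varepsilon(R))\geq R^\ast$, but the latter is all you actually need and your justification of it (via $\varepsilon(R)\leq 1/(2T^\prime)$ and $R\geq 2R^\ast$) is sound.
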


\smallskip
\subsection{Existence}
We now present the theorem on the existence of the Kruzkov entropy solutions.
Firstly, we give the basic assumptions on the smoothness and boundedness of 
the flux functions $f_i(t,x,u)$ and the source term $g(t,x,u)$ and their derivatives: 

\medskip
\noindent
(i) The smoothness of $f_i(t,x,u)$ and $g(t,x,u)$:
\begin{enumerate}
    \item [(a)]  $f_i(t,x,u)\in C^1(\Pi_{_T}\times\mathbb{R}), i=1,2,\cdots, n$, have continuous 
    derivatives $f_{iux_j}(t,x,u)$ and $f_{ix_ix_j}(t,x,u)$; 

 \vspace{2pt}
    \item [(b)]  $g(t,x,u)\in C(\Pi_{_T}\times\mathbb{R})$ is continuously differentiable in $(t,x)\in\Pi_{_T}$ 
    for any $u\in\mathbb{R}$, and satisfies the right-Lipschitz condition in $u$ on any compact set as in $\eqref{gnRL}$.
\end{enumerate}

\noindent
(ii) The boundedness of $f_i(t,x,u)$ and $g(t,x,u)$ with their derivatives:
\begin{enumerate}
    \item [(a)]   $f_{iu}(t,x,u)$, $i=1,2,\cdots,n$, are uniformly bounded for $(t,x,u)\in D_M:= \Pi_{_T}\times [-M,M]$, ${\it i.e.}$,
    \begin{equation}\label{ex0}
\bar{N}:= N_M(\infty)=\mathop{\max}\limits_{(t,x,u)\in D_M}
\Big(\sum^n_{i=1} \big(f_{iu}(t,x,u)\big)^2 \Big)^{\frac{1}{2}}<\infty.
\end{equation}
    \item [(b)]  $f_{ix_i}(t,x,u)$ and $g(t,x,u)$ satisfy that 
    \begin{eqnarray}
&&\sup_{\Pi_{_T}}|f_{ix_i}(t,x,0)-g(t,x,0)|\leq c_0,\qquad 
\sup_{\Pi_{_T}\times \mathbb{R}}\big({-}f_{ix_iu}(t,x,u)\big)\leq c_1,\label{ex1}\\[1mm]
&&\sup_{(t,x)\in\Pi_{_T}}L(t,x;\mathbb{R})=
\sup_{(t,x)\in\Pi_{_T};\, u,v\in\mathbb{R}}
\frac{g(t,x,u)-g(t,x,v)}{u-v}\leq c_2. \label{ex2}
\end{eqnarray}
\end{enumerate}

We now present the main theorem on the existence of the 
entropy solutions.
\begin{theorem}\label{th:ex}
Under the assumptions 
as in ${\rm (i)}$--${\rm (ii)}$ with $M:=(M_0+c_0T)e^{1+|c_1+c_2|T}$ 
for $M_0:=\|u_0(x)\|_{L^{\infty}(\mathbb{R}^n)}$, 
there exists an 
entropy solution of the Cauchy problem $\eqref{1.1}$--$\eqref{ID}$ 
in $\Pi_{_T}$. 
\end{theorem}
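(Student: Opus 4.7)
The plan is to prove existence by a double approximation: first smoothen $g$ in the $u$-variable to obtain a Lipschitz source so that a classical theorem applies, and then pass to the limit using the $L^1$-stability estimate of Theorem~\ref{th:5.1}. With a standard one-dimensional mollifier $\rho_\delta$, set
\begin{equation*}
g^\delta(t,x,u):=\int_{\mathbb R}g(t,x,u-y)\rho_\delta(y)\,{\rm d}y,
\end{equation*}
which is smooth in $u$ (hence Lipschitz in $u$), is still right-Lipschitz in $u$ with the \emph{same} constant $c_2$ (a direct computation shows this one-sided bound is preserved under convolution), remains continuously differentiable in $(t,x)$, and satisfies (\ref{ex1})--(\ref{ex2}) with $c_0$ replaced by $c_0+o(1)$ as $\delta\to 0$. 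Mollify the initial datum in space to $u_0^\delta\in C^\infty(\mathbb R^n)\cap L^\infty$ with $\|u_0^\delta\|_{L^\infty}\le M_0$ and $u_0^\delta\to u_0$ almost everywhere and in $L^1_{\rm loc}$.

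For each $\delta>0$, the classical Kruzkov existence theorem (applicable because $g^\delta$ is Lipschitz in $u$) yields an entropy solution $u^\delta$ of the Cauchy problem \eqref{1.1}--\eqref{ID} with source $g^\delta$ and datum $u_0^\delta$. The uniform $L^\infty$-bound $\|u^\delta\|_{L^\infty(\Pi_{_T})}\le M$ is obtained by inspecting the parabolic regularization used in Kruzkov's construction: at an interior positive maximum of a smooth approximant, the spatial gradient vanishes and the Laplacian is nonpositive, so that $u^\delta_t\le g^\delta(t,x,u^\delta)-\sum_i f_{ix_i}(t,x,u^\delta)$; invoking the right-Lipschitz bound (\ref{ex2}) and the derivative bound (\ref{ex1}) reduces this to $\phi'(t)\le c_0+(c_1+c_2)\phi(t)$ for $\phi(t):=\max_x u^\delta(t,x)$, which Gr\"onwall integrates to $\phi(t)\le (M_0+c_0T)e^{|c_1+c_2|T}$. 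The symmetric argument at a negative minimum closes the estimate, with the slack factor $e^1$ in $M$ absorbing the mollification perturbations.

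Next, a bookkeeping extension of the Kruzkov doubling argument in the proof of Theorem~\ref{th:5.1}---allowing the two solutions to have \emph{different} sources---yields, for any $\delta,\delta'>0$ and the characteristic cone $\mathcal K$ from \eqref{5.7},
\begin{equation*}
\int_{S_t}|u^\delta-u^{\delta'}|(t,x)\,{\rm d}x\le e^{Lt}\int_{S_0}|u_0^\delta-u_0^{\delta'}|\,{\rm d}x+\int_0^t\!\!\int_{S_s}|g^\delta-g^{\delta'}|(s,x,u^{\delta'}(s,x))\,{\rm d}x{\rm d}s,
\end{equation*}
with $L$ depending only on $c_2$ and $M$, hence uniform in $\delta$. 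The first term vanishes as $\delta,\delta'\to 0$ because $u_0^\delta$ is Cauchy in $L^1_{\rm loc}$; the second does so because $g^\delta\to g$ locally uniformly on the compact range $\{|u|\le M\}$ and the inner integral is over a bounded set. Consequently $\{u^\delta\}$ is Cauchy in $L^1_{\rm loc}(\Pi_{_T})$, and a subsequence converges almost everywhere to a limit $u$ with $\|u\|_{L^\infty}\le M$.

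Passage to the limit in the Kruzkov entropy inequality for $u^\delta$ is then essentially mechanical: by a.e.\ convergence $u^\delta\to u$ and locally uniform convergence $g^\delta\to g$, each integrand in (\ref{2.2}) converges pointwise a.e., and the uniform bound $\|u^\delta\|_{L^\infty}\le M$ together with compact support of the test function provides domination. The initial trace (\ref{2.2a}) for $u$ is recovered from an $L^1$-time-equicontinuity estimate for $u^\delta$ (derived by testing the equation against approximate characteristic functions of thin time-slabs, controlled by $c_0,c_1,c_2,\bar N$) combined with $u_0^\delta\to u_0$ in $L^1_{\rm loc}$. The main obstacle will be Step three: verifying that all the quantitative constants in (i)--(ii) really are preserved (up to $o(1)$) by the mollification $g\mapsto g^\delta$, so that $L$ is independent of $\delta$, and checking that the Kruzkov doubling---which in Theorem~\ref{th:5.1} is driven critically by the right-Lipschitz bound on a \emph{single} source---still yields a manageable source-mismatch remainder when the two sources differ.
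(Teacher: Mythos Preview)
Your approach is correct and constitutes a genuinely different route from the paper's.  The paper proves Theorem~\ref{th:ex} by carrying out the vanishing viscosity method \emph{from scratch}: it introduces a four-parameter family of parabolic approximations $u^\Lambda$ (mollifying $f_i,g$ in all variables, mollifying $u_0$, adding viscosity $\varepsilon\Delta$, and cutting off in space), and its key technical contribution is to show that the $L^1$-modulus-of-continuity estimates for $u^\Lambda$ in $x$ and $t$ (inequalities \eqref{Jrux}--\eqref{Irut0}) depend only on the \emph{right}-Lipschitz bound $c_2$ of $g$, not on a full Lipschitz constant; compactness then follows from these modulus estimates and a diagonal argument.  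By contrast, you mollify $g$ only in the $u$-variable, invoke the classical Kruzkov theorem as a black box to produce $u^\delta$, and obtain compactness not from $L^1$-modulus estimates but from the $L^1$-stability theorem (Theorem~\ref{th:5.1}) extended to allow two different sources---the source-mismatch remainder $\int\!\!\int|g^\delta-g^{\delta'}|$ being controlled by the local uniform convergence $g^\delta\to g$ on $[0,T]\times B_R(0)\times[-M,M]$.

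Each approach has its merits. Yours is more modular: having already proved Theorem~\ref{th:5.1}, you reuse it as the compactness mechanism and avoid rederiving the delicate $L^1$-modulus estimates. The paper's approach is more self-contained and yields quantitative modulus-of-continuity bounds as a byproduct. Your identification of the main obstacle is accurate; two points worth noting as you fill in details: (a) the uniform $L^\infty$ bound follows cleanly by comparing $g^\delta(t,x,u)$ directly with $g(t,x,0)$ via the right-Lipschitz bound (so no $o(1)$ correction to $c_0$ is actually needed, provided $\rho_\delta$ is even), and (b) the extension of the doubling argument to two sources indeed produces exactly the remainder you wrote, but the Gr\"onwall step puts an extra factor $e^{L(t-s)}$ on the source-mismatch integral.
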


\subsection{Riemann solutions with non-selfsimilar structures}
We now introduce {\it Condition} $(\mathcal{H})$ and the main theorem on 
the Riemann solutions with non-selfsimilar structures. 

If the solution $u=u(t,x)$ is a piecewise smooth function in the neighborhood of the points on discontinuity $\mathcal{S}$, 
let $u_l$ and $u_r$ be the traces of $u=u(t,x)$ on the each side of discontinuity $\mathcal{S}$, 
and let $\vec{n}$ be the unit normal vector at point $p:=(t,x)\in\mathcal{S}$ pointing from $u_r$ to $u_l$, {\it i.e.},
\begin{align*}
u_l=\lim_{\varepsilon \rightarrow 0{+}}
u(p+\varepsilon \vec{n}),\qquad 
u_r=\lim_{\varepsilon \rightarrow 0{+}}u(p-\varepsilon \vec{n}).
\end{align*}

From Definitions $\ref{def:2.1}$--$\ref{def:2.2}$, 
we have

\begin{enumerate}
\item[(i)] {\it The Rankine-Hugoniot Condition}:
A piecewise continuous function $u=u(t,x)$ is a weak solution of \eqref{1.1} if and only if
$u$ satisfies the Rankine-Hugoniot condition along any discontinuity $\mathcal{S}${\rm :}
For any point $p=(t,x)\in \mathcal{S}$, 
\begin{equation}\label{2.3}
\vec{n}\cdot ([u],[f_1],[f_2],\cdots,[f_n])=0,
\end{equation}
where $[u]=u_l-u_r$ and $[f_i]=f_i(t,x,u_l)-f_i(t,x,u_r)$ for $i=1,2,\cdots,n$.

\smallskip
\item[(ii)]
{\it Geometric Entropy Conditions}:
A piecewise continuous function $u=u(t,x)$ is a Kruzkov entropy solution 
if and only if $u$ satisfies the geometric entropy conditions along any 
discontinuity $\mathcal{S}$ with the traces $u_l$ and $u_r$, $u_l>u_r$,
at $(t,x)\in \mathcal{S}${\rm :} 
For any constant $k\in [u_r,u_l]$, 
\begin{equation}\label{2.4}
\vec{n}\cdot (k-u_l,f_1(t,x,k)-f_1(t,x,u_l),\cdots,f_n(t,x,k)-f_n(t,x,u_l))\geq 0,
\end{equation}
or equivalently
\begin{equation}\label{2.5}
\vec{n}\cdot (k-u_r,f_1(t,x,k)-f_1(t,x,u_r),\cdots,f_n(t,x,k)-f_n(t,x,u_r))\geq 0.
\end{equation}
\end{enumerate}

\smallskip
To explicitly establish the expressions of Riemann solutions, 
we need to solve its characteristic ODEs.
For this purpose, we particularly consider the Riemann problem of the following $n$-D scalar conservation law 
with a non-Lipschitz source term:
\begin{eqnarray}
&&u_t+\sum\limits^n_{i=1} 
f_i(u)_{x_i}=g(u)\qquad\,\, 
{\rm for} \ (t,x)\in \mathbb{R}^+\times\mathbb{R}^n,\label{eq1.1}\\
&&u|_{t=0}=u_0(x)=
\begin{cases}
u_- \quad \mbox{if}\  M(x)<0,
\\[1mm]
u_+\quad\mbox{if}\  M(x)>0,
\end{cases} \label{eq1.2}
\end{eqnarray}
where $u=u(t,x)\in \mathbb{R}$, $\mathbb{R}^+:=[0,\infty)$, and the flux functions $f_i(u) \in C^2(\mathbb{R})$.
The Riemann initial data 
consist two constant states $u_-$ and $u_+$, 
which are separated by a curved initial discontinuity determined by $M(x)=0$, 
where $M(x)\in C^1(\mathbb{R}^n)$ and $M(x)=0$ is an $(n-1)$-dimensional manifold separating $\mathbb{R}^n$ into two unbounded regions.

For the source term $g(u)$, assume that  
$g(u)\in C(\mathbb{R})$ satisfies the right-Lipschitz condition 
as in $\eqref{gnRL}$ with $h(t,x,u)\equiv g(u)$ and
\begin{align}\label{conhg}
   \mathop{\overline{\lim}}\limits_{\eta \rightarrow \pm\infty}
   \frac{g(\eta)}{\eta}<\infty.
\end{align}

\begin{remark}
Condition \eqref{conhg} on the source term $g(u)$ is used to exclude the {\it ODE}-type blow up so that we can construct the Riemann solution globally in time.
\end{remark}

The characteristic ODEs of equation $\eqref{eq1.1}$ are given by
\begin{equation}\label{3.1}
 \frac{{\rm d}x_i}{{\rm d}t}=f'_i(\bar{u}(t,s)),\qquad x_i(0)=x^0_i,
\end{equation}
for $i=1,\,2,\,\cdots,\,n$, and $\bar{u}=\bar{u}(t,s)$ satisfies
\begin{equation}\label{3.3}
 \frac{{\rm d}\bar{u}}{{\rm d}t}=g(\bar{u}),\qquad \bar{u}(0,s)=s,
\end{equation}
where $x^0:=(x^0_1,x^0_2,\cdots,x^0_n)\in \mathbb{R}^n$ and $s\in\mathbb{R}$.

Denote the point set $A$ by
\begin{align*}
   A:=\{u\in\mathbb{R}\,:\, g(u)=0\}.
\end{align*}
Since $g(u) \in C(\mathbb{R})$, the set $A$ is closed. 
Then the set $\partial A$, as the boundary of $A$, is also closed; 
and $\mathbb{R}\setminus A$ and the set ${\rm int} A$, as the interior of $A$, are both open. 
Therefore, $\mathbb{R}$ can be expressed by the following countable disjoint union:
\begin{equation}\label{3.7}
\mathbb{R}=(\mathbb{R}\setminus A)\cup {\rm int} A \cup \partial A=:
(\cup_iI_i )\cup (\cup_j J_j) \cup \partial A,
\end{equation}
where $g(a_i)=0$ if $a_i$ is finite, and $g(b_i)=0$ if $b_i$ is finite; 
and the open intervals $I_i:=(a_i,b_i)$ and $J_j:=(c_j,d_j)$ satisfy 
\begin{equation}\label{3.7a}
g(u) \neq 0 \quad {\rm for}\ u\in (a_i,b_i),\qquad\,\, g(u) = 0 \quad {\rm for}\ u\in (c_j,d_j).
\end{equation}

Now, we give the lemma on the well-posedness of the characteristic ODE $\eqref{3.3}$.
\begin{lemma}\label{lem:3.1}
Assume that the source term of equation $\eqref{eq1.1}$ satisfies $g(u)\in C(\mathbb{R})$, the right-Lipschitz condition 
in $\eqref{gnRL}$ with $h(t,x,u)\equiv g(u)$, and $\eqref{conhg}$ holds.
Then there exists a unique continuous function $\bar{u}=\bar{u}(t,s)$ on $\mathbb{R}^+ \times \mathbb{R}$
that satisfies the following properties{\rm :}

\begin{itemize}
\item[(i)] For any fixed $s \in \mathbb{R}$, $\bar{u}=\bar{u}(\cdot,s)$ is a global solution of {\rm ODE} $\eqref{3.3}$ on $\mathbb{R}^+${\rm ;}
\item[(ii)] For any fixed $t \in \mathbb{R}^+$, $\bar{u}(t,\cdot)$ is a non-decreasing Lipschitz continuous function{\rm ;} 
\item[(iii)] $\bar{u}(t,s)$ is $C^1$ 
on $\mathbb{R}^+\times(\mathbb{R}\setminus(\partial A{-}\mathring {\partial}A))$ and satisfies
\begin{subnumcases}{\label{usts}\bar{u}_s(t,s)=}
\frac{g(\bar{u}(t,s))}{g(s)} &\quad {\rm if} $s \in \mathbb{R}\setminus A,\label{3.8}$\\
1 &\quad {\rm if} $ s \in {\rm int} A,\label{3.9}$\\
e^{g'(s)t} &\quad {\rm if} $s \in \mathring {\partial}A \  {\rm or}\ g'(s)={-}\infty,\label{3.10}$
\end{subnumcases} 
where
$\mathring {\partial}A:=\big \{s\in \partial A\, :\, g'(s)\ {\rm exists} \big \}$.
\end{itemize}
\end{lemma}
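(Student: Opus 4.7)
My plan is to prove (i), (ii), (iii) in order, using the right-Lipschitz bound as the central tool through Gr\"onwall-type estimates on squared differences of trajectories, and then deducing the derivative formulas by separating variables in the ODE on regions where $g\ne 0$ and by linearizing at zeros of $g$.

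For (i), I would first invoke Peano's theorem to obtain local existence on some $[0,\tau]$, using only the continuity of $g$. Forward uniqueness then comes directly from the right-Lipschitz hypothesis: for two solutions $u_1, u_2$ with $u_1(0) = u_2(0) = s$,
\[
\frac{d}{dt}(u_1 - u_2)^2 = 2(u_1 - u_2)\bigl(g(u_1) - g(u_2)\bigr) \le 2L(u_1 - u_2)^2,
\]
and Gr\"onwall forces $u_1 \equiv u_2$ on the common interval of existence. Global existence on $\mathbb{R}^+$ is then secured by the sublinear growth assumption \eqref{conhg}, which produces an a priori bound ruling out blow-up in finite time.

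For (ii), both properties follow by comparing two trajectories $\bar u(\cdot, s_1)$ and $\bar u(\cdot, s_2)$ with $s_1 < s_2$. Monotonicity is immediate from forward uniqueness (trajectories cannot cross), so $W(t) := \bar u(t, s_2) - \bar u(t, s_1) \ge 0$ for all $t \ge 0$. The right-Lipschitz bound then yields
\[
W'(t) = g(\bar u(t, s_2)) - g(\bar u(t, s_1)) \le L\, W(t),
\]
and Gr\"onwall delivers the Lipschitz estimate $W(t) \le e^{Lt}(s_2 - s_1)$.

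For (iii), I would split according to the decomposition \eqref{3.7}. On $\mathbb{R}\setminus A$, each $s\in I_i=(a_i,b_i)$ sits in an interval on which $g$ has constant sign, so the trajectory stays trapped in $I_i$ and separation of variables gives the implicit formula $\int_s^{\bar u(t,s)} dv/g(v) = t$; differentiating in $s$ yields \eqref{3.8}. For $s\in\mathrm{int}\,A$, $g$ vanishes in a whole neighborhood, so $\bar u(t,s')=s'$ is the unique solution for every nearby $s'$ and \eqref{3.9} is immediate. The serious case is $s\in\mathring\partial A$ (or $g'(s)=-\infty$): here $\bar u(t,s)=s$ is forced by uniqueness, and for $s'$ in a one-sided punctured neighborhood lying in some $I_i$ I would insert $g(u)=g'(s)(u-s)+o(u-s)$ into the implicit formula and change variables $v=s+(s'-s)w$ to get
\[
t = \int_1^{(\bar u(t, s') - s)/(s' - s)} \frac{dw}{g'(s)\, w + o(1)},
\]
whose limit as $s'\to s$ identifies the difference quotient with $e^{g'(s)t}$ (using the convention $e^{-\infty\cdot t}=0$). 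The main obstacle is precisely this last limit: I expect to need to control the $o(1)$ uniformly on compact time windows, to treat the case $g'(s)=-\infty$ separately via the comparison $\bar u(t,s')-s\le (s'-s)e^{-\lambda t}$ coming from the one-sided inequality $g(u)\le -\lambda(u-s)$ valid near $s$ for any $\lambda>0$, and to handle both one-sided approaches to $s\in\partial A$ (noting $s$ need not be a two-sided limit point of $\mathbb{R}\setminus A$). Assembling continuity of the three derivative formulas across the three regions then promotes $\bar u$ to a $C^1$ function on $\mathbb{R}^+\times\bigl(\mathbb{R}\setminus(\partial A\setminus \mathring\partial A)\bigr)$.
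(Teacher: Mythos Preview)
Your treatment of (i) and (ii) is correct and arguably cleaner than the paper's: the paper builds $\bar u$ case-by-case via the implicit relation $G(\bar u,t,s)=\int_s^{\bar u}d\eta/g(\eta)-t=0$ and only afterwards reads off monotonicity and continuity, whereas you obtain uniqueness, monotonicity, and the Lipschitz bound in one stroke from the Gr\"onwall estimate on $(u_1-u_2)^2$.

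The genuine gap is in (iii), in your claim that for $s\in I_i=(a_i,b_i)$ ``the trajectory stays trapped in $I_i$''. This is false in precisely the non-Lipschitz regime the lemma is meant to cover. Forward uniqueness prevents trajectories from \emph{crossing}, but not from \emph{merging}: if, say, $g>0$ on $(a_i,b_i)$ with $b_i$ finite, right-Lipschitzness imposes no lower bound on $g$ near $b_i$, so $B_i(s):=\int_s^{b_i}d\eta/g(\eta)$ may be finite (the paper's own example $g(u)=-u^{1/3}$ exhibits this on $(0,\infty)$ toward the endpoint $0$). Then $\bar u(\cdot,s)$ reaches $b_i$ at the finite time $t_*(s)=B_i(s)$ and, by forward uniqueness from $t_*$, equals $b_i$ thereafter. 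Your separation-of-variables identity $\int_s^{\bar u(t,s)}dv/g(v)=t$ is therefore valid only for $t<t_*(s)$; you must still check that $\bar u$ is $C^1$ across the hypersurface $\{t=t_*(s)\}$ and that \eqref{3.8} (which there reads $g(b_i)/g(s)=0$) agrees from both sides. The paper carries this out explicitly, and this finite-time absorption into $\partial A$ is exactly the mechanism behind Remarks~\ref{rem:6.1}--\ref{rem:6.3} on basic waves disappearing.

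A smaller point on \eqref{3.10}: pushing the expansion $g(u)=g'(s)(u-s)+o(u-s)$ through the implicit integral is workable but delicate, partly because the trajectory from $s'$ may itself be absorbed into $\partial A$ before time $t$. The paper's device is more direct: since $g(s_0)=0$, the difference $v(t)=\bar u(t,s_0+\Delta s)-s_0$ satisfies $v'=(g(\bar u)/v)\,v$ wherever $v\ne 0$, giving
\[
\frac{\bar u(t,s_0+\Delta s)-s_0}{\Delta s}
=\exp\Big\{\int_0^{t}\frac{g(\bar u(\tau,s_0+\Delta s))-g(s_0)}{\bar u(\tau,s_0+\Delta s)-s_0}\,d\tau\Big\},
\]
whose integrand is bounded above by the right-Lipschitz constant $L$ and converges pointwise to $g'(s_0)$; dominated (or monotone, when $g'(s_0)=-\infty$) convergence then yields $e^{g'(s_0)t}$ without any change of variables in the implicit formula.
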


For any $s\in\mathbb{R}$, by taking $\bar{u}=\bar{u}(t,s)$ into $\eqref{3.1}$ 
and integrating it from $0$ to $t$, 
we obtain the unique characteristic $x(t)=(x_1(t),x_2(t),\cdots,x_n(t))$ 
emitting from $x^0\in \mathbb{R}^n$:
\begin{equation}\label{4.1}
x_i(t)=x^0_i+\int^t_0f'_i(\bar{u}(\tau,s))\, {\rm d}\tau=:x_i^0+\chi_i(t,s)
\qquad\,\, {\rm for}\,\,t\in \mathbb{R}^+.
\end{equation}

Given $u_-,u_+\in \mathbb{R}$, we denote
\begin{equation}\label{4.3}
[\chi_i](t):=\int^t_0 \frac {[f_i]}{[u]}_{\pm}(\tau)\, {\rm d}\tau
\qquad\,\, {\rm for}\ t \in \mathbb{R}^+,
\end{equation}
where $\frac {[f_i]}{[u]}_{\pm}(t)$ is defined by
\begin{equation}\label{4.4}
\frac {[f_i]}{[u]}_{\pm}(t):=
\begin{cases}
\dfrac{f_i(\bar{u}(t,u_-))-f_i(\bar{u}(t,u_+))}
{\bar{u}(t,u_-)-\bar{u}(t,u_+)}\quad &{\rm if}\ \bar{u}(t,u_-) \neq \bar{u}(t,u_+) ,\\[2mm]
f'_i(\bar{u}(t,u_+))\quad &{\rm if}\ \bar{u}(t,u_-) = \bar{u}(t,u_+).
\end{cases}
\end{equation}

We now present the multi-dimensional version of the convex condition 
on the flux functions and the initial discontinuity, namely,
{\it Condition} $(\mathcal{H})$, 
which is similar to the convexity of the flux function in the one-dimensional case.

\begin{definition}[Condition $(\mathcal{H})$]
The Riemann problem $\eqref{eq1.1}$--$\eqref{eq1.2}$ satisfies 
{\it Condition} $(\mathcal{H})$ 
if, for any $x\in\mathbb{R}^n$ with $M(x)=0$, 
\begin{equation}\label{convexh}
H(x,u):=M_{x_i}(x)f''_i(u)>0 \qquad\,\,\, {\rm for}\ u\in (a,b),
\end{equation}
where the open interval $(a,b)$ is determined by
\begin{equation}\label{2.7}
a \leq \mathop{\inf}\limits_{t\geq 0}\left\{\bar{u}(t,u_+), \bar{u}(t,u_-)\right\},
\qquad b \geq \mathop{\sup}\limits_{t\geq 0}\left\{\bar{u}(t,u_+), \bar{u}(t,u_-)\right\}.
\end{equation}
\end{definition}

Now we can state the main theorem on the global Riemann solutions with non-selfsimilar structures 
for the Riemann problem $\eqref{eq1.1}$--$\eqref{eq1.2}$ with non-Lipschitz source terms.

\begin{theorem}\label{th:4.4}
Given $u_-,u_+\in \mathbb{R}$. 
Suppose that the source term $g(u)\in C(\mathbb{R})$ satisfies 
the right-Lipschitz condition as in $\eqref{gnRL}$ with $h(t,x,u)\equiv g(u)$ 
and condition $\eqref{conhg}$.
Suppose that the Riemann problem $\eqref{eq1.1}$--$\eqref{eq1.2}$ satisfies 
{\it Condition} $(\mathcal{H})$. 
Then
\begin{itemize}
\item[(i)] If $u_->u_+$, the solution of the Riemann problem $\eqref{eq1.1}$--$\eqref{eq1.2}$ 
is a shock wave with non-selfsimilar structure{\rm :}
\begin{equation}\label{4.13}
\qquad u(t,x)=
\begin{cases}
 \bar{u}(t,u_-) &{\rm if}\ M(x_1-[\chi_1](t),x_2-[\chi_2](t),\cdots,x_n-[\chi_n](t))<0,
\\[1mm]
 \bar{u}(t,u_+) &{\rm if}\ M(x_1-[\chi_1](t),x_2-[\chi_2](t),\cdots,x_n-[\chi_n](t))>0,
\end{cases}
\end{equation}
and the shock surface $S(t,x)=0$ is given by
\begin{equation}\label{4.14}
M(x_1-[\chi_1](t),x_2-[\chi_2](t),\cdots,x_n-[\chi_n](t))=0,
\end{equation}
where $[\chi_i](t)$ is defined by $\eqref{4.3}${\rm ;}
 
\item[(ii)] If $u_-<u_+$, the solution of the Riemann problem $\eqref{eq1.1}$--$\eqref{eq1.2}$ 
is a rarefaction wave with non-selfsimilar structure{\rm :}
\begin{equation}\label{4.17}
\qquad u(t,x)=
\begin{cases}
\bar{u}(t,u_-) &{\rm if}\ M(x_1-\chi_1(t,u_-),\cdots,x_n-\chi_n(t,u_-))<0,\,\, t\geq 0;\\
\bar{u}(t,c(t,x)) &{\rm if}\ M(x_1-\chi_1(t,u_-),\cdots,x_n-\chi_n(t,u_-))\geq 0,\\
&\quad M(x_1-\chi_1(t,u_+),\cdots,x_n-\chi_n(t,u_+))\leq 0,\,\, t>0;\\
\bar{u}(t,u_+) &{\rm if}\ M(x_1-\chi_1(t,u_+),\cdots,x_n-\chi_n(t,u_+))>0,\,\, t\geq 0,
\end{cases}
\end{equation}
where $c(t,x)$ is a function implicitly but uniquely determined by 
\begin{equation}\label{4.18}
F(t,x,c):=M(x_1-\chi_1(t,c),x_2-\chi_2(t,c),\cdots,x_n-\chi_n(t,c))=0.
\end{equation}
\end{itemize}
\end{theorem}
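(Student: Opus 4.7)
The plan is to construct explicit candidate solutions in both cases and verify that they satisfy the Rankine--Hugoniot condition \eqref{2.3} together with the geometric entropy conditions \eqref{2.4}--\eqref{2.5}, thereby qualifying as Kruzkov entropy solutions. Lemma \ref{lem:3.1} supplies the evolution map $\bar u(t,s)$ that propagates the initial value $s$ along characteristics, while \eqref{4.1} traces out the characteristic curves. Since $\bar u(t,s)$ satisfies \eqref{3.3}, setting $u(t,x) = \bar u(t, u_{\pm})$ on either side of the interface automatically solves \eqref{eq1.1} because $u$ is $x$-independent there and $\partial_t \bar u = g(\bar u)$. At $t=0$, $\bar u(0, u_{\pm}) = u_{\pm}$ and $[\chi_i](0) = \chi_i(0, u_{\pm}) = 0$, so the initial datum \eqref{eq1.2} is recovered in both \eqref{4.13} and \eqref{4.17}. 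Everything therefore reduces to analyzing the behaviour across the moving interface in part (i) and through the fan in part (ii).

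\textbf{Shock case.} Writing $y := (x_1 - [\chi_1](t), \dots, x_n - [\chi_n](t))$, the candidate shock surface \eqref{4.14} is $M(y) = 0$, whose normal in $(t,x)$ is proportional to $\bigl(-M_{x_i}(y)[\chi_i]'(t),\, M_{x_1}(y), \dots, M_{x_n}(y)\bigr)$. Substituting into \eqref{2.3} reduces the Rankine--Hugoniot identity to $\sum_i M_{x_i}(y)\bigl([f_i](t) - [\chi_i]'(t)[u](t)\bigr) = 0$, which follows at once from \eqref{4.3}--\eqref{4.4} regardless of whether $\bar u(t, u_-) = \bar u(t, u_+)$. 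For the entropy inequality I would set $G(k) := \sum_i M_{x_i}(y)\, f_i(k)$, use Condition $(\mathcal{H})$ and \eqref{2.7} to obtain the strict convexity of $G$ on an interval containing both $\bar u(t, u_{\pm})$, and then apply the standard secant-line argument to conclude that \eqref{2.4}--\eqref{2.5} holds on $[\bar u(t, u_+), \bar u(t, u_-)]$, the ordering being supplied by the monotonicity of $\bar u(t,\cdot)$ from Lemma \ref{lem:3.1}(ii).

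\textbf{Rarefaction case.} The decisive step is the unique solvability of \eqref{4.18} for $c$. Differentiating $F$ in $c$ and using \eqref{4.1} give
\begin{equation*}
F_c(t,x,c) = -\sum_{i=1}^n M_{x_i}\bigl(x - \chi(t,c)\bigr) \int_0^t f''_i(\bar u(\tau, c))\, \bar u_s(\tau, c)\, {\rm d}\tau .
\end{equation*}
Since $\bar u_s \geq 0$ by Lemma \ref{lem:3.1}(ii) and $\sum_i M_{x_i} f''_i > 0$ on $(a,b)$ by Condition $(\mathcal{H})$, the integrand is non-negative and strictly positive on any subinterval of $(u_-, u_+)$ on which $\bar u_s > 0$. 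Hence $F_c < 0$ throughout the interior of the fan, and the implicit function theorem yields a unique continuous $c(t,x)$ there. Differentiating $F(t,x,c(t,x)) = 0$ together with \eqref{3.1} and \eqref{3.3} then shows that $u(t,x) := \bar u(t, c(t,x))$ solves \eqref{eq1.1} classically wherever $c$ is $C^1$, while continuity across the boundary surfaces $M(x - \chi(t, u_{\pm})) = 0$ is automatic because $c(t,x) \to u_{\pm}$ there by uniqueness.

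\textbf{Main obstacle.} The principal difficulty lies in the varying regularity of $\bar u(\cdot, s)$ encoded in \eqref{usts}. Whenever $[u_-, u_+]$ meets a transition between $\mathbb{R} \setminus A$, $\mathrm{int}\, A$, and $\partial A \setminus \mathring{\partial} A$, the derivative $\bar u_s(t,s)$ either vanishes identically on a sub-interval or loses continuity, which in turn propagates into $c(t,x)$ as the weak discontinuities announced in Remarks \ref{rem:6.1}--\ref{rem:6.3}, and can even force $\bar u(t, u_-) = \bar u(t, u_+)$ to occur in finite time, collapsing the basic wave. Handling this rigorously will require partitioning $(u_-, u_+)$ according to the decomposition \eqref{3.7}, applying the implicit function theorem piecewise on each $I_i$ and $J_j$, and then gluing the pieces together via the continuity in $s$ guaranteed by Lemma \ref{lem:3.1}; the sublinear growth hypothesis \eqref{conhg} is precisely what prevents ODE-type blow-up and keeps the construction global in $t$.
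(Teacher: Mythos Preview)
Your proposal is correct and follows essentially the same route as the paper: verify Rankine--Hugoniot and the geometric entropy inequalities for the explicit shock candidate via the convexity of $k\mapsto M_{x_i}(y)f_i(k)$ supplied by Condition~$(\mathcal{H})$, and for the rarefaction define $c(t,x)$ implicitly via $F=0$, compute $F_c$, and check the PDE by the chain rule.

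The one methodological difference worth noting is in the uniqueness of the rarefaction parameter $c$. You argue via $F_c<0$ and the implicit function theorem, which requires $\bar u_s(\tau,c)$ to exist; this forces the piecewise-and-glue strategy you outline for $c\in\partial A\setminus\mathring{\partial}A$. The paper instead proves a finite-difference monotonicity statement (their claim~\eqref{Qs0}, established in Lemma~\ref{lem:4.1}): if $Q(s_0)=0$ then $(s-s_0)Q(s)<0$ for nearby $s$, where the proof expands $M$ about $x^0(s_0)$ and uses $\bar u(\tau,s)-\bar u(\tau,s_0)>0$ on a short initial time interval (from $\bar u(0,s)=s$ and continuity) together with $\bar u(\tau,\cdot)$ nondecreasing for all $\tau$. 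This argument never differentiates $\bar u$ in $s$, so it delivers global uniqueness of the root $c\in[u_-,u_+]$ in one stroke, including at the singular values, and relegates the IFT purely to the regularity of $c(t,x)$ on $\{c\notin\partial A\setminus\mathring{\partial}A\}$. Your partitioning plan would also work, but the finite-difference route is cleaner. Note also that Condition~$(\mathcal{H})$ fixes the sign of $M_{x_i}f''_i$ only on $\{M=0\}$, so your inequality $F_c<0$ is strictly justified only at points where $F=0$; that is exactly what both arguments need, but your phrase ``throughout the interior of the fan'' should be read in that sense.
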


\begin{remark}
If $H(x,u)<0$ on $M(x)=0$ for any $u\in(a,b)$, 
by letting $\tilde{M}(x)=-M(x)$, 
then for any $x\in\mathbb{R}^n$ with $\tilde{M}(x)=0$, 
\begin{equation*}
\tilde{H}(x,u):=\tilde{M}_{x_i}(x)f''_i(u)>0 \qquad\,\,\, {\rm for}\ u\in (a,b),
\end{equation*}
where the open interval $(a,b)$ is given by $\eqref{2.7}$.
Thus, {\rm Theorem} $\ref{th:4.4}$ holds for the Riemann problem of equation $\eqref{eq1.1}$ 
with the initial data given by
\begin{align*}
u_0(x)=\begin{cases}
    u_+\quad &{\rm if}\  \tilde{M}(x)<0,\\[1mm]
u_-\quad &{\rm if}\  \tilde{M}(x)>0.
\end{cases}
\end{align*}
\end{remark}

\medskip
\section{Uniqueness and Stability of 
Entropy Solutions: Proof of Theorems $\ref{th:5.1}$--$\ref{th:5.2}$  }
In this section, we prove Theorems $\ref{th:5.1}$--$\ref{th:5.2}$ for the uniqueness and stability of 
entropy solutions. 

Let  $\delta(\sigma)\in C^{\infty}(\mathbb{R})$ satisfy 
\begin{equation}\label{5.1}
\delta(\sigma)\geq 0,\qquad\delta(\sigma)\equiv 0\quad{\rm for}\,\, |\sigma| \geq 1,\qquad\int^{\infty}_{{-}\infty} \delta(\sigma)\, {\rm d}\sigma =1.
\end{equation}
For any $h>0$, we let
\begin{equation}\label{5.2}
\delta_h(\sigma)=h^{-1}\delta(h^{-1}\sigma).
\end{equation}
Then $\delta_h(\sigma)\in C^{\infty}(\mathbb{R})$ satisfies
\begin{equation}\label{5.3}
\delta_h(\sigma)\geq 0,
\qquad\delta_h(\sigma)\equiv 0\quad{\rm for}\,\, |\sigma| \geq h, 
\qquad|\delta_h(\sigma)|\lesssim h^{-1},
\qquad \int^{\infty}_{{-}\infty} \delta_h(\sigma)\, {\rm d}\sigma =1.
\end{equation}

Denote $\lambda_h:=\lambda_h(t,x,\tau,y)$ by
\begin{equation}\label{5.4}
\lambda_h=\lambda_h(t,x,\tau,y):=\delta_h(\frac{t-\tau}{2})\, \prod^n_{i=1}\delta_h(\frac{x_i-y_i}{2}).
\end{equation}
For any fixed $(t,x)$ with $t<T$, 
if $h<\min \big\{\frac{t}{2},\frac{T-t}{2}\big\}$, then
\begin{equation}\label{5.5}
\iint_{\Pi_{_T}}\frac{1}{2^{n+1}}\lambda_h(t,x,\tau,y)\, {\rm d}\tau {\rm d}y=\int_{-h}^h\int_{\mathbb{R}^n}\delta_h(\beta)\,
\textstyle\prod^n_{i=1}\delta_h(\xi_i)\, {\rm d}\beta {\rm d}\xi=1,
\end{equation}
where 
$\beta=\frac{t-\tau}{2}$ and $\xi=(\xi_1,\cdots,\xi_n)$ with $\xi_i=\frac{x_i-y_i}{2}$.

\medskip
Now we are ready to prove Theorem \ref{th:5.1}.
\begin{proof}[Proof of {\rm Theorem} $\ref{th:5.1}$]
The proof is divided into four steps.

\smallskip
\noindent
{\bf 1.} Let $w\in C_{c}^{\infty}(\Pi_{_T} \times \Pi_{_T})$ with $w=w(t,x,\tau,y)\geq 0$. 
For each fixed $(\tau,y)\in \Pi_{_T}$, 
by taking $k=v(\tau,y)$ and $\phi(t,x)=w(t,x,\tau,y)$ in $\eqref{2.2}$, 
and integrating it with respect to $(\tau,y) $ over $\Pi_{_T}$, we obtain
\begin{align}\label{5.10}
\iiiint_{\Pi_{_T} \times \Pi_{_T}} {\rm sgn}\big(u(t,x)-v(\tau,y)\big)
\Big\{&\big(u(t,x)-v(\tau,y)\big)w_t \nonumber\\
&+\big(f_i(t,x,u(t,x))-f_i(t,x,v(\tau,y))\big)w_{x_i}\nonumber\\[1mm]
&
-f_{ix_i}(t,x,v(\tau,y))w+g(t,x,u(t,x))w\Big\}\,{\rm d}t{\rm d}x{\rm d}\tau{\rm d}y\ge 0.
\end{align}	
Similarly, for each fixed $(t,x)\in \Pi_{_T}$, 
by taking $k=u(t,x)$ and $\phi(\tau,y)=w(t,x,\tau,y)$ in $\eqref{2.2}$, 
and integrating it with respect to $(t,x)$ over $\Pi_{_T}$, we obtain
\begin{align}\label{5.11}
\iiiint_{\Pi_{_T} \times \Pi_{_T}} 
{\rm sgn}\big(v(\tau,y)-u(t,x)\big)
 \Big\{&\big(v(\tau,y)-u(t,x)\big)w_{\tau} \nonumber\\
&+\big(f_i(\tau,y,v(\tau,y))-f_i(\tau,y,u(t,x))\big)w_{y_i}\nonumber\\[1mm] 
&-f_{iy_i}(\tau,y,u(t,x))w+g(\tau,y,v(\tau,y))w\Big\}\,{\rm d}\tau{\rm d}y {\rm d}t{\rm d}x\ge 0.
\end{align}
Adding $\eqref{5.10}$ and $\eqref{5.11}$ together, we conclude
\begin{align}\label{5.12}
0\leq\iiiint_{\Pi_{_T} \times \Pi_{_T}} {\rm sgn}&\big(u(t,x)-v(\tau,y)\big)\nonumber\\
\times \Big\{&\big(u(t,x)-v(\tau,y)\big)(w_t+w_{\tau}) \nonumber\\
&+\big(f_i(t,x,u(t,x))-f_i(\tau,y,v(\tau,y))\big)(w_{x_i}+w_{y_i}) \nonumber\\
&+\big(\big(f_i(\tau,y,v(\tau,y))-f_i(t,x,v(\tau,y))\big)w_{x_i}-f_{ix_i}(t,x,v(\tau,y))w\nonumber\\
&\quad\,\,\,\, +\big(f_i(\tau,y,u(t,x))-f_i(t,x,u(t,x))\big)w_{y_i}+f_{iy_i}(\tau,y,u(t,x))w\big)\nonumber\\
&+\big(g(t,x,u(t,x))-g(\tau,y,v(\tau,y))\big)w\Big\}\,{\rm d}\tau{\rm d}y{\rm d}t{\rm d}x\nonumber\\
=:\iiiint_{\Pi_{_T} \times \Pi_{_T}}(I_1&+I_2+I_3+I_4)\,
{\rm d}\tau{\rm d}y{\rm d}t{\rm d}x.
\end{align}

\smallskip
\noindent
{\bf 2.} Let $\phi \in C_{\rm c}^{\infty}(\Pi_{_T} )$ with $\phi=\phi(t,x)\geq 0$ and 
\begin{align*}
\phi(t,x)\equiv 0 \qquad {\rm if}\ (t,x)\notin[\rho,T-2\rho]\times B_{r-2\rho}(0),
\end{align*}
where $2\rho \leq \min\{T,r\}$.
For $h\in(0,\rho]$, choose the test function $w$ in $\eqref{5.12}$ given by 
\begin{equation}\label{5.13}
 w^h:=\phi(\frac{t+\tau}{2},\frac{x+y}{2})\,\delta_h(\frac{t-\tau}{2}) 
 \prod^n_{i=1}\delta_h(\frac{x_i-y_i}{2})
 =\phi(\frac{t+\tau}{2},\frac{x+y}{2})\,\lambda_h(t,x,\tau,y),
\end{equation}
where $\delta_h$ and $\lambda_h$ are given by $\eqref{5.2}$ and $\eqref{5.4}$, respectively. 
Then it is direct to see that
\begin{equation*}
 w^h_t+w^h_{\tau}=\phi_t(\frac{t+\tau}{2},\frac{x+y}{2})\, \lambda_h, \qquad 
 w^h_{x_i}+w^h_{y_i}=\phi_{x_i}(\frac{t+\tau}{2},\frac{x+y}{2})\, \lambda_h.
\end{equation*}

Let $G_k:=G_k(t,x,\tau,y,u(t,x),v(\tau,y))$, $k=1,2,4$, be determined by
\begin{equation}\label{gn1}
I_k=:G_k(t,x,\tau,y,u(t,x),v(\tau,y))\lambda_h(t,x,\tau,y)=G_k\lambda_h.
\end{equation}
Since $f_i(t,x,u)\in C^1([0,T]\times\mathbb{R}^n\times\mathbb{R})$, 
by the choice of $w^h$ in $\eqref{5.13}$, it follows from $\eqref{5.12}$ and $\eqref{gn1}$ that 
$G_1$ and $G_2$ are both Lipschitz in all the variables. 
Furthermore, since $f_{it}(t,x,u)$ and $f_{ix_j}(t,x,u)$ satisfy the Lipschitz condition in $u$ 
on $[-M,M]$ as in $\eqref{gnL}$, 
and $I_3$ is independent of the source term $g(t,x,u)$, 
by the same arguments in \S 3 of Kruzkov \cite{[Ksn2]}, we obtain
{\small 
\begin{align}\label{gn2}
&\lim_{h\rightarrow 0}\!\iiiint_{\Pi_{_T} \times \Pi_{_T}}\!\!\!(I_1+I_2+I_3)\,{\rm d}\tau{\rm d}y{\rm d}t{\rm d}x\nonumber\\
&=2^{n+1}\!\!\iint_{\Pi_{_T}}\!\!
{\rm sgn}\big(u(t,x){-}v(t,x)\big)\Big\{\big(u(t,x){-}v(t,x)\big)\phi_t
+\big(f_i(t,x,u(t,x)){-}f_i(t,x,v(t,x))\big)\phi_{x_i}\Big\}
\,{\rm d}t{\rm d}x.
\end{align}
}
For $I_4$, it follows from $\eqref{5.12}$ and $\eqref{gn1}$ that
\begin{align}\label{gn3}
G_4&={\rm sgn}\big(u(t,x)-v(\tau,y)\big)\big(g(t,x,u(t,x))-g(\tau,y,v(\tau,y))\big)\,\phi(\frac{t+\tau}{2},\frac{x+y}{2})\nonumber\\
&={\rm sgn}\big(u(t,x)-v(\tau,y)\big)\big(g(t,y,v(\tau,y))-g(\tau,y,v(\tau,y))\big)\,\phi(\frac{t+\tau}{2},\frac{x+y}{2})\nonumber\\
&\quad\, +{\rm sgn}\big(u(t,x)-v(\tau,y)\big)\big(g(t,x,v(\tau,y))-g(t,y,v(\tau,y))\big)\,\phi(\frac{t+\tau}{2},\frac{x+y}{2})\nonumber\\
&\quad\, +{\rm sgn}\big(u(t,x)-v(\tau,y)\big)\big(g(t,x,u(t,x))-g(t,x,v(\tau,y))\big)\,\phi(\frac{t+\tau}{2},\frac{x+y}{2})\nonumber\\
&=:G_{4,1}+G_{4,2}+G_{4,3}.
\end{align}
Since the solutions $u(t,x)$ and $v(t,x)$ are both bounded by $M>0$, 
and the source term $g(t,x,u)$ 
is continuous in $(t,x)\in[0,T]\times\mathbb{R}^n$, 
we have 
\begin{align}\label{gn4}
\lim_{h\rightarrow 0}\iiiint_{\Pi_{_T} \times \Pi_{_T}}(G_{4,1}+G_{4.2})\lambda_h\,{\rm d}\tau{\rm d}y{\rm d}t{\rm d}x=0.
\end{align}
For $G_{4,3}$, since $\phi\geq 0$ and $\lambda_h\geq 0$, it follows from 
$\eqref{gnRL}$, $\eqref{5.5}$, and $\eqref{gn3}$--$\eqref{gn4}$ that
\begin{align}\label{gn5}
\lim_{h\rightarrow 0}\iiiint_{\Pi_{_T} \times \Pi_{_T}}I_4\,{\rm d}\tau{\rm d}y{\rm d}t{\rm d}x
&=\lim_{h\rightarrow 0}\iiiint_{\Pi_{_T} \times \Pi_{_T}}G_{4,3}\,\lambda_h\,{\rm d}\tau{\rm d}y{\rm d}t{\rm d}x\nonumber\\
&\leq\lim_{h\rightarrow 0}\iiiint_{\Pi_{_T} \times \Pi_{_T}}
L|u(t,x)-v(\tau,y)|\,\phi\,\lambda_h\,{\rm d}\tau{\rm d}y{\rm d}t{\rm d}x\nonumber\\
&=2^{n+1}\iint_{\Pi_{_T}}L|u(t,x)-v(t,x)|\,\phi\,{\rm d}t{\rm d}x.
\end{align}

By combining $\eqref{gn2}$ with $\eqref{gn5}$, it follows from $\eqref{5.12}$ that
\begin{align}\label{5.18}
\iint_{\Pi_{_T}} {\rm sgn}\big(u(t,x)-v(t,x)\big)
\big\{&\big(u(t,x)-v(t,x)\big)\phi_t 
+\big(f_i(t,x,u(t,x))-f_i(t,x,v(t,x))\big)\phi_{x_i} 
\nonumber\\[-0.5mm]
&+L\big(u(t,x)-v(t,x)\big)\phi\big\}\, {\rm d}t{\rm d}x \geq 0.
\end{align}

\smallskip
\noindent
{\bf 3.} 
We now employ inequality $\eqref{5.18}$ to establish the following inequality:
\begin{equation}\label{5.20}
\mu(\tau)\leq \mu(\rho)+L\int_{\rho}^{\tau}\mu(t)\, {\rm d}t \qquad{\rm a.e.}\,\,\, 0<\rho<\tau<T_0,
\end{equation}
where $T_0=\min\{T,RN^{-1}\}$ as in $\eqref{5.7}$, and $\mu(t)$ is defined by
\begin{equation}\label{5.19}
\mu(t):=\int_{S_t}|u(t,x)-v(t,x)|\, {\rm d}x\qquad {\rm for}\,\, t\geq 0.
\end{equation}

Denote $\alpha_h(\sigma):=\int_{{-}\infty}^{\sigma}\delta_h(\beta)\,{\rm d}\beta$ so that
$\alpha'_h(\sigma)=\delta_h(\sigma)\geq 0$. 	
Choose $\rho,\tau \in (0,T_0)$ with $\rho<\tau$
and $\phi=\phi(t,x)$ in $\eqref{5.18}$ given by
$$\phi(t,x)=\big(\alpha_h(t-\rho)-\alpha_h(t-\tau)\big)
\,\xi^{\varepsilon}(t,x)\qquad\, {\rm for}\,\, h<\min\{\rho,T_0-\tau\},$$
where $\xi^{\varepsilon}:=\xi^{\varepsilon}(t,x)$ is given by
$$\xi^{\varepsilon}(t,x):= 1-\alpha_{\varepsilon}(|x|+Nt-R+\varepsilon)\qquad {\rm for}\,\,\varepsilon>0.$$
Then it is direct to see that $\phi(t,x) \in C_{\rm c}^{\infty}(\Pi_{_{T_0}})$ with $\phi(t,x)\geq 0$. 
Notice that, for any $\varepsilon>0$, 
$\xi^{\varepsilon}(t,x)$ $\equiv 0$ outside  cone $\mathcal{K}$, 
and 
$$ 0= \xi^{\varepsilon}_t+N|\nabla_x\xi^{\varepsilon}|\geq \xi^{\varepsilon}_t+
\frac{f_i(t,x,u)-f_i(t,x,v)}{u-v}\,\xi^{\varepsilon}_{x_i}
\qquad {\rm for\ any}\ (t,x)\in \mathcal{K}.$$
Thus, it follows from $\eqref{5.18}$ that
\begin{align*}
\iint_{\Pi_{_{T_0}}} \big\{&\big(\delta_h(t-\rho)-\delta_h(t-\tau)\big)\xi^{\varepsilon}(t,x)|u(t,x)-v(t,x)|\nonumber\\
&+L\big(\alpha_h(t-\rho)-\alpha_h(t-\tau)\big)\xi^{\varepsilon}(t,x)|u(t,x)-v(t,x)|\big\}
\, {\rm d}t{\rm d}x \geq 0,
\end{align*}	
which, by letting $\varepsilon\rightarrow 0$ and using $\eqref{5.19}$, implies
\begin{equation}\label{5.23}
 \int_0^{T_0} \big\{\big(\delta_h(t-\rho)-\delta_h(t-\tau)\big )\mu(t)
 +L\big(\alpha_h(t-\rho)-\alpha_h(t-\tau)\big)\mu(t)\big\}\, {\rm d}t \geq 0.
\end{equation}

From the properties of $\delta_h(\sigma)$ in $\eqref{5.3}$, 
for $h\leq \min\{\rho,T_0-\tau\}$, 
\begin{align*}
\Big|\int_0^{T_0} \delta_h(t-\rho)\mu(t)\, {\rm d}t-\mu(\rho)\Big|
&=\Big|\int_0^{T_0}\delta_h(t-\rho)(\mu(t)-\mu(\rho))\, {\rm d}t\Big|\\
&\lesssim h^{-1}\int_{\rho-h}^{\rho+h}|\mu(t)-\mu(\rho)|\, {\rm d}t,
\end{align*}
which, by letting $h\rightarrow 0$, implies
\begin{equation*}
\lim_{h\rightarrow 0}\int_0^{T_0} \delta_h(t-\rho)\mu(t)\, {\rm d}t=\mu(\rho)\qquad{\rm a.e.}\,\,0<\rho<\tau<T_0.
\end{equation*}
Similarly, we have
\begin{equation*}
\lim_{h\rightarrow 0}\int_0^{T_0} \delta_h(t-\tau)\mu(t)\, {\rm d}t=\mu(\tau)\qquad\, {\rm a.e.}\,\,\,0<\rho<\tau<T_0.
\end{equation*}
Then we obtain
\begin{align*}
\int_0^{T_0} L\big(\alpha_h(t-\rho)-\alpha_h(t-\tau)\big)\mu(t)\, {\rm d}t
&=\int_0^{T_0} L\mu(t)\int_{t-\tau}^{t-\rho}\delta_h(\sigma)\, {\rm d}\sigma {\rm d}t\nonumber\\
&=\int_0^{T_0} L\mu(t)\int_{\rho}^{\tau}\delta_h(t-\beta)\, {\rm d}\beta {\rm d}t\nonumber\\
&= L\int_{\rho}^{\tau}\Big(\int_0^{T_0} \mu(t)\delta_h(t-\beta)\, {\rm d}t\Big){\rm d}\beta\nonumber \\
&\longrightarrow L\int_{\rho}^{\tau}\mu(\beta)\,{\rm d}\beta\qquad{\rm as}\,\, h\rightarrow 0.
\end{align*}
Therefore, 
by letting $h\rightarrow 0$ in $\eqref{5.23}$, 
we conclude $\eqref{5.20}$.

\medskip
\noindent
{\bf 4.} By letting $\rho\rightarrow 0$ in $\eqref{5.20}$, 
\begin{equation}\label{5.24x}
\mu(\tau)\leq \mu(0)+L\int_{0}^{\tau}\mu(t)\, {\rm d}t \qquad{\rm a.e.}\,\,\,0<\tau<T_0.
\end{equation}
Denote $\eta(\tau):=\int_{0}^{\tau}\mu(t)\, {\rm d}t$. It follows from $\eqref{5.24x}$ that 
\begin{equation*}
\eta'(\tau)=\mu(\tau)\leq L\eta(\tau)+\mu(0)\qquad {\rm a.e.}\,\,\,0<\tau<T_0,
\end{equation*}
which implies
$$\big (\eta(\tau) e^{-L\tau}\big )'=\big(\eta'(\tau)-L\eta(\tau)\big) 
e^{-L\tau}\leq \mu(0)e^{-L\tau} \qquad {\rm a.e.}\,\,\,0<\tau<T_0. $$
This means that, for each $t\in(0,T_0)$, 
\begin{equation}\label{5.26x}
\eta(t) e^{-Lt}\leq \eta(0)+\int_0^t\mu(0)e^{-L\tau}\, {\rm d}\tau.
\end{equation}
Since $\eta(0)=0$, it follows from $\eqref{5.24x}$--$\eqref{5.26x}$ that, 
for {\it a.e.} $t\in (0,T_0)$,
\begin{align*}
\mu(t)\leq \mu(0)+L\eta(t)\leq \mu(0)+Le^{Lt}\int_0^t\mu(0)e^{-L\tau}\, {\rm d}\tau 
=\mu(0)e^{Lt}.
\end{align*}
By the definition of $\mu(t)$ in $\eqref{5.19}$, we conclude $\eqref{5.9}$.
This completes the proof.
\end{proof}

\begin{proof}[Proof of {\rm Theorem} $\ref{th:5.2}$]
From $\eqref{nmr}$, for any $(t,x)\in \Pi_{_{T}}$,
there exists a characteristic cone containing point $(t,x)$. 
Then Theorem $\ref{th:5.2}$ follows directly from Theorem $\ref{th:5.1}$.
\end{proof}

\section{Existence of 
Entropy Solutions: Proof of Theorem $\ref{th:ex}$}

In Kruzkov \cite{[Ksn2]}, the source term was assumed to be Lipschitz in $u$, and the existence was proved by 
the vanishing viscosity method, in which the key point was to establish the {\it a priori} estimate of the $L^1$--continuity modulus 
of the solutions $u^\varepsilon$ of the second-order quasilinear parabolic equations. 
For Theorem $\ref{th:ex}$,
the regularity of the continuous source term $g(t,x,u)$ with respect to $u$ 
is assumed to be only right-Lipschitz (not necessarily left-Lipschitz). 
The existence is proved based on the observation that the {\it a priori} estimate of the $L^1$--continuity modulus depends only on 
the supremum of the first-order difference of the source term $g(t,x,u)$ with respect to $u$, 
as well as some derivatives of the flux functions $f_i(t,x,u)$ and the first-order derivatives of the source term $g(t,x,u)$ 
with respect to $(t,x)$. 

\begin{proof}[Proof of {\rm Theorem} $\ref{th:ex}$]
We now construct the 
entropy solutions of the Cauchy problem 
$\eqref{1.1}$--$\eqref{ID}$ in the sense of Kruzkov, by the vanishing viscosity method. 
The proof is divided into four steps. 

\smallskip
\noindent
{\bf 1.} We first consider the Cauchy problem of second-order quasilinear parabolic equations:
\begin{eqnarray}
&&u^{\Lambda}_t+f_i^{m,l}(t,x,u^{\Lambda})_{x_i}
-g^{m,l}(t,x,u^{\Lambda})=\varepsilon\Delta u^{\Lambda},
\label{pe}\\[1mm]
&&u^{\Lambda}|_{t=0}=u^h_0(x), \label{peID}
\end{eqnarray}
where the
parameters $\Lambda:=(h,l,\varepsilon,m)$ satisfy $h,l,\varepsilon\in (0,1]$ and $m\in \mathbb{N}$.
The functions $f_i^{m,l}(t,x,u)$ and $g^{m,l}(t,x,u)$ are defined by
\begin{align}\label{fgml}
    f_i^{m,l}(t,x,u):=\eta^m(|x|)f_i^l(t,x,u), \qquad 
    g^{m,l}(t,x,u):=\eta^m(|x|)g^l(t,x,u),
\end{align}
where $\eta^m(\sigma):=1-\int^\sigma_{-\infty}\delta(\varsigma-m)\,{\rm d}\varsigma$, and the functions $f_i^l(t,x,u)$ and $g^l(t,x,u)$ are given by
\begin{eqnarray}
&& f_i^l(t,x,u):=\iiint_{\Pi_{_{T}}\times \mathbb{R}}
\delta_l(t-\tau)\, \textstyle\prod^n_{i=1}\delta_l(x_i-y_i)\delta_l(u-v)f_i(\tau,y,v)\, 
{\rm d}\tau {\rm d}y {\rm d}v,
\label{fgl1}\\[1mm]
&&g^l(t,x,u):=\iiint_{\Pi_{_{T}}\times \mathbb{R}}
\delta_l(t-\tau)\, \textstyle\prod^n_{i=1}\delta_l(x_i-y_i)\delta_l(u-v)g(\tau,y,v)\, 
{\rm d}\tau {\rm d}y {\rm d}v.\label{fgl2}
\end{eqnarray}
The function $u^h_0(x)$ with $h\in(0,1]$ is
the modifying function of $u_0(x)\in L^\infty(\mathbb{R}^n)$, {\it i.e.}, 
\begin{align}\label{peID0}
u_0^h(x):=\int_{\mathbb{R}^n} \textstyle\prod^n_{i=1}\delta_h(x_i-y_i)u_0(y)\, {\rm d}y.
\end{align}

It follows from $\eqref{fgml}$--$\eqref{fgl2}$ that the bounds of the functions $f_i^l(t,x,u)$ and $g^l(t,x,u)$ with their derivatives 
are inherited from the bounds of the functions $f_i(t,x,u)$ and $g(t,x,u)$ with their derivatives, 
and hence so are the bounds of the functions $f_i^{m,l}(t,x,u)$ and $g^{m,l}(t,x,u)$ with their derivatives. 
Specially, from $\eqref{ex2}$, $\eqref{5.1}$, and $\eqref{fgml}$--$\eqref{fgl2}$, 
the first-order difference of $g^{m,l}$ with respect to $u$ satisfies
\begin{align*}
g^{m,l}[u,v]&=\frac{g^{m,l}(t,x,u)-g^{m,l}(t,x,v)}{u-v}\nonumber\\
&=\eta^m(|x|)\!\!\iiint_{\Pi_{_{T}}\times \mathbb{R}}
\delta_l(t{-}\tau)\textstyle\prod^n_{i=1}\delta_l(x_i{-}y_i)
\delta(w)\dfrac{g(\tau,y,u{-}lw)-g(\tau,y,v{-}lw)}{u-v}
\,{\rm d}\tau{\rm d}y{\rm d}w\nonumber\\
&\leq\iiint_{\Pi_{_{T}}\times \mathbb{R}}\!\!
\delta_l(t{-}\tau)\textstyle\prod^n_{i=1}\delta_l(x_i{-}y_i)
\displaystyle\delta(w)L(\tau,y;\mathbb{R})
\,{\rm d}\tau{\rm d}y{\rm d}w
\leq\sup_{(\tau,y)\in\Pi_{_{T}}}L(\tau,y;\mathbb{R})\leq c_2,
\end{align*}
which implies that, for any $l\in(0,1]$ and $m\in \mathbb{N}$,
\begin{align}\label{gmlu}
g^{m,l}_u(t,x,u)\leq c_2\qquad {\rm for\ any}\ (t,x,u)\in \Pi_{_{T}}\times \mathbb{R}.
\end{align}

Since $\|u_0(x)\|_{L^{\infty}(\mathbb{R}^n)}=M_0$, the function $u^h_0(x)$ with $h\in(0,1]$ is bounded by $M_0$, 
and at least possesses bounded derivatives up to the third order. 
Therefore, by the theory of second-order parabolic equations ({\it cf.} \cite{[OK],[FA]}), 
the Cauchy problem $\eqref{pe}$--$\eqref{peID}$ admits a unique classical solution $u^\Lambda(t,x)$, 
which is bounded in $\Pi_{_{T}}$, 
and possesses bounded and uniformly H\"{o}lder continuous derivatives in equation $\eqref{pe}$.

Let $\omega_R(\sigma)$ be the $L^1$--continuity modulus of $u_0(x)\in L^\infty(\mathbb{R}^n)$: 
\begin{align}\label{JRu00}
J_R(u_0(x),\Delta x):=\int_{B_R(0)}|u_0(x+\Delta x)-u_0(x)|\,{\rm d}x\leq \omega_R(|\Delta x|)
\qquad\,\, \mbox{for any $R>0$}.
\end{align}
Then it follows from $\eqref{peID0}$ and $\eqref{JRu00}$ that, 
for any $R\geq 1$,
\begin{align}\label{uhu0}
  \int_{B_R(0)}\big|u^h_0(x)-u_0(x)\big|\,{\rm d}x
&\leq\int_{\mathbb{R}^n}\textstyle\prod^n_{i=1}\delta(z_i)
\displaystyle\int_{B_R(0)}|u_0(x-hz)-u_0(x)|
\,{\rm d}x{\rm d}z\nonumber\\
&\leq\int_{\mathbb{R}^n}\textstyle\prod^n_{i=1}\delta(z_i)\,
\omega_R(h|z|)\,{\rm d}z\nonumber\\
&\leq \omega_R(\sqrt{n}h).
\end{align}

\smallskip
\noindent
{\bf 2.} We first show the uniform bound of $u^\Lambda(t,x)$ 
with respect to all the parameters in $\Lambda$, {\it i.e.},
\begin{align}\label{buL}
    |u^\Lambda(t,x)|\leq (M_0+c_0T)e^{1+|c_1+c_2|T}=M.
\end{align}

In fact, from $\eqref{pe}$, 
\begin{align}\label{pe1}
  \mathcal{L}u^{\Lambda}:=\varepsilon\Delta u^{\Lambda}- u^{\Lambda}_t
  =f_i^{m,l}(t,x,u^{\Lambda})_{x_i}-g^{m,l}(t,x,u^{\Lambda})
  =:f(t,x,u^{\Lambda},\nabla_xu^{\Lambda}).
\end{align}
For constants $c_0$, $c_1$, and $c_2$ in $\eqref{ex1}$--$\eqref{ex2}$, 
by setting $\rho:=b-|c_1+c_2|>0$, we define
\begin{align*}
 v(t,x):=\big(\frac{c_0}{\rho}+M_0\big)e^{bt}.
\end{align*}
Then $v(0,x)\geq u_0^h(x) \geq -v(0,x)$. 
Noticing that $v\geq 0$, it follows from $\eqref{pe1}$ that
\begin{align*}
\mp f(t,x,\pm v,\nabla_x(\pm v))
&=\mp f_{ix_i}^{m,l}(t,x,\pm v)\pm g^{m,l}(t,x,\pm v)\nonumber\\
&=-f_{ix_iu}^{m,l}(t,x,\tilde{v})v\mp f_{ix_i}^{m,l}(t,x,0)
+\tfrac{g^{m,l}(t,x,\pm v)-g^{m,l}(t,x,0)}{\pm v-0}v\pm g^{m,l}(t,x,0)
\nonumber\\[1mm]
&\leq  (c_1+c_2)v+c_0 
\leq  |c_1+c_2|v+c_0
\nonumber\\[1mm]
&
=bv+c_0-\rho v
\leq bv 
=\mp \mathcal{L}(\pm v).
\end{align*}
This means that $\mathcal{L}v\leq f(t,x,v,\nabla_xv)$ and 
$\mathcal{L}(-v)\geq f(t,x,-v,\nabla_x(-v))$. 
Thus, by the maximum principle, $|u^\Lambda(t,x)|\leq v(t,x)$. 
Therefore, by taking $v(t,x)$ with $\rho=1/T$, we obtain
\begin{align*}
|u^\Lambda(t,x)|\leq v(t,x)
=\big(c_0T+M_0\big)e^{|c_1+c_2|t+t/T}
\leq \big(M_0+c_0T\big)e^{1+|c_1+c_2|T}.
\end{align*}

\smallskip
\noindent
{\bf 3.} We now prove the uniform estimate 
of the $L^1$-continuity modulus in $x$ and $t$ of the solutions $u^\Lambda(t,x)$ with respect 
to parameters $(l,\varepsilon)$ 
in $\Lambda$: 
When $t\in [0,T]$,
\begin{eqnarray}
&&J_r(u^{\Lambda}(t,x),\Delta x):=
   \int_{B_r(0)}|u^{\Lambda}(t,x+\Delta x)-u^{\Lambda}(t,x)|\,{\rm d}x
   \leq \omega_r^{x;h,m}(|\Delta x|),\label{Jrux}\\[1mm]
&&I_r(u^{\Lambda}(t,x),\Delta t):=
\int_{B_r(0)}|u^{\Lambda}(t+\Delta t,x)-u^{\Lambda}(t,x)|\,{\rm d}x
\leq \omega_r^{t;h,m}(|\Delta t|), \label{Irut0}
\end{eqnarray}
for some functions $\omega_r^{x;h,m}(\sigma)$ and $\omega_r^{t;h,m}(\sigma)$ independent of 
$l,\varepsilon\in (0,1]$. 

In what follows, we denote $C_j(m)$ for $j\in \mathbb{N}$ the constants that 
depend only on parameter $m$ in $\Lambda$ and constants $(n, M)$.

\smallskip
\noindent
{\bf 3.1.} We first prove $\eqref{Jrux}$. 
For $k=1,2,\cdots,n$, let $v^{\Lambda}_k=v^{\Lambda}_k(t,x):=u^{\Lambda}_{x_k}(t,x)$. 
By taking the derivatives $\partial_{x_k}$ on $\eqref{pe}$, we obtain
\begin{align}\label{pew}
    (v^{\Lambda}_k)_t
    +\big(f_{iu}^{m,l}\, v^{\Lambda}_k\big)_{x_i}
    +\big(f_{iux_k}^{m,l}-\delta_{ik}\,g^{m,l}_u\big) v^{\Lambda}_i
    +f_{ix_ix_k}^{m,l}-g^{m,l}_{x_k}
    =\varepsilon\Delta v^{\Lambda}_k,
\end{align}
where the partial derivatives of $f^{m,l}_i$ and $g^{m,l}$ are functions of $(t,x,u^{\Lambda}(t,x))$, 
and $\delta_{ik}=1$ if $i=k$ and $\delta_{ik}=0$ if $i\neq k$.

For $k=1,2,\cdots,n$, multiplying $\eqref{pew}$ by a compactly supported function 
$\phi^k=\phi^k(t,x)$ in $\Pi_\tau$ for $\tau\in(0,T]$ with continuous derivatives $\phi^k_t$, 
$\phi^k_{x_i}$, and $\phi^k_{x_ix_j}$,
integrating it over $\Pi_{\tau}$, summing over $k$ from $1$ to $n$, and integrating by part, we obtain
\begin{align}\label{pewtau}
\int_{\mathbb{R}^n}v^{\Lambda}_k\,\phi^k\big|_{t=0}^{t=\tau}\,{\rm d}x=
\iint_{\Pi_{\tau}}v^{\Lambda}_k\,\mathcal{L}_k(\phi)\,{\rm d}t{\rm d}x
+\iint_{\Pi_{\tau}}\big({-}f_{ix_ix_k}^{m,l}+g^{m,l}_{x_k}\big)\phi^k\,{\rm d}t{\rm d}x,
\end{align}
where $\phi:=(\phi^1,\cdots,\phi^n)$, and the adjoint linear parabolic operator $\mathcal{L}_k$ is given by
\begin{align}\label{pewL}
\mathcal{L}_k(\phi):=\phi^k_t+f^{m,l}_{iu}\,\phi^k_{x_i}
+\big({-}f_{kux_i}^{m,l}+\delta_{ki}\,g^{m,l}_u\big)\phi^i
+\varepsilon \Delta \phi^k.
\end{align}

Now, fix $r>1$. Let 
$q_{h'}(t,x):=(q_{h'}^1(t,x),\cdots,q_{h'}^n(t,x))$ solves 
the Cauchy problem of the following linear parabolic system in $\Pi_{\tau}$:
\begin{align}\label{Lqh}
\mathcal{L}_k(q_{h'}(t,x))=0,\qquad q_{h'}^k(\tau,x)=\beta_{h'}^k(x),
\end{align}
where $\beta_{h'}^k(x)$ with $h'\in (0,1]$ is the modifying function of $\beta^k(x)$ given by 
$$
\beta^k(x)={\rm sgn}\,v^{\Lambda}_k(\tau,x)\quad {\rm if}\ |x|\leq r-h',
\qquad\,\, \beta^k(x)=0 \quad {\rm if}\ |x|>r-h'.
$$
Then it follows from $\eqref{gmlu}$ and $\eqref{pewL}$--$\eqref{Lqh}$ that
\begin{align}\label{pewL2}
0=2q_{h'}^k\mathcal{L}_k(q_{h'})
&=((q_{h'})^2)_t+f^{m,l}_{iu}\,((q_{h'})^2)_{x_i}
-2f_{kux_i}^{m,l}\,q^k_{h'}\,q^i_{h'}
+2\,g^{m,l}_u\,(q_{h'})^2
+2\varepsilon q_{h'}^k \Delta q_{h'}^k
\nonumber\\
&\leq ((q_{h'})^2)_t+f^{m,l}_{iu}\,((q_{h'})^2)_{x_i}
+2|f_{kux_i}^{m,l}|\,|q^k_{h'}|\,|q^i_{h'}|
+2c_2\,(q_{h'})^2+\varepsilon \Delta (q_{h'})^2
\nonumber\\
&\leq((q_{h'})^2)_t+f^{m,l}_{iu}\,((q_{h'})^2)_{x_i}+C_{2}(m) (q_{h'})^2
+\varepsilon \Delta (q_{h'})^2
=:\mathcal{L}((q_{h'})^2),
\end{align}
where $(q_{h'})^2:=q_{h'}^k\,q_{h'}^k$ and constant $C_{2}(m):=2nC_1(m)+2c_2$
with $C_1(m)$ being the uniform bound of the continuous functions $\{f^{m,l}_{kux_i}\}_{k,i=1}^n$ on $[0,T]\times B_m(0)\times [-M,M]$.
By the maximum principle, the function $(q_{h'})^2$ with $q_{h'}(\tau,x)=0$ for $|x|\geq r$ satisfies 
\begin{align}\label{qQh}
(q_{h'})^2\leq C_3(m) \qquad \mbox{for $(t,x)\in \Pi_{\tau}$}.
\end{align}

Denote $Q_{\varepsilon,m}:=Q_{\varepsilon,m}(t,x)$ by
\begin{align*}
Q_{\varepsilon,m}(t,x):=C_3(m)\exp\big\{\varepsilon^{-1}\big( H(\tau-t)+r-|x|\big)
 +\tau|C_{2}(m)|+(\tau-t)C_{2}(m)\big\},
\end{align*}
where $H:=1+\bar{N}$, and $\bar{N}$ is given by $\eqref{ex0}$. Let $\Omega$ be given by 
\begin{align*}
\Omega:=\big\{(t,x)\,:\, |x|\geq r+H(\tau-t),\, t\in [0,\tau]\big\}.
\end{align*}
Then we see that 
$$
Q_{\varepsilon,m}(t,x)\big|_{|x|=r+H(\tau-t)}\geq C_3(m), 
\qquad Q_{\varepsilon,m}(\tau,x)\big|_{|x|\geq r}\geq 0,
$$
and, for any $(t,x)\in \Omega$,
$$\mathcal{L}(Q_{\varepsilon,m})=Q_{\varepsilon,m}\big\{{-}\varepsilon^{-1}\big(\bar{N}+f^{m,l}_{iu}x_i|x|^{-1}\big)
-(n{-}1)|x|^{-1}\big\}\leq 0.
$$
By $\eqref{pewL2}$--$\eqref{qQh}$, it implies that
\begin{align}\label{qQ}
(q_{h'})^2=(q_{h'}(t,x))^2\leq Q_{\varepsilon,m}(t,x) \qquad {\rm for\ any}\ (t,x)\in \Omega.
\end{align}
Furthermore, $\eqref{qQ}$ implies that,
for any $\varepsilon\in (0,1]$, 
if $|x|\geq \frac{1}{1-\sqrt{\varepsilon}/2}(H\tau+r)$, then
\begin{align}\label{qhx2}
(q_{h'})^2
\leq C_3(m)\exp\big\{{-}\tfrac{1}{2\sqrt{\varepsilon}}|x|+2|C_{2}(m)|T\big\}
=:C_4(m) \exp\big\{{-}\tfrac{1}{2\sqrt{\varepsilon}}|x|\big\}.
\end{align}
Thus, for $R\geq \bar{r}:=2r+2(1+\bar{N})T>1$, 
\begin{align}\label{qhx0}
\int_{\mathbb{R}^n/B_R(0)}|q_{h'}^k(0,x)|\,{\rm d}x
&\leq \sqrt{C_4(m)} \int_{\mathbb{R}^n/B_R(0)}\exp\big\{{-}\tfrac{1}{4\sqrt{\varepsilon}}|x|\big\}\,{\rm d}x
\nonumber\\[1mm]
&
\leq C_5(m)\, R^{n-1}\exp\big\{{-}\tfrac{1}{4\sqrt{\varepsilon}}R\big\}.
\end{align}

According to $\eqref{Lqh}$, 
by taking $\phi^k=\phi^k(t,x):=q_{h'}^k(t,x)\eta^{m'}(|x|)$ into $\eqref{pewtau}$ 
and using integration by parts, we obtain
\begin{align}\label{pewtau1}
\int_{\mathbb{R}^n}v^{\Lambda}_k\,q_{h'}^k\,\eta^{m'}\big|_{t=\tau}\,{\rm d}x
&=\int_{\mathbb{R}^n}v^{\Lambda}_k\,q_{h'}^k\,\eta^{m'}\big|_{t=0}\,{\rm d}x
+\iint_{\Pi_{\tau}}\big({-}f_{ix_ix_k}^{m,l}+g^{m,l}_{x_k}\big)q_{h'}^k\,\eta^{m'}\,{\rm d}t{\rm d}x\nonumber\\
&\quad\,\,\,+\iint_{\Pi_{\tau}}
\Big\{\big(f^{m,l}_{iu}\,v^{\Lambda}_k-2\varepsilon (v^{\Lambda}_k)_{x_i}\big)
\frac{x_i}{|x|}\delta(|x|-m')
-\varepsilon v^{\Lambda}_k\Delta \eta^{m'}\Big\}q_{h'}^k
\,{\rm d}t{\rm d}x\nonumber\\
&=:A_1+A_2.
\end{align}
Since $\delta(|x|-m')\equiv 0$ for $|x|<m'-1$, $\lim_{m'\rightarrow \infty}A_2=0$.
Letting $m'\rightarrow \infty$ in $\eqref{pewtau1}$,
we have
\begin{align*}
\int_{\mathbb{R}^n}v^{\Lambda}_k\,q_{h'}^k\big|_{t=\tau}\,{\rm d}x
&=\int_{\mathbb{R}^n}v^{\Lambda}_k\,q_{h'}^k\big|_{t=0}\,{\rm d}x
+\iint_{\Pi_{\tau}}\big({-}f_{ix_ix_k}^{m,l}+g^{m,l}_{x_k}\big)q_{h'}^k\,{\rm d}t{\rm d}x.
\end{align*}
By $\eqref{qQh}$ and $\eqref{qhx2}$--$\eqref{qhx0}$, 
for any $R\geq \bar{r}$,
\begin{align}\label{pewtau3}
\int_{\mathbb{R}^n}v^{\Lambda}_k(\tau,x)q_{h'}^k(\tau,x)\,{\rm d}x
&\leq \int_{\mathbb{R}^n}|v^{\Lambda}_k(0,x)||q_{h'}^k(0,x)|\,{\rm d}x
+\iint_{[0,\tau]\times B_m(0)}\big|{-}f_{ix_ix_k}^{m,l}+g^{m,l}_{x_k}\big||q_{h'}^k|\,{\rm d}t{\rm d}x\nonumber\\
&\leq n\sqrt{C_3(m)}\frac{M_0}{h}|B_R(0)| 
+\frac{M_0}{h}\int_{\mathbb{R}^n/B_R(0)}\sum_{k=1}^n|q_{h'}^k(0,x)|\,{\rm d}x
+C_6(m)\nonumber\\[1mm]
&\leq C_7(m)\Big(\frac{R^n}{h}+\frac{R^{n-1}}{h}\exp\big\{{-}\tfrac{1}{4\sqrt{\varepsilon}}R\big\}+1\Big)
\nonumber\\[1mm]
&
\leq C_8(m)\frac{R^n}{h},
\end{align}
where we have used the fact that 
$|v^{\Lambda}_k(0,x)|=|\partial_{x_k}(u^h_0(x))|\leq \frac{M_0}{h}$.
Then, 
by letting $h'\rightarrow 0$,
\begin{align*}
\int_{B_r(0)}\sum^n_{k=1}|v^{\Lambda}_k(\tau,x)|\,{\rm d}x
\leq C_8(m) \frac{R^n}{h} 
\qquad {\rm for\ any}\ R\geq \bar{r}.
\end{align*}
By the arbitrariness of $\tau\in (0,T]$ and taking $R=\bar{r}$, we conclude that, for any $t\in [0,T]$,
\begin{align}\label{Jrux1}
J_r(u^{\Lambda}(t,x),\Delta x)
\leq C_9(m)\big(r+(1+\bar{N})T\big)^n\, \frac{|\Delta x|}{h}
=:\omega_r^{x;h,m}(|\Delta x|).
\end{align}

\smallskip
\noindent
{\bf 3.2.} For the uniform estimate 
of the $L^1$-continuity modulus in $t$ of $u^\Lambda(t,x)$ 
with respect to parameters $(l,\varepsilon)$ in $\Lambda$ as in $\eqref{Irut0}$, 
by the same arguments in \S 4 of Kruzkov \cite{[Ksn2]}, 
it follows from $\eqref{Jrux1}$ that, 
for any $t\in [0,T]$,
\begin{align}\label{Irut}
I_r(u^{\Lambda}(t,x),\Delta t)&=
\int_{B_r(0)}|u^{\Lambda}(t+\Delta t,x)-u^{\Lambda}(t,x)|\,{\rm d}x\nonumber\\
&\leq C_{10}(m)C_1(r) \min_{0<\sigma\leq 1} \Big\{\omega_r^{x;h,m}(\sigma)+\sigma+\sigma^{-2}|\Delta t| \Big\}
\nonumber\\
&\leq C_9(m)C_{10}(m)C_1(r)\big(r+(1+\bar{N})T\big)^n\, 
\frac{|\Delta t|^{1/3}}{h^{2/3}}
\nonumber\\
&
=:\omega_r^{t;h,m}(|\Delta t|),
\end{align}
where constant $C_1(r)$ depends only on $r$ 
and constants $(n, M)$.

\smallskip
\noindent
{\bf 4.} We now pass the limit of $u^{\Lambda}(t,x)$ with respect to all the parameters in $\Lambda$ to 
find the Kruzkov entropy solution $u(t,x)$ of 
the Cauchy problem $\eqref{1.1}$--$\eqref{ID}$ 
in the sense of Definition $\ref{def:2.2}$.

Let $\Phi(u)\in C^2(\mathbb{R})$ be a convex function and $\phi\in C^\infty_{\rm c}(\mathring{\Pi}_{_T})$ with $\phi\geq 0$. 
Multiplying $\eqref{pe}$ by $\Phi'(u^{\Lambda})\phi$, integrating it over $\Pi_{_{T}}$, and integrating by part, we obtain
\begin{align}\label{Phiphi}
0\leq \iint_{\Pi_{_{T}}}&
\varepsilon \phi\Phi''(u^{\Lambda})u^{\Lambda}_{x_i}u^{\Lambda}_{x_i}
\,{\rm d}t{\rm d}x\nonumber\\
=\iint_{\Pi_{_{T}}}
\Big\{&\varepsilon\Phi(u^{\Lambda})\Delta \phi
-\phi\Phi(u^{\Lambda})_t-\phi \Phi'(u^{\Lambda})f_{iu}^{m,l}(t,x,u^{\Lambda})u^{\Lambda}_{x_i}
\nonumber\\
&-\phi \Phi'(u^{\Lambda})f_{ix_i}^{m,l}(t,x,u^{\Lambda})
+\phi \Phi'(u^{\Lambda})g^{m,l}(t,x,u^{\Lambda})\Big\}
\,{\rm d}t{\rm d}x
\nonumber\\
=\iint_{\Pi_{_{T}}}
\Big\{&\Phi(u^{\Lambda}) (\phi_t+\varepsilon\Delta \phi)
+\int_k^{u^{\Lambda}}\Phi'(\zeta)f_{iu}^{m,l}(t,x,\zeta)\,{\rm d}\zeta\, \phi_{x_i}
-\Phi'(u^{\Lambda})f_{ix_i}^{m,l}(t,x,u^{\Lambda})\phi
\nonumber\\
&+\int_k^{u^{\Lambda}}\Phi'(\zeta)f_{iux_i}^{m,l}(t,x,\zeta)\,{\rm d}\zeta\,\phi
+\Phi'(u^{\Lambda})g^{m,l}(t,x,u^{\Lambda})\phi \Big\}
\,{\rm d}t{\rm d}x.
\end{align}

\smallskip
\noindent
{\bf 4.1.} 
We first fix parameters $(l,\varepsilon,m)$ in $\Lambda$. Denote $\Lambda_j:=(h_j,l,\varepsilon,m)$ for $j=1,2$. 
Multiplying $\eqref{pe}$ by a compactly supported function $\phi^k=\phi^k(t,x)$ with continuous derivatives
 $\phi^k_t$, $\phi^k_{x_i}$, and $\phi^k_{x_ix_j}$ in $\Pi_\tau$ for $\tau\in(0,T]$, 
 integrating it over $\Pi_\tau$, and then integrating by part, 
 we obtain that, for $j=1,2$,
\begin{align*}
\int_{\mathbb{R}^n}u^{\Lambda_j}\phi\big|_{t=0}^{t=\tau}\,{\rm d}x
=\iint_{\Pi_{\tau}}
\big\{u^{\Lambda_j}(\phi_t+\varepsilon \Delta \phi)+f^{m,l}_i(t,x,u^{\Lambda_j})\phi_{x_i}
+g^{m,l}(t,x,u^{\Lambda_j})\phi\big\}\,{\rm d}t{\rm d}x.
\end{align*}
Let $w=w(t,x):=u^{\Lambda_2}(t,x)-u^{\Lambda_1}(t,x)$.
Then
\begin{align}\label{pewuu1}
\int_{\mathbb{R}^n}w\phi\big|_{t=0}^{t=\tau}\,{\rm d}x=
\iint_{\Pi_{\tau}}w\big(\phi_t+a_i\phi_{x_i}
+b\phi+\varepsilon \Delta \phi\big)\,{\rm d}t{\rm d}x, 
\end{align} 
where $a_i$ and $b$ are respectively given by
\begin{align*}
a_i&:=\int_0^1f_{iu}^{m,l}(t,x,(1-\theta)u^{\Lambda_1}(t,x)
+\theta u^{\Lambda_2}(t,x))\,{\rm d}\theta,
\nonumber\\
b&:=\int_0^1 g_{u}^{m,l}(t,x,(1-\theta)u^{\Lambda_1}(t,x)
+\theta u^{\Lambda_2}(t,x))\,{\rm d}\theta.
\end{align*}
By the same arguments done in Step {1.2}, it follows from $\eqref{uhu0}$ and $\eqref{pewuu1}$ that 
\begin{align}\label{pewuu10}
\int_{B_r(0)}\big|w(t,x)\big|\,{\rm d}x
&\leq C_{11}(m) \int_{\mathbb{R}^n}
\exp\big\{{-}\tfrac{1}{2\sqrt{\varepsilon}}|x|\big\}
\big|u^{h_2}_0(x)-u^{h_1}_0(x)\big|\,{\rm d}x
\nonumber\\
&\leq C_{11}(m) \sum_{j=1}^2\int_{B_R(0)}\big|u^{h_j}_0(x)-u_0(x)\big|\,{\rm d}x
+2M_0\int_{\mathbb{R}^n/B_R(0)}
\exp\big\{{-}\tfrac{1}{2\sqrt{\varepsilon}}|x|\big\}\,{\rm d}x
\nonumber\\[1mm]
&\leq C_{12}(m)\Big(\omega_R(\sqrt{n}h_1)+\omega_R(\sqrt{n}h_2)
+R^{n-1}\exp\big\{{-}\tfrac{1}{2\sqrt{\varepsilon}}R\big\}\Big)
\nonumber\\[1mm]
&\leq C_{12}(m)\Big(\omega_R(\sqrt{n}h_1)+\omega_R(\sqrt{n}h_2)
+R^{n-1}\exp\big\{{-}\tfrac{1}{2}R\big\}\Big)
\end{align}
for any $R\geq 1$ and $t\in [0,T]$. 
Then, integrating over $[0,T]$ and letting $h_1,h_2\rightarrow 0$, we obtain that, for any $R\geq 1$,
\begin{align*}
\lim_{h_1,h_2\rightarrow 0}
\int_{[0,T]\times B_r(0)}\big|u^{\Lambda_2}(t,x)-u^{\Lambda_1}(t,x)\big|\,{\rm d}t{\rm d}x
\leq C_{12}(m)\, R^{n-1}\exp\big\{{-}\tfrac{1}{2}R\big\}T.
\end{align*}
Letting $R\rightarrow \infty$ yields
\begin{align}\label{pewu1u2}
\lim_{h_1,h_2\rightarrow 0}
\int_{[0,T]\times B_r(0)}\big|u^{\Lambda_2}(t,x)-u^{\Lambda_1}(t,x)\big|\,{\rm d}t{\rm d}x=0.
\end{align}

Therefore, for any fixed parameters $(l,\varepsilon,m)$ in $\Lambda$, 
$\eqref{pewu1u2}$ implies that 
$\{u^{\Lambda}(t,x)\}_{h\in(0,1]}$ is a Cauchy sequence in $L^1([0,T]\times B_r(0))$,
so that there exists a subsequence that converges almost everywhere to 
a bounded measurable function $u^{\Lambda_0}(t,x)$ in $[0,T]\times B_r(0)$, 
where $\Lambda_0:=(l,\varepsilon,m)$. 
Furthermore, by using the diagonal process with respect to $r=2,3,\cdots$, 
we can find a subsequence of $\{u^{\Lambda}(t,x)\}$ 
that converges almost everywhere in $\Pi_{_{T}}$ to 
a bounded function $u^{\Lambda_0}(t,x)$. 
Then it follows from $\eqref{Phiphi}$ that, 
for any convex function $\Phi(u)\in C^2(\mathbb{R})$ and 
$\phi\in C^\infty_{c}(\mathring{\Pi}_{_T})$ with $\phi\geq 0$, 
\begin{align}\label{Phiphi0}
0\leq \iint_{\Pi_{_{T}}}
\Big\{&\Phi(u^{\Lambda_0}) (\phi_t+\varepsilon\Delta \phi)
+\int_k^{u^{\Lambda_0}}\Phi'(\zeta)f_{iu}^{m,l}(t,x,\zeta)\,{\rm d}\zeta\, \phi_{x_i}
-\Phi'(u^{\Lambda_0})f_{ix_i}^{m,l}(t,x,u^{\Lambda_0})\phi
\nonumber\\
&+\int_k^{u^{\Lambda_0}}\Phi'(\zeta)f_{iux_i}^{m,l}(t,x,\zeta)\,{\rm d}\zeta\,\phi
+\Phi'(u^{\Lambda_0})g^{m,l}(t,x,u^{\Lambda_0})\phi \Big\}
\,{\rm d}t{\rm d}x.
\end{align}

\smallskip
\noindent
{\bf 4.2.} We now fix parameter $m$ in $\Lambda_0$. 
Clearly, 
$|u^{\Lambda_0}(t,x)|\leq M$. 
Letting $h_2\rightarrow 0$ and replacing $h_1$ by $h$ in $\eqref{pewuu10}$, we obtain that, 
for any $t\in [0,T]$ and $R\geq 1$,
\begin{align*}
\int_{B_r(0)}\big|u^{\Lambda}(t,x)-u^{\Lambda_0}(t,x)\big|\,{\rm d}x
\leq C_{12}(m)\Big(\omega_R(\sqrt{n}h)
+R^{n-1}\exp\big\{{-}\tfrac{1}{2}R\big\}\Big).
\end{align*}
Thus, by $\eqref{Jrux1}$--$\eqref{Irut}$, 
for any $t\in [0,T]$, 
\begin{align}
J_r(u^{\Lambda_0},\Delta x)&\leq C_{13}(m)C_2(r) 
\min_{\genfrac{}{}{0pt}{3}{h\in (0,1]}{R\geq 1}}
\Big\{\frac{|\Delta x|}{h}
+\omega_R(\sqrt{n}h)+R^{n-1}\exp\big\{{-}\tfrac{1}{2}R\big\}\Big\}\nonumber\\
&=:\omega_r^{x;m}(|\Delta x|),\label{JIux}\\
I_r(u^{\Lambda_0},\Delta t)&\leq C_{14}(m)C_3(r) 
\min_{\genfrac{}{}{0pt}{3}{h\in (0,1]}{R\geq 1}}
\Big\{\frac{|\Delta t|^{1/3}}{h^{2/3}}
+\omega_R(\sqrt{n}h)+R^{n-1}\exp\big\{{-}\tfrac{1}{2}R\big\}\Big\}\nonumber\\
&=:\omega_r^{t;m}(|\Delta t|),\label{JIut}
\end{align}
where $\omega_r^{x;m}(\sigma)$ and $\omega_r^{t;m}(\sigma)$ are independent of  parameters $(l,\varepsilon)$ in $\Lambda_0$.

For any fixed $m\in \mathbb{N}$, $\eqref{buL}$ and $\eqref{JIux}$--$\eqref{JIut}$ imply that the family of 
functions $\{u^{\Lambda_0}(t,x)\}$ with $l,\varepsilon\in (0,1]$ is compact in the $L^1$--norm 
in any cylinder $[0,T]\times B_r(0)$ for $r=2,3,\cdots$. 
Thus, by passing the limit with respect to parameters $(l,\varepsilon)$, 
we can find a subsequence of $\{u^{\Lambda_0}(t,x)\}$ 
that converges almost everywhere in any cylinder $[0,T]\times B_r(0)$ 
to a bounded function $u^m(t,x)$.
Furthermore, using the diagonal process with respect to $r$, 
we can find a subsequence of $\{u^{\Lambda_0}(t,x)\}$ that converges almost everywhere in $\Pi_{_{T}}$ 
to a bounded measurable function $u^m(t,x)$. 
Restricting to this subsequence, for any fixed $m\in \mathbb{N}$, 
passing the limit in $\eqref{Phiphi0}$ with respect to parameters $(l,\varepsilon)$, 
we obtain that, for any convex function $\Phi(u)\in C^2(\mathbb{R})$ 
and $\phi\in C^\infty_{c}(\mathring{\Pi}_{_T})$ with $\phi\geq 0$, 
\begin{align*}
0\leq &\iint_{\Pi_{_{T}}}
\Big\{\Phi(u^{m}) \phi_t
+\eta^m(|x|)\Big(\int_k^{u^{m}}\Phi'(\zeta)f_{iu}(t,x,\zeta)\,{\rm d}\zeta\,\phi_{x_i}
-\Phi'(u^{m})f_{ix_i}(t,x,u^{m})\,\phi
\nonumber\\
&\qquad\quad\qquad\qquad\qquad\qquad\quad\,\,
+\int_k^{u^{m}}\Phi'(\zeta)f_{iux_i}(t,x,\zeta)\,{\rm d}\zeta\,\phi
+\Phi'(u^{m})g(t,x,u^{m})\,\phi \Big)\Big\}
\,{\rm d}t{\rm d}x
\nonumber\\
&+\iint_{\Pi_{_{T}}}\Big\{{-}\Phi'(u^{m})f_{i}(t,x,u^{m})+\int_k^{u^{m}}\Phi'(\zeta)f_{iu}(t,x,\zeta)\,{\rm d}\zeta\Big\}
\frac{x_i}{|x|}\,\delta(|x|-m)\,\phi 
\,{\rm d}t{\rm d}x.
\end{align*}
Therefore, approximating the function $|u-k|$ via convex functions $\Phi(u)\in C^2(\mathbb{R})$, 
we see that, for any $k\in \mathbb{R}$ and
$\phi\in C^\infty_{\rm c}(\mathring{\Pi}_{_T})$ with $\phi\geq 0$, 
\begin{align}\label{Phiphim2}
0\leq &\iint_{\Pi_{_{T}}}
\Big\{|u^m-k| \phi_t
+\eta^m(|x|)\Big({\rm sgn}(u^m-k)\big(f_i(t,x,u^m)-f_i(t,x,k)\big)\,\phi_{x_i}
\nonumber\\
&\qquad \quad\,\,
+{\rm sgn}(u^m-k)\big({-}f_{ix_i}(t,x,k)+g(t,x,u^m)\big)\,\phi \Big)\Big\}
\,{\rm d}t{\rm d}x
\nonumber\\
&-\iint_{\Pi_{_{T}}}{\rm sgn}(u^m-k)f_i(t,x,k)
\frac{x_i}{|x|}\,\delta(|x|-m)\,\phi 
\,{\rm d}t{\rm d}x.
\end{align}
Taking $t=0$ in $\eqref{JIut}$, we obtain that, for any $r>0$ and $\rho\in [0,T]$,
\begin{align*}
\int_{B_r(0)}|u^{\Lambda_0}(\rho,x)-u_0(x)|\,{\rm d}x
\leq\omega_r^{t;m}(\rho).
\end{align*}
Thus, letting $(l,\varepsilon)\rightarrow (0,0)$, we conclude that, for any $r>0$,
\begin{align}\label{2.2am}
\lim_{\genfrac{}{}{0pt}{2}{\rho\rightarrow 0}{\rho\in[0,T]\setminus\mathcal{Z}}}
\int_{B_r(0)}|u^m(\rho,x)-u_0(x)|\,{\rm d}x=0.
\end{align}	

\smallskip
\noindent
{\bf 4.3.} 
Notice that $\eta^m(|x|)\equiv 1$ and $\delta(|x|-m)\equiv 0$ for $|x|\leq m-1$.
For any fixed $r>0$, 
if ${\rm spt}(\phi)\subset [0,T]\times B_r(0)$, then $\delta(|x|-m)\phi\equiv 0$ for any $m\geq \bar{r}+1=2r+2(1+\bar{N})T+1$.
Hence,$\eqref{Phiphim2}$ implies that $\eqref{2.2a}$ holds for $u^m(t,x)$ with $m\geq \bar{r}+1$.
By combining with $\eqref{2.2am}$, 
it follows from Theorem $\ref{th:5.1}$ that all the functions $u^m(t,x)$ for $m\geq \bar{r}+1$ must coincide 
with each other almost everywhere in the cylinder $[0,T]\times B_r(0)$. 
Thus, the sequence $u^m(t,x)$ converges almost everywhere in $\Pi_{_{T}}$ to a bounded measurable function $u(t,x)$. 
Therefore, it follows from $u(t,x)\equiv u^{m_r}(t,x)$ in $[0,T]\times B_r(0)$ for $m_r:=2+[\bar{r}]$ that 
$u(t,x)$ 
is an entropy solution of the Cauchy problem $\eqref{1.1}$--$\eqref{ID}$. 
\end{proof}

\section{Well-Posedness of the Characteristic ODEs: Proof of Lemma $\ref{lem:3.1}$}
From $\eqref{4.1}$, to solve the characteristic ODEs of equation $\eqref{eq1.1}$, it suffices to solve ODE $\eqref{3.3}$. 
Denote 
$G(u,t,s)$ by
\begin{equation}\label{3.4}
G(u,t,s)=\int^{u}_{s}\frac{{\rm d}\eta}{g(\eta)}-t.
\end{equation}
Then it is direct to see that, if $g(u)\neq 0$ for $u\in(s,\bar{u}(t,s))$, then
\begin{equation}\label{3.5}
\bar{u}(t,s)\ {\rm solves\ ODE\ \eqref{3.3}}  
\qquad {\rm if\ and\ only\ if}\qquad G(\bar{u}(t,s),t,s)=0.
\end{equation}

\smallskip
\begin{proof}[Proof of {\rm Lemma} $\ref{lem:3.1}$]
According to $\eqref{3.7}$--$\eqref{3.7a}$, 
we use $\eqref{3.5}$ to construct the continuous solution of ODE $\eqref{3.3}$ case by case, 
and prove (i)--(iii). 
The proof is divided into six steps.

\smallskip
\noindent
{\bf 1.} We first consider the case that $s \in A$, {\it i.e.}, $g(s)=0$.

\smallskip
{\bf (a).} If $s \in {\rm int} A$, 
then $\bar{u}(t,s) \equiv s$ for $t \in \mathbb{R}^+$ solves ODE $\eqref{3.3}$.
Hence, $\bar{u}_s(t,s) \equiv 1$ on $\mathbb{R}^+ \times {\rm int} A$. 
Then $\bar{u}(t,s)$ solves ODE $\eqref{3.3}$ uniquely on $\mathbb{R}^+ \times {\rm int} A$ 
and $\eqref{3.9}$ holds.

\smallskip
{\bf (b).} If $s \in \partial A$, we claim: 
{\it 
For any $s_0 \in \partial A$, 
$\bar{u}(t,s_0)\equiv s_0$ for $t \geq t_0$ 
is the unique solution of the following problem{\rm :}
\begin{equation}\label{3.11}
 \frac{{\rm d}\bar{u}(t,s_0)}{{\rm d}t}=g(\bar{u}(t,s_0)),\qquad \bar{u}(t,s_0)|_{t=t_0}=s_0. 
\end{equation}
}

In fact, it is clear that 
$\bar{u}(t,s_0)\equiv s_0$ for $t \geq t_0$ is a solution of problem $\eqref{3.11}$.
For the uniqueness, 
on the contrary,
assume that there exists another solution $\bar{v}(t,s_0)$ for $t \geq t_0$ of problem $\eqref{3.11}$. 
Then there exists $\bar{t}>t_0$ such that $\bar{v}(\bar{t},s_0)\neq s_0$. 
Without loss of the generality, assume that $\bar{v}(\bar{t},s_0)>s_0$. 
Let
\begin{equation*}
\underline{t}=\inf \big\{t \in [t_0,\bar{t}\,]\, : \, \bar{v}(\tau,s_0) > s_0
\quad {\rm for\,\, any}\,\, \tau \in [t,\,\bar{t}\,]\big\}.
\end{equation*}
Since $\bar{v}(t,s_0)$ is continuous with respect to $t$, 
then $\underline{t}<\bar{t}$.
Set $w(t):=\bar{v}(t,s_0)-s_0$. 
From the definition of $\underline{t}$, we have
\begin{equation}\label{3.14}
w(t)>0 \quad\mbox{on $(\underline{t},\,\bar{t}\,]$},\qquad\,\,\, w(\underline{t})=0.
\end{equation}
Then it follows from $\eqref{3.11}$ that
\begin{equation}\label{3.15}
\frac{{\rm d}w(t)}{{\rm d}t}=\frac{g(\bar{v}(t,s_0))}{\bar{v}(t,s_0)-s_0}\, w(t)
\qquad\,\, {\rm for\  any}\  t\in (\underline{t},\,\bar{t}\,].
\end{equation}	
Since $g(u)$ is right-Lipschitz in $u$, it follows from $g(s_0)=0$ that
\begin{equation*}
\frac{g(\bar{v}(t,s_0))}{\bar{v}(t,s_0)-s_0}
=\frac{g(\bar{v}(t,s_0))-g(s_0)}{\bar{v}(t,s_0)-s_0}\leq L
\qquad\,\, {\rm for\ some}\ L>0.
\end{equation*}
Integrating $\eqref{3.15}$ over $[t_1,t_2]\subset(\underline{t},\,\bar{t}\,]$,  
we derive the contradiction from $\eqref{3.14}$ as follows: 
\begin{align*}
0<w(t_2)&=w(t_1) \exp \Big \{ \int_{t_1}^{t_2}\frac{g(\bar{v}(t,s_0))}{\bar{v}(t,s_0)-s_0}
\, {\rm d}t \Big\}
\nonumber\\[1mm]
&
\leq w(t_1) \exp\big\{L(t_2-t_1)\big\}
\longrightarrow 0
\qquad\,\,\, {\rm as}\ t_1 \rightarrow \underline{t}{+}.
\end{align*}

Therefore, for any $s \in \partial A$, 
$\bar{u}\equiv s$ for $t \in \mathbb{R}^+$ is the unique solution of ODE $\eqref{3.3}$.

\smallskip
\noindent
{\bf 2.} We now consider the case that $s\in I_i= (a_i, b_i)$ with $a_i$ and $b_i$ being finite numbers.

\smallskip
From $\eqref{3.7a}$, $g(u)\neq 0$ for $u\in (a_i,b_i)$ and $g(a_i)=g(b_i)=0$.
Since $a_i,b_i \in \partial A$, from {\it Step} 1, 
the value of solution $\bar{u}(t,s)$ with $s\in (a_i,b_i)$ must belong to $ [a_i,b_i]$. 

For each $s \in (a_i,b_i)$, we define
\begin{equation}\label{3.18}
A_i(s)=\int_s^{a_i} \frac{{\rm d}\eta}{g(\eta)},\qquad B_i(s)=\int_s^{b_i} \frac{{\rm d}\eta}{g(\eta)}.
\end{equation}

Due to the sign of $g(u)$ on $(a_i,b_i)$, there are two subcases.
We now prove the subcase that $g(u)>0$ on $(a_i,b_i)$ only, 
since the proof for the other subcase 
is similar.

\smallskip
\noindent
{\bf 2.1.} In this step, we construct the solution $\bar{u}=\bar{u}(t,s)$ of 
ODE $\eqref{3.3}$ on $\mathbb{R}^+ \times (a_i,b_i)$.

\smallskip
From $\eqref{3.4}$, $G_u(u,t,s)=\frac{1}{g(u)}>0$. 
Then $G(u,t,s)$ is strictly increasing with respect to $u \in (a_i,b_i)$. 
Since $G(s,t,s)=-t<0$, then, for any fixed $t>0$ and $s\in (a_i,b_i)$, 
the root $u$ of $G(u,t,s)=0$ must belong to $(s,b_i]$, if it exists. 
Thus, to solve $G(u,t,s)=0$ with respect to $u$, 
it suffices to determine the sign of $G(b_i,t,s)$.
	
Since $g(u)>0$ on $(a_i,b_i)$, then, from $\eqref{3.18}$, 
$B_i(s)$ can be a positive number or $\infty$. 
When $B_i(s)=\infty$, then, for any $t\in \mathbb{R}^+$,
\begin{equation*}
G(b_i,t,s)=B_i(s)-t>0,
\end{equation*}
which implies that, for any $t \in \mathbb{R}^+$, 
there exists a unique $\bar{u} \in (s,b_i)$ such that $G(\bar{u},t,s)=0$. 
Since $G_u(u,t,s)=\frac{1}{g(u)}>0$, applying the implicit function theorem, 
we can immediately integrate $\eqref{3.3}$ to obtain 
the unique solution $\bar{u}=\bar{u}(t,s)$ of ODE $\eqref{3.3}$ 
for $t \in \mathbb{R}^+$, where $\bar{u}=\bar{u}(t,s)$ is uniquely determined 
by $G(\bar{u},t,s)=0$.
	
When $B_i(s)<\infty$, 
if $t<B_i(s)$, then $G(b_i,t,s)=B_i(s)-t>0$. 
Since $G(s,t,s)=-t<0$ and $G_u(u,t,s)=\frac{1}{g(u)}>0$, 
then there exists a unique $\bar{u} \in (s,b_i)$ such that $G(\bar{u},t,s)=0$. 
Thus, applying the implicit function theorem, 
we can immediately integrate $\eqref{3.3}$ to obtain the unique solution 
$\bar{u}=\bar{u}(t,s)$ of $\eqref{3.3}$ on $[0,B_i(s))$.
If $t=B_i(s)$, then $G(b_i,t,s)=0$ so that $\bar{u}(t,s)=b_i$; 
and if $t>B_i(s)$, letting $t_0:=B_i(s)$, then 
$$
\bar{u}\mid_{t=t_0}=\bar{u}(t_0,s)=b_i \in \partial A,
$$
which, by $\eqref{3.11}$, implies that 
the unique solution $\bar{u}(t,s)$ on $[B_i(s),\infty)$ is given by
\begin{equation*}
\bar{u}(t,s)=b_i\qquad\,\,\, {\rm for}\,\, t\geq B_i(s).
\end{equation*}

Up to now,  
the solution $\bar{u}=\bar{u}(t,s)$ of ODE $\eqref{3.3}$ on $\mathbb{R}^+ \times (a_i,b_i)$ has been constructed.

\medskip
\noindent
{\bf 2.2.} In this step, we show the regularity of solution $\bar{u}=\bar{u}(t,s)$ on $\mathbb{R}^+\times (a_i,b_i)$.

\smallskip
{\bf (a).} If $B_i(s)=\infty$, then $\bar{u}=\bar{u}(t,s)$ on $\mathbb{R}^+ \times (a_i,b_i)$ 
is uniquely determined by $G(\bar{u},t,s)=0$. 
Since $g(u)>0$ on $(a_i,b_i)$, then, from $\eqref{3.4}$, 
$G(u,t,s)$ is $C^1$ on $ (a_i,b_i) \times \mathbb{R}^+ \times (a_i,b_i)$. 
By the implicit function theorem, $\bar{u}(t,s)$ is $C^1$ on $\mathbb{R}^+ \times (a_i,b_i)$. 
Furthermore, taking $\partial_s$ act on $G(\bar{u},t,s)=0$, we obtain
\begin{equation*}
\bar{u}_s(t,s)=\frac{g(\bar{u}(t,s))}{g(s)}>0.
\end{equation*}	

{\bf (b).} If $B_i(s)<\infty$, then, from $B'_i(s)=-\frac{1}{g(s)}<0$ on $(a_i,b_i)$, 
$B_i(s)$ is strictly decreasing on $(a_i,b_i)$.
Thus, there exists a curve $t_*(s)=B_i(s)$ defined on $(a_i,b_i)$. 
For $(t,s)\in \mathbb{R}^+ \times (a_i,b_i)$ with $t<t_*(s)$, 
$\bar{u}=\bar{u}(t,s)$ is uniquely determined by $G(\bar{u},t,s)=0$, 
and hence $\bar{u}(t,s)$ is $C^1$. 
For $(t,s)\in \mathbb{R}^+ \times (a_i,b_i)$ with $t>t_*(s)$, $\bar{u}(t,s) \equiv b_i$. 
Therefore, $\bar{u}(t,s)$ is $C^1$, and $\eqref{3.8}$ holds for 
$(t,s)\in \mathbb{R}^+ \times (a_i,b_i)$ with $t<t_*(s)$ or $t>t_*(s)$.

For $(t,s)\in \mathbb{R}^+ \times (a_i,b_i)$ with $t=t_*(s)$, {\it i.e.},
$(t,s)$ lying on the curve $t=t_*(s)$, we first have $\bar{u}(t,s)=b_i$. 
Then, for small $\triangle t$ and $\triangle s$, 
\begin{equation}\label{3.22}
\int_s^{b_i} \frac{{\rm d}\eta}{g(\eta)}=t,\qquad 
\int_{s+\triangle s}^{\bar{u}(t+\triangle t,s+\triangle s)} \frac{{\rm d}\eta}{g(\eta)}=t+\triangle t.
\end{equation}
Denote $\triangle \bar{u}:=\bar{u}(t+\triangle t,s+\triangle s)-\bar{u}(t,s)
=\bar{u}(t+\triangle t,s+\triangle s)-b_i$. 
If $\bar{u}(t+\triangle t,s+\triangle s)=b_i$, then $\triangle \bar{u}=0$.
If $\bar{u}(t+\triangle t,s+\triangle s) \neq b_i$, 
it follows from $\eqref{3.22}$ that 
\begin{equation}\label{3.23}
\triangle \bar{u}=\frac{g(\bar{u}(t,s)+\theta_1\triangle \bar{u})}{g(s+\theta_2 \triangle s)} \triangle s 
+g(\bar{u}(t,s)+\theta_1\triangle \bar{u}) \triangle t,
\end{equation}
where $\theta_i \in(0,1)$ for $i=1,\,2$.
	
Since $g(u)\in C(\mathbb{R})$ and $g(s+\theta_2\triangle s)>0$, 
then the coefficients of $\triangle s$ and $\triangle t$ are both bounded 
for $\triangle t$ and $\triangle s$ small. 
Thus,
$$
\triangle \bar{u} \longrightarrow 0  \qquad \mbox{as $\triangle t,\,\triangle s \rightarrow 0$},
$$ 
which means that $\bar{u}(t,s)$ is continuous on curve $t=t_*(s)$. 
Thus, as $\triangle t, \triangle s \rightarrow 0$,
$$
\frac{g(\bar{u}(t,s)+\theta_1\triangle \bar{u})}{g(s+\theta_2 \triangle s)}\longrightarrow 
\frac{g(\bar{u}(t,s))}{g(s)},
\qquad\,\,\, g(\bar{u}(t,s)+\theta_1\triangle \bar{u})\longrightarrow g(\bar{u}(t,s)).
$$
Then it follows from $\eqref{3.23}$ that
\begin{equation*}
{\rm d}\bar{u}=\frac{g(\bar{u}(t,s))}{g(s)}\,{\rm d}s +g(\bar{u}(t,s))\, {\rm d}t.
\end{equation*}	
This implies that $\bar{u}(t,s)$ is $C^1$ along the curve: $t=t_*(s)$, and satisfies 
$$
\bar{u}_s(t,s)=\frac{g(\bar{u}(t,s))}{g(s)}=\frac{g(b_i)}{g(s)}=0.
$$
	
Therefore, we have proved that $\bar{u}(t,s)\in C^1(\mathbb{R}^+ \times (a_i,b_i))$ 
satisfies $\eqref{3.3}$ and $\eqref{3.8}$.

\medskip
\noindent
{\bf 3.} We now consider the case that $s\in I_i= ({-}\infty,b_i)$ with $b_i$ being a finite number. 

From $\eqref{3.7a}$, $g(u)\neq 0$ for $u\in ({-}\infty,b_i)$ and $g(b_i)=0$.
Since $b_i\in \partial A$, from {\it Step} 1, 
the value of solution $\bar{u}(t,s)$ with $s\in ({-}\infty,b_i)$ must belong to $ ({-}\infty,b_i]$. 
Due to the sign of $g(u)$ on $({-}\infty,b_i)$, there exist two subcases:

\smallskip
\noindent
{\bf 3.1.} Subcase: $g(u)>0$ on $({-}\infty,b_i)$. 

From $\eqref{3.4}$, $G_u(u,t,s)=\frac{1}{g(u)}>0$, 
then $G(u,t,s)$ is strictly increasing with respect to $u \in ({-}\infty,b_i)$. 
Since $G(s,t,s)=-t<0$, then, for any fixed $t>0$ and $s\in ({-}\infty,b_i)$, 
the root $u$ of $G(u,t,s)=0$ must belong to $(s,b_i]$, if it exists. 
Thus, to solve $G(u,t,s)=0$ with respect to $u$, 
it suffices to determine the sign of $G(b_i,t,s)$. 
This follows via replacing $a_i$ by ${-}\infty$ in the proof 
of {\it Step} 2 without any modification.

\smallskip
\noindent
{\bf 3.2.} Subcase: $g(u)<0$ on $({-}\infty,b_i)$. 

From $\eqref{3.4}$, $G_u(u,t,s)=\frac{1}{g(u)}<0$, 
then $G(u,t,s)$ is strictly decreasing with respect to $u\in ({-}\infty,b_i)$. 
Since $G(s,t,s)=-t<0$, then, for any fixed $t>0$ and $s\in ({-}\infty,b_i)$, 
the root $u$ of $G(u,t,s)=0$ must belong to $({-}\infty,s)$, if it exists. 
Thus, to solve $G(u,t,s)=0$ with respect to $u$, 
it suffices to determine the sign of $G({-}\infty,t,s)$.

From $\eqref{conhg}$, 
\begin{equation*}
0\leq \mathop{\overline{\lim}}\limits_{\eta \rightarrow {-}\infty}
\frac{g(\eta)}{\eta}=\mathop{\overline{\lim}}\limits_{\eta \rightarrow {-}\infty}
\frac{|g(\eta)|}{|\eta|}=\lim_{\eta \rightarrow {-}\infty}
\sup_{v\in ({-}\infty,\eta]}\frac{|g(v)|}{|v|}<\infty,
\end{equation*}
which means that there exists $C_->0$ such that 
$0<\frac{|g(\eta)|}{|\eta|}\leq C_-$ for any $\eta<b_i$. 
Then, for any given $s\in ({-}\infty,b_i)$,
\begin{equation*}
\int^u_s \frac{{\rm d}\eta}{g(\eta)}=\int^s_u \frac{{\rm d}\eta}{|g(\eta)|}\geq
\int^{\min\{-1,s\}}_u \frac{{\rm d}\eta}{C_- |\eta|} \geq\frac{1}{C_-}
(\ln |u|-\ln |s|)\rightarrow \infty\qquad \mbox{as $u\rightarrow {-}\infty$},
\end{equation*}
which implies that, for any $t\in \mathbb{R}^+$ and $s\in ({-}\infty,b_i)$,
\begin{equation}\label{3.33}
G({-}\infty,t,s)=\lim_{u \rightarrow {-}\infty}\int^{u}_s \frac{{\rm d}\eta}{g(\eta)}-t=\infty.
\end{equation}
Since $G_u(u,t,s)=\frac{1}{g(u)}<0$ and $G(s,t,s)<0$, 
it follows from $\eqref{3.33}$ that, 
for any given $t\in \mathbb{R}^+$ and $s\in ({-}\infty,b_i)$, 
there exists a unique $\bar{u}\in ({-}\infty,s)$ such that $G(\bar{u},t,s)=0$. 
Thus, applying the implicit function theorem, 
we can immediately integrate $\eqref{3.3}$ to obtain the unique solution 
$\bar{u}=\bar{u}(t,s)$ of ODE $\eqref{3.3}$ for $t \in \mathbb{R}^+$, 
and $\bar{u}(t,s)$ is $C^1$ on $\mathbb{R}^+ \times ({-}\infty,b_i)$. 
Furthermore, taking $\partial_s$ on $G(\bar{u},t,s)=0$ lead to $\eqref{3.8}$.

Therefore, we have proved that the unique solution $\bar{u}(t,s)$ of ODE $\eqref{3.3}$ 
is $C^1$ on $\mathbb{R}^+ \times ({-}\infty,b_i)$ and satisfies $\eqref{3.8}$.

\medskip
\noindent
{\bf 4.} We now consider the case that $s\in I_i= (a_i, \infty)$ with $a_i$ being a finite number. 

\smallskip
From $\eqref{3.7a}$, $g(u)\neq 0$ for $u\in (a_i,\infty)$ and $g(a_i)=0$.
Since $a_i\in \partial A$, from {\it Step} 1, 
the value of solution $\bar{u}(t,s)$ with $s\in (a_i, \infty)$ must belong to $ [a_i,\infty)$. 
Due to the sign of $g(u)$ on $(a_i,\infty)$, there are two subcases:

\smallskip
\noindent
{\bf 4.1.} Subcase: $g(u)>0$ on $(a_i,\infty)$. 

From $\eqref{3.4}$, $G_u(u,t,s)=\frac{1}{g(u)}>0$, 
then $G(u,t,s)$ is strictly increasing with respect to $u \in (a_i,\infty)$. 
Since $G(s,t,s)=-t<0$, then, for any fixed $t>0$ and $s\in (a_i,\infty)$, 
the root $u$ of $G(u,t,s)=0$ must belong to $(s,\infty)$, if it exists. 
Thus, to solve $G(u,t,s)=0$ with respect to $u$, 
it suffices to determine the sign of $G(\infty,t,s)$.

From $\eqref{conhg}$, 
$$
0\leq \mathop{\overline{\lim}}\limits_{\eta \rightarrow \infty}
\frac{g(\eta)}{\eta}=\lim_{\eta \rightarrow \infty}
\sup_{v\in [\eta,\infty)}\frac{g(v)}{v}<\infty,
$$
which means that there exists $C_+>0$ such that, 
$0<\frac{g(\eta)}{\eta}\leq C_+$ for any $\eta>a_i$. 
Then, for any given $s\in (a_i,\infty)$,
\begin{equation*}
\int^u_s \frac{{\rm d}\eta}{g(\eta)}\geq
\int^u_{\max\{1,s\}} \frac{{\rm d}\eta}{C_+ \eta} \geq\frac{1}{C_+}
(\ln |u|-\ln |s|)\rightarrow \infty\qquad \mbox{\rm as $u\rightarrow \infty$},
\end{equation*}
which implies that, 
for any $t\in \mathbb{R}^+$ and $s\in (a_i,\infty)$,
\begin{equation}\label{3.30}
G(\infty,t,s)=\lim_{u \rightarrow \infty}\int^{u}_s \frac{{\rm d}\eta}{g(\eta)}-t=\infty.
\end{equation}
Since $G_u(u,t,s)=\frac{1}{g(u)}>0$ and $G(s,t,s)<0$, it follows from $\eqref{3.30}$ that, 
for any given $t \in \mathbb{R}^+$ and $s\in (a_i,\infty)$, 
there exists a unique $\bar{u}\in (s,\infty)$ such that $G(\bar{u},t,s)=0$.
Thus, applying the implicit function theorem, 
we can immediately integrate $\eqref{3.3}$ to obtain the unique solution 
$\bar{u}=\bar{u}(t,s)$ of ODE $\eqref{3.3}$ for $t \in \mathbb{R}^+$, 
and $\bar{u}(t,s)$ is $C^1$ on $\mathbb{R}^+ \times (a_i,\infty)$. 
Furthermore, by taking $\partial_s$ act on $G(\bar{u},t,s)=0$, 
$\eqref{3.8}$ follows.

\medskip
\noindent
{\bf 4.2.} Subcase: $g(u)<0$ on $(a_i,\infty)$. 

\smallskip
From $\eqref{3.4}$, $G_u(u,t,s)=\frac{1}{g(u)}<0$, 
then $G(u,t,s)$ is strictly decreasing with respect to $u \in (a_i,\infty)$. 
Since $G(s,t,s)=-t<0$, then, for any fixed $t>0$ and $s\in (a_i,\infty)$, 
the root $u$ of $G(u,t,s)=0$ must belong to $[a_i,s)$, if it exists. 
Thus, to solve $G(u,t,s)=0$ with respect to $u$, 
it suffices to determine the sign of $G(a_i,t,s)$. 
This follows via replacing $b_i$ by $\infty$ in {\it Step} 2.

Therefore, we have proved that the unique solution $\bar{u}(t,s)$ of ODE $\eqref{3.3}$ 
is $C^1$ on $\mathbb{R}^+ \times (a_i,\infty)$ and satisfies $\eqref{3.8}$.

\medskip
\noindent
{\bf 5.} We now consider the case that $s\in I_i= ({-}\infty,\infty)$.

In this case, $A=\emptyset$ and hence $g(u)$ keeps the sign for all $u \in \mathbb{R}$.
In fact, for any fixed $s\in\mathbb{R}$, 
if $g(u)>0$, then 
$$
G(\infty,t,s)>0,\qquad G(s,t,s)<0,\qquad G_u(u,t,s)>0;
$$ 
and, if $g(u)<0$, then 
$$
G({-}\infty,t,s)>0,\qquad G(s,t,s)<0,\qquad G_u(u,t,s)<0.
$$
By the same arguments in {\it Step} 3--4, it can be checked that 
the unique solution $\bar{u}(t,s)$ of ODE $\eqref{3.3}$ 
is $C^1$ on $\mathbb{R}^+ \times \mathbb{R}$ and satisfies $\eqref{3.8}$.

\medskip
\noindent
{\bf 6.} Based on {\it Steps} 1--5, we have proved that, for any $s \in \mathbb{R}$, 
there exists a unique solution $\bar{u}(t,s)$ of ODE $\eqref{3.3}$ 
for $t \in \mathbb{R}^+$ 
such that $\bar{u}(t,s)\in C^1(\mathbb{R}^+ \times (\mathbb{R}\setminus \partial A))$ and satisfies $\eqref{3.8}$ and $\eqref{3.9}$.
Thus, to complete the proof, it suffices to show $\eqref{3.10}$.

Suppose that $s_0 \in \partial A$. Then $g(s_0) = 0$ and $\bar{u}(t,s_0)\equiv s_0$. 
Denote $v(t,\triangle s)$ by
\begin{equation}\label{3.36a}
v(t,\triangle s):= \bar{u}(t,s_0+\triangle s)- \bar{u}(t,s_0)=\bar{u}(t,s_0+\triangle s)- s_0.
\end{equation}
Then it follows from $\eqref{3.11}$ in {\it Step} 1 that, for any fixed $t_0>0$, 
\begin{itemize}
\item[(a).] If $v(t_0,\triangle s)=0$, then $v(t,\triangle s)=0$ on $t \geq t_0$;

\smallskip
\item[(b).] If $v(t_0,\triangle s)\neq 0$, then $v(t,\triangle s)\neq 0$ on $t\leq t_0$. 
\end{itemize}
Thus, if $v(t_0,\triangle s)\neq 0$, it follows from $\eqref{3.36a}$ that, 
for $t\leq t_0$, 
\begin{equation*}
\frac{{\rm d}v(t,\triangle s)}{{\rm d}t}
=\frac{g(\bar{u}(t,s_0+\triangle s))}{\bar{u}(t,s_0+\triangle s)- s_0}\, v(t,\triangle s),
\qquad\,\,\,  v(0,\triangle s)=\triangle s.
\end{equation*}
This implies that 
\begin{equation}\label{3.38}
v(t_0,\triangle s)=\triangle s \,
\exp\Big\{\int_0^{t_0}\frac{g(\bar{u}(t,s_0+\triangle s))}
{\bar{u}(t,s_0+\triangle s)- s_0}\,{\rm d}t\Big\}.
\end{equation}
Since $\bar{u}(t,s)$ is locally bounded and $g(u)\in C(\mathbb{R})$ is right-Lipschitz in $u$, it follows from $\eqref{3.38}$ that 
\begin{equation*}
0 \leq \frac{v(t_0,\triangle s)}{\triangle s} \leq e^{Lt_0}
\qquad \mbox{\rm for\ some $L>0$}.
\end{equation*}
This implies that, for any $t_0 \in \mathbb{R}^+$, 
$\bar{u}(t_0,\cdot)$ is Lipschitz continuous in $s$ at $s_0 \in \partial A$.
	
Since $\bar{u}(t,s)$ solves ODE $\eqref{3.3}$ for any $s\in\mathbb{R}$, 
\begin{align*}
&\ |\bar{u}(t_0+\triangle t,s_0+\triangle s)-\bar{u}(t_0,s_0)| \nonumber\\[1mm]
&\leq |\bar{u}(t_0+\triangle t,s_0+\triangle s)-\bar{u}(t_0,s_0+\triangle s)|
+|\bar{u}(t_0,s_0+\triangle s)-\bar{u}(t_0,s_0)|\nonumber\\[1mm]
&\longrightarrow 0\qquad\,\,\, 
{\rm as}\,\, \triangle t,\, \triangle s\rightarrow 0.
\end{align*}
By the arbitrariness of $t_0 \in \mathbb{R}^+$ and  $s_0 \in \partial A$, this implies that
$\bar{u}(t,s)$ is continuous at any point $(t,s)\in\mathbb{R}^+ \times \partial A$. 
Thus, $\bar{u}(t,s)\in C(\mathbb{R}^+ \times \mathbb{R})$.

Furthermore, because $\bar{u}(t,s)\in C(\mathbb{R}^+ \times \mathbb{R})$, for any $s_0 \in \partial A$, 
$$\mathop{\lim}\limits_{\triangle s \rightarrow 0} \bar{u}(t,s_0+\triangle s)=\bar{u}(t,s_0)= s_0.
$$ 
By $\eqref{3.38}$, this implies that, for any $t>0$, 
$$
\mathop{\lim}\limits_{\triangle s \rightarrow 0}\frac{ v(t,\triangle s)}{\triangle s}\ {\rm exists}
\qquad {\rm if\ and\ only\ if}\qquad 
\mathop{\lim}\limits_{s \rightarrow s_0} \frac{g(s)}{s- s_0}\ {\rm exists\ or\ equals}\ {-}\infty,
$$
so that 
\begin{align*}
\bar{u}_s(t,s_0)=e^{g'(s_0)t}\qquad\,\,\, {\rm if}\ g'(s_0)\ {\rm exists\ or\ equals}\ {-}\infty.
\end{align*}
Therefore, for any $t>0$ and $s_0 \in \mathring {\partial} A$,
\begin{align*}
\triangle \bar{u}&=\frac{\bar{u}(t+\triangle t,s_0+\triangle s)-\bar{u}(t,s_0+\triangle s)}{\triangle t}\, \triangle t
 +\frac{\bar{u}(t,s_0+\triangle s)-\bar{u}(t,s_0)}{\triangle s} \, \triangle s\nonumber\\
&=\big(g(\bar{u}(t,s_0+\triangle s))+o(1)\big){\triangle t}+\big(e^{g'(s_0)t}+o(1)\big)\triangle s
\nonumber\\
&=\big(g(\bar{u}(t,s_0))+o(1)\big){\triangle t}+\big(e^{g'(s_0)t}+o(1)\big)\triangle s\nonumber\\[1mm]
&=e^{g'(s_0)t}\triangle s +o(1)(|\triangle t|+|\triangle s|).
\end{align*}	
Thus, $\bar{u}(t,s)$ is differentiable at $(t,s)\in\mathbb{R}^+ \times \{s_0\}$ for $s_0 \in \mathring {\partial} A$.

Therefore, $\bar{u}(t,s)\in C(\mathbb{R}^+ \times \mathbb{R})$ satisfies (i)--(iii) 
and is $C^1$ on $\mathbb{R}^+ \times (\mathbb{R}\setminus (\partial A-\mathring {\partial}A))$. 
This completes the proof of Lemma $\ref{lem:3.1}$.
\end{proof}

\section{Riemann Solutions with Non-Selfsimilar Structure: Proof of Theorem $\ref{th:4.4}$}

The proof of Theorem $\ref{th:4.4}$ is divided into three steps.

\medskip
\noindent
{\bf 1.} We first have the following lemma to determine the types of the basic waves, 
{\it i.e.}, shock waves and rarefaction waves, 
emitting from the initial discontinuity $M(x)=0$.

\begin{lemma}\label{lem:4.1}
Under the assumptions in {\rm Theorem} $\ref{th:4.4}$, the following properties hold{\rm :}
\begin{itemize}
\item [{\rm (i)}] If $u_->u_+$, the solution of the Riemann problem $\eqref{eq1.1}$--$\eqref{eq1.2}$ 
can not be continuous{\rm ;}

\item [{\rm (ii)}] If $u_-<u_+$, the characteristics of the Riemann problem $\eqref{eq1.1}$--$\eqref{eq1.2}$ 
do not intersect.
\end{itemize}
\end{lemma}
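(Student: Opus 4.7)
The plan is to exploit Condition $(\mathcal{H})$ together with the properties of the ODE flow $\bar{u}(t,s)$ from Lemma $\ref{lem:3.1}$ to control the characteristic geometry in both cases, phrasing everything in terms of the characteristic map $x^0\mapsto x^0+\chi(t,s)$ with $s$ the initial value.

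For part (i), I argue by contradiction. Assume $u$ is a continuous solution, so that the method of characteristics yields $u(t,x^0+\chi(t,u_\pm))=\bar{u}(t,u_\pm)$ along the characteristic emanating from any $x^0$ with $M(x^0)\neq 0$. The goal is to exhibit a collision between two such characteristics, one from each side of the initial discontinuity. Given any $x^0\in\{M=0\}$, set $\vec n:=\nabla M(x^0)/|\nabla M(x^0)|$, pick a small $\delta>0$, take $x^0_2:=x^0+\delta\vec n\in\{M>0\}$, and define $x^0_1:=x^0_2+\chi(t,u_+)-\chi(t,u_-)$, so that the two characteristics coincide at time $t$ by construction. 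Taylor expanding $M$ around $x^0$,
\begin{equation*}
M(x^0_1)=\delta|\nabla M(x^0)|+M_{x_i}(x^0)\big(\chi_i(t,u_+)-\chi_i(t,u_-)\big)+o(\delta+t),
\end{equation*}
and by Condition $(\mathcal{H})$ with $u_->u_+$ the middle term equals $-t\int_{u_+}^{u_-}M_{x_i}(x^0)f''_i(u)\,{\rm d}u+o(t)$, strictly negative for small $t>0$. Choosing $\delta$ strictly smaller than the absolute value of that negative quantity divided by $|\nabla M(x^0)|$ yields $M(x^0_1)<0$, hence a genuine crossing. Continuity of $u$ then forces $\bar u(t,u_-)=\bar u(t,u_+)$, which contradicts the continuity of $\bar u(t,\cdot)$ guaranteed by Lemma $\ref{lem:3.1}$ together with $u_-\neq u_+$ when $t>0$ is sufficiently small.

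For part (ii), I show that no two characteristics of the Riemann problem meet at $t>0$. Characteristics issuing from the same half-space $\{M\lessgtr 0\}$ are mutual translates and hence disjoint, so only a crossing between $x^0_1\in\{M<0\}$ and $x^0_2\in\{M>0\}$ must be ruled out. Setting $F(t,x,c):=M(x-\chi(t,c))$, such a crossing at $(t,x)$ would give $F(t,x,u_-)=M(x^0_1)<0<M(x^0_2)=F(t,x,u_+)$; since $u_-<u_+$, let $c^*\in(u_-,u_+)$ be the smallest $c$ with $F(t,x,c)=0$. Then $y_0:=x-\chi(t,c^*)\in\{M=0\}$ and $\bar u(\tau,c^*)\in(a,b)$ for $\tau\in[0,t]$ by the definition of $(a,b)$. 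Differentiating $F$ with the aid of Lemma $\ref{lem:3.1}$(ii)--(iii),
\begin{equation*}
\partial_c F(t,x,c^*)=-\int_0^t M_{x_i}(y_0)f''_i(\bar u(\tau,c^*))\,\bar u_s(\tau,c^*)\,{\rm d}\tau,
\end{equation*}
and Condition $(\mathcal{H})$ at $y_0$ combined with $\bar u_s\geq 0$ from Lemma $\ref{lem:3.1}$(ii) gives $\partial_cF(t,x,c^*)\leq 0$; a case analysis over whether $c^*$ lies in $\mathbb{R}\setminus A$, ${\rm int}\,A$, or $\mathring{\partial}A$, using the formulas \eqref{3.8}--\eqref{3.10}, promotes this bound to strict negativity. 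On the other hand, the minimality of $c^*$ with $F(t,x,u_-)<0=F(t,x,c^*)$ forces $\partial_cF(t,x,c^*)\geq 0$, a contradiction.

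The main obstacle I anticipate is establishing the strict inequality $\partial_cF(t,x,c^*)<0$, which degenerates when $c^*$ lies in the subtle set $\partial A\setminus\mathring{\partial}A$ where $\bar u(t,\cdot)$ is merely Lipschitz, and in subcases where $\bar u(\tau,c^*)$ is pinned to a boundary point of $A$ for large $\tau$ so that $\bar u_s(\tau,c^*)=0$ there. Overcoming this requires a careful invocation of \eqref{3.8}--\eqref{3.10}, possibly replacing $\partial_cF$ by the appropriate one-sided derivative at $c^*$. A related minor technical point is that the Taylor expansion in part (i) must be uniform in $t$ small, which follows from the $C^2$ regularity of the $f_i$ and the continuity of $\bar u(\cdot,u_\pm)$ on $[0,T]$ supplied by Lemma $\ref{lem:3.1}$.
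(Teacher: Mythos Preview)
Your approach is largely sound, and for part (ii) it follows the same strategy as the paper---reducing everything to a monotonicity property of the auxiliary function $F(t,x,c)=M(x-\chi(t,c))$ near its roots. The paper, however, sidesteps the differentiability obstacle you correctly flag by never computing $\partial_c F$: instead of differentiating in $c$, it bounds the finite difference $F(t,x,s)-F(t,x,s_0)$ directly. Applying the mean value theorem to $f'_i$ (not to $\bar u$) gives
\[
\chi_i(t,s)-\chi_i(t,s_0)=\int_0^t f''_i(\xi(\tau))\big(\bar u(\tau,s)-\bar u(\tau,s_0)\big)\,{\rm d}\tau,
\]
and the strict sign then follows from Condition $(\mathcal{H})$ together with the elementary observation that $\bar u(\tau,s)-\bar u(\tau,s_0)>0$ for $\tau$ in some interval $(0,\epsilon)$, simply by continuity of $\bar u$ and the initial values $\bar u(0,s)=s>s_0=\bar u(0,s_0)$. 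This works regardless of whether $c^*$ lies in $\partial A\setminus\mathring{\partial}A$ or whether $\bar u_s(\tau,c^*)$ vanishes for large $\tau$; no case analysis on \eqref{3.8}--\eqref{3.10} is needed. Your one-sided derivative idea could eventually close the gap, but the finite-difference argument is both simpler and more robust.

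For part (i), your explicit construction of a collision is a legitimate alternative to the paper's argument. The paper instead proceeds by contradiction: assuming $\Omega_-\cap\Omega_+=\emptyset$, it picks $(t,x)\notin\Omega_-\cup\Omega_+$, obtains $Q(u_-)\geq 0\geq Q(u_+)$ with $u_->u_+$, and rules this out via the same monotone-crossing claim. That route has the advantage of reusing a single lemma for both (i) and (ii), whereas your construction in (i) requires a separate small-time Taylor estimate. Either way works; just be careful in your version that the remainder is $o(\delta+t)$ only because $M\in C^1$ and $|\chi(t,u_+)-\chi(t,u_-)|=O(t)$, so you should fix $t$ small first (to make the dominant negative term beat the $o(t)$) and then choose $\delta$ of order $t$.
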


\begin{proof} If $(t,x)=(t,x_1,x_2,\cdots,x_n)$ for $t>0$ is a point 
on the characteristic emitting from the region $\pm M(x)>0$, 
then, by $\eqref{4.1}$, there exists a unique point $x^0=(x^0_1,x^0_2,\cdots,x^0_n)$ 
with $\pm M(x^0_1,x^0_2,\cdots,x^0_n)>0$ such that
$$
x_i(t)=x^0_i+\chi_i(t,u_\pm) \qquad {\rm for}\,\, i=1,2,\cdots,n.
$$
Thus,
\begin{equation*}
\pm M(x_1-\chi_1(t,u_\pm),x_2-\chi_2(t,u_\pm),\cdots,x_n-\chi_n(t,u_\pm))>0.
\end{equation*}
Then the characteristic families emitting from the regions
$M(x)<0$ and $M(x)>0$ cover the spaces $\Omega_-$ and $\Omega_+$, respectively,	
where $\Omega_-$ and $\Omega_+$ are given by
\begin{equation}\label{4.6}
\Omega_-=\big\{(t,x)\, :\, M(x_1-\chi_1(t,u_-),x_2-\chi_2(t,u_-),\cdots,
x_n-\chi_n(t,u_-))<0,\,\, t>0\big\},
\end{equation}
and
\begin{equation}\label{4.7}
\Omega_+=\big\{(t,x)\, :\, M(x_1-\chi_1(t,u_+),x_2-\chi_2(t,u_+),\cdots,
x_n-\chi_n(t,u_+))>0,\,\, t>0\big\}.
\end{equation}

For any fixed $(t,x)$ with $t>0$, we define $Q(s)$ by
\begin{equation}\label{4.8}
	Q(s):=M(x_1-\chi_1(t,s),x_2-\chi_2(t,s),\cdots,x_n-\chi_n(t,s)).
\end{equation}
We claim: 
{\it If $Q(s_0)=0$ for some $s_0$ lying between $u_-$ and $u_+$, then 
\begin{align}\label{Qs0}
(s-s_0)Q(s)<0 \qquad \mbox{ for small  $|s-s_0|>0$ with $s$ lying between $u_-$ and $u_+$}.
\end{align}
}

Without loss of generality, assume that $s-s_0>0$ is small. 
By Lemma $\ref{lem:3.1}$, $\bar {u}(t,\cdot)$ is a nondecreasing Lipschitz continuous function in $s$.
Then $\bar {u}(t,s_0)\leq \bar {u}(t,s)$ for $t\geq 0$. 
Thus, by the continuity of $\bar{u}(t,s)$ and $\bar{u}(0,s)=s$,  
there exists $\epsilon >0$ such that, for any $t \in (0,\epsilon)$,
\begin{equation}\label{4.9}
\bar {u}(t,s_0)<\frac{s_0+s}{2}<\bar {u}(t,s).
\end{equation}
Noticing that $Q(s_0)=0$, it follows from {\it Condition} $(\mathcal{H})$, $\eqref{4.1}$, 
and $\eqref{4.9}$ that 
\begin{align}\label{4.10}
Q(s)
&=-\big(M_{x_i}(x^0(s_0))+o(1)\big)\,\big(\chi_i(t,s)-\chi_i(t,s_0)\big)\nonumber\\
& =-\big(M_{x_i}(x^0(s_0))+o(1)\big)\,
\int^t_0 \big(f'_i(\bar{u}(\tau,s))-f'_i(\bar{u}(\tau,s_0))\big)\, {\rm d}\tau\nonumber\\
& =-(1+o(1))\int^t_0
M_{x_i}(x^0(s_0))\, f''_i(\xi(\tau))\big(\bar{u}(\tau,s))-\bar{u}(\tau,s_0)\big)\, {\rm d}\tau<0,
\end{align}
where $\xi(\tau)\in (a,b)$ and $x^0(s_0):=(\cdots,x_i-\chi_i(t,s_0),\cdots)$ satisfies $M(x^0(s_0))=Q(s_0)=0$.
Thus, the claim follows.

\smallskip
(i)\, For the case that $u_->u_+$, it suffices to show that 
$\Omega_-\cap \Omega_+\not=\emptyset$, {\it i.e.}, 
the characteristics emitting from regions $M(x)<0$ and $M(x)>0$, respectively, 
intersect each other.
	
On the contrary, assume that $\Omega_-\cap \Omega_+=\emptyset$. 
Since $\Omega_-$ and $\Omega_+$ are both open sets in $\mathbb{R}^+ \times \mathbb{R}^n$, 
there exists a point $(t,x)\in \mathbb{R}^+ \times \mathbb{R}^n$ such that 
$(t,x)\not\in \Omega_-\cup \Omega_+$, {\it i.e.}, 
\begin{equation*}
M(x_1-\chi_1(t,u_-),x_2-\chi_2(t,u_-),\cdots,x_n-\chi_n(t,u_-))\geq 0,
\end{equation*}
and
\begin{equation*}
M(x_1-\chi_1(t,u_+),x_2-\chi_2(t,u_+),\cdots,x_n-\chi_n(t,u_+))\leq 0.
\end{equation*}
By $\eqref{4.8}$, for such a point $(t,x)$, $Q(u_-) \geq 0$ and $Q(u_+) \leq 0$.  
This contradicts to $\eqref{Qs0}$. 
Therefore, $\Omega_-\cap \Omega_+ \neq \emptyset$ and then the solution is discontinuous.

 \smallskip
(ii)\, For the case that $u_-<u_+$, it suffices to show that $\Omega_-\cap \Omega_+ =\emptyset$.	

On the contrary, assume that $\Omega_-\cap \Omega_+ \not=\emptyset$. 
Then there exists a point $(t,x)\in \Omega_-\cap \Omega_+$ such that $Q(u_-)<0$ and $Q(u_+)>0$, 
which contradicts to $\eqref{Qs0}$. 
Therefore, $\Omega_-\cap \Omega_+=\emptyset$. 
Thus, the characteristics emitting from regions $M(x)<0$ and $M(x)>0$, respectively, 
do not intersect each other.
\end{proof}

\smallskip
\noindent
{\bf 2.} We now prove the case of shock waves in the following lemma:

\begin{lemma}\label{lem:4.2}
Under the assumptions in {\rm Theorem} $\ref{th:4.4}$,
if $u_->u_+$, the solution of the Riemann problem $\eqref{eq1.1}$--$\eqref{eq1.2}$ 
is a shock wave with non-selfsimilar structure given by $\eqref{4.13}$--$\eqref{4.14}$, 
{\it i.e.}, 
\begin{equation*}
u(t,x)=
\begin{cases}
 \bar{u}(t,u_-) \,\,\,&{\rm if}\ M(x_1-[\chi_1](t),x_2-[\chi_2](t),\cdots,x_n-[\chi_n](t))<0,
\\[1mm]
 \bar{u}(t,u_+) \,\,\,&{\rm if}\ M(x_1-[\chi_1](t),x_2-[\chi_2](t),\cdots,x_n-[\chi_n](t))>0,
\end{cases}
\end{equation*}
and the discontinuity $S(t,x)=0$, which is given by
\begin{equation*}
M(x_1-[\chi_1](t),x_2-[\chi_2](t),\cdots,x_n-[\chi_n](t))=0,
\end{equation*}
satisfies the Rankine-Hugoniot condition $\eqref{2.3}$ and the geometric entropy conditions $\eqref{2.4}$--$\eqref{2.5}$.
\end{lemma}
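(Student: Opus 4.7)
My plan is to verify the three required items in turn: that the piecewise-defined $u(t,x)$ in $\eqref{4.13}$ solves equation $\eqref{eq1.1}$ classically in each of the two regions carved out by the shock surface $\eqref{4.14}$, that its traces satisfy the Rankine-Hugoniot condition $\eqref{2.3}$ across that surface, and that they satisfy the geometric entropy conditions $\eqref{2.4}$--$\eqref{2.5}$. Let me set $S(t,x):=M(x_1-[\chi_1](t),\ldots,x_n-[\chi_n](t))$. Since $M\in C^1(\mathbb{R}^n)$ with $\nabla M\neq 0$ on $\{M=0\}$ and each $[\chi_i]$ is $C^1$ by $\eqref{4.3}$, the zero set $\{S=0\}$ is a smooth hypersurface separating spacetime into the two regions of $\eqref{4.6}$--$\eqref{4.7}$. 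In either region $u(t,x)=\bar u(t,u_\pm)$ is $x$-independent, so $\eqref{eq1.1}$ reduces to $\tfrac{d}{dt}\bar u(t,u_\pm)=g(\bar u(t,u_\pm))$, which is precisely ODE $\eqref{3.3}$ from Lemma $\ref{lem:3.1}$(i).

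For the Rankine-Hugoniot condition, the chain rule gives $S_t=-M_{x_i}\,[\chi_i]'(t)=-M_{x_i}\,\tfrac{[f_i]}{[u]}_\pm(t)$ and $S_{x_i}=M_{x_i}$, all evaluated at $(x_1-[\chi_1](t),\ldots,x_n-[\chi_n](t))$. Condition $\eqref{2.3}$ is equivalent to $S_t[u]+S_{x_i}[f_i]=0$, which rearranges to $M_{x_i}\bigl([f_i]-[u]\,\tfrac{[f_i]}{[u]}_\pm\bigr)=0$. When $\bar u(t,u_-)\neq\bar u(t,u_+)$, each bracket vanishes individually by the very definition $\eqref{4.4}$; when they coincide, both $[u]$ and $[f_i]$ vanish, so the identity is trivial.

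For the geometric entropy condition I would introduce
\[
\phi(k):=\vec n\cdot\bigl(k-u_l,\ f_1(k)-f_1(u_l),\ \ldots,\ f_n(k)-f_n(u_l)\bigr),
\]
where $u_l=\bar u(t,u_-)$ and $u_r=\bar u(t,u_+)$, so that $u_l>u_r$ by the monotonicity in the seed from Lemma $\ref{lem:3.1}$(ii), and where $\vec n$ is the unit normal at the shock point pointing into $\Omega_-$, equivalently $\vec n=-\nabla_{t,x}S/|\nabla_{t,x}S|$. Then $\phi(u_l)=0$ trivially, and $\phi(u_r)=-\vec n\cdot\bigl([u],[f_1],\ldots,[f_n]\bigr)=0$ by the Rankine-Hugoniot identity just verified. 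The key computation is
\[
\phi''(k)\;=\;\sum_{i=1}^n n_i\,f_i''(k)\;=\;-\,|\nabla_{t,x}S|^{-1}\,M_{x_i}(x^\ast)\,f_i''(k)\;=\;-\,|\nabla_{t,x}S|^{-1}\,H(x^\ast,k),
\]
where $x^\ast:=(x_1-[\chi_1](t),\ldots,x_n-[\chi_n](t))$ satisfies $M(x^\ast)=0$ on the shock surface. Condition $(\mathcal H)$ together with $[u_r,u_l]\subseteq(a,b)$ from $\eqref{2.7}$ then forces $\phi''(k)<0$ on $(u_r,u_l)$, so $\phi$ is concave with zeros at both endpoints, whence $\phi\geq 0$ throughout $[u_r,u_l]$; this is exactly $\eqref{2.4}$. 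The equivalent form $\eqref{2.5}$ will follow from $\eqref{2.4}$ via the Rankine-Hugoniot identity.

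The main technical subtlety I anticipate is bookkeeping the orientation of $\vec n$: I must check that the $u_l$-side coincides with $\Omega_-=\{S<0\}$, so that the spatial components of $\vec n$ pick up the minus sign $-M_{x_i}/|\nabla_{t,x}S|$. This sign is precisely what converts the positivity $H>0$ of Condition $(\mathcal H)$ into the concavity $\phi''<0$ required by the entropy condition; it is the single step where Condition $(\mathcal H)$ is essentially used, and the remark after Theorem $\ref{th:4.4}$ on the case $H<0$ (where one reverses $M$) provides a useful consistency check.
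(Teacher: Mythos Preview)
Your proposal is correct and follows the same overall structure as the paper: compute $S_t, S_{x_i}$ from $\eqref{4.14}$, identify the orientation of $\vec n$, verify Rankine--Hugoniot, then use Condition~$(\mathcal H)$ to check the geometric entropy conditions. The Rankine--Hugoniot step is identical to the paper's.

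Where you differ is in the entropy verification. The paper introduces an intermediate point $\xi_0\in[u_r,u_l]$ via the mean value theorem applied to $M_{x_i}(x^0)f_i$, splits into the two cases $k\geq\xi_0$ and $k\leq\xi_0$, and in each case applies the mean value theorem again together with $H(x^0,\cdot)>0$ to obtain the required sign. Your argument instead packages the same computation as a concavity statement: with $\phi(u_l)=\phi(u_r)=0$ and $\phi''(k)=-|\nabla_{t,x}S|^{-1}H(x^\ast,k)<0$, strict concavity forces $\phi\geq 0$ on $[u_r,u_l]$. This is cleaner and avoids the case split, though it is the same idea underneath---both routes reduce the entropy inequality to the sign of $M_{x_i}(x^\ast)f_i''$ furnished by Condition~$(\mathcal H)$. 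One small point: Lemma~\ref{lem:3.1}(ii) only gives $\bar u(t,\cdot)$ non-decreasing, so $u_l\geq u_r$; when equality holds the discontinuity has vanished (cf.\ Remark~\ref{rem:6.1}) and there is nothing to check, which the paper also handles by restricting to the times where $u_l>u_r$.
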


\begin{proof} Firstly, it is direct to check from $\eqref{4.14}$ that
\begin{equation*}
S_t=-M_{x_i}(x^0)\frac{[f_i]_\pm}{[u]_\pm}
(t),\quad
S_{x_i}=M_{x_i}(x^0)\qquad\,\, {\rm for}\,\,i=1,2,\cdots,n,
\end{equation*}
where $x^0=(x_1-[\chi_1](t),\cdots,x_n-[\chi_n](t))$.

Choose the unit normal vector $\vec{n}$ of the discontinuity $S(t,x)=0$ as
\begin{equation*}
\vec{n}= \alpha\, (S_t,S_{x_1},\cdots,S_{x_n}),
\end{equation*}
where $\alpha =-\big((S_t)^2+ \sum(S_{x_i})^2\big)^{-\frac{1}{2}}<0$. 
Then 
$$
\frac{\partial}{\partial \vec{n}}S(t,x)=\vec{ n} \cdot(S_t,S_{x_1},\cdots,S_{x_n})
=\frac{1}{\alpha}<0.
$$
Thus, the unit normal vector $\vec{n}$ points from region $S(t,x)>0$ to region $S(t,x)<0$, {\it i.e.},  
from the side of $\bar{u}(t,u_+)$ to the side of $\bar{u}(t,u_-)$.
Then it is direct to see that $u(t,x)$ given by $\eqref{4.13}$ satisfies 
the Rankine-Hugoniot condition $\eqref{2.3}$ along the discontinuity $S(t,x)=0$ given by $\eqref{4.14}$ 
when $u_l>u_r$ keeps, 
where $u_l$ and $u_r$ are given by
$$
u_l=\bar{u}(t,u_-),\qquad u_r=\bar{u}(t,u_+).
$$
Moreover, $S(0,x)=0$ coincides with the initial discontinuity $M(x)=0$.

The remaining proof is to check that $u(t,x)$ given by $\eqref{4.13}$ 
satisfies the geometric entropy conditions $\eqref{2.4}$--$\eqref{2.5}$ 
along discontinuity $S(t,x)=0$ given by $\eqref{4.14}$ when $u_l>u_r$ keeps. 
Let $\xi_0\in [\bar{u}(t,u_+),\bar{u}(t,u_-)]$ be determined by
$$
M_{x_i}(x^0)f'_i(\xi_0)
=\dfrac{ M_{x_i}(x^0) f_i(\bar{u}(t,u_-))- M_{x_i}(x^0)f_i(\bar{u}(t,u_+))}
{\bar{u}(t,u_-)-\bar{u}(t,u_+)}
= M_{x_i}(x^0)\frac{[f_i]_\pm}{[u]_\pm}
(t).
$$

(i)\, If $k\in [\bar{u}(t,u_+),\bar{u}(t,u_-)]$ with $k\geq\xi_0$, then 
\begin{align}\label{nkul}
&\ \vec{n}\cdot (k-u_l,f_1(k)-f_1(u_l),\cdots,f_n(k)-f_n(u_l))\nonumber\\
&=\alpha \,(- M_{x_i}(x^0)\frac{[f_i]_\pm}{[u]_\pm}
(t), M_{x_1}(x^0),\cdots,M_{x_n}(x^0))\nonumber\\
&\quad\,\,\,\,\, \cdot (k-\bar{u}(t,u_-),f_1(k)-f_1(\bar{u}(t,u_-)),\cdots, f_n(k)-f_n(\bar{u}(t,u_-)))\nonumber\\
&=\alpha\, (k-\bar{u}(t,u_-)) M_{x_i}(x^0)\,
\Big(\frac{f_i(k)-f_i(\bar{u}(t,u_-))}{k-\bar{u}(t,u_-)}-\frac{[f_i]_\pm}{[u]_\pm}
(t)\Big)\nonumber\\
&=(-\alpha)\, (\bar{u}(t,u_-)-k) M_{x_i}(x^0)\,\big(f'_i(\xi_1)-f'_i(\xi_0)\big)\nonumber\\
&=|\alpha|\, (\bar{u}(t,u_-)-k)\int_0^1 M_{x_i}(x^0)f''_i(\xi_0+\theta(\xi_1-\xi_0))\,{\rm d}\theta\,
(\xi_1-\xi_0)
\geq 0,
\end{align}
where $\xi_1$ with $\xi_0\leq k\leq \xi_1\leq \bar{u}(t,u_-)$ is determined by
$$
M_{x_i}(x^0)f'_i(\xi_1)
=\dfrac{ M_{x_i}(x^0) f_i(k)- M_{x_i}(x^0)f_i(\bar{u}(t,u_-))}{k-\bar{u}(t,u_-)}
=M_{x_i}(x^0)\frac{ f_i(k)-f_i(\bar{u}(t,u_-))}{k-\bar{u}(t,u_-)}.
$$
	
(ii)\, If $k\in [\bar{u}(t,u_+),\bar{u}(t,u_-)]$ with $k\leq \xi_0$, then
\begin{align}\label{nkur}
&\ \vec{n}\cdot (k-u_l,f_1(k)-f_1(u_l),\cdots,f_n(k)-f_n(u_l))\nonumber\\[1mm]
&=\vec{n}\cdot (k-u_r,f_1(k)-f_1(u_r),\cdots,f_n(k)-f_n(u_r))\nonumber\\[1mm]
&=\vec{n}\cdot (k-\bar{u}(t,u_+),f_1(k)-f_1(\bar{u}(t,u_+)),\cdots,f_n(k)-f_n(\bar{u}(t,u_+)))\nonumber\\
&= \alpha\, (k-\bar{u}(t,u_+))  M_{x_i}(x^0)\,
\Big(\frac{f_i(k)-f_i(\bar{u}(t,u_+))}{k-\bar{u}(t,u_+)}-\frac{[f_i]_\pm}{[u]_\pm}
(t)\Big)\nonumber\\
&= \alpha\, (k-\bar{u}(t,u_+)) M_{x_i}(x^0)\,\big(f'_i(\xi_2)-f'_i(\xi_0)\big)\nonumber\\
&=(-\alpha)\, (k-\bar{u}(t,u_+)) \int_0^1 M_{x_i}(x^0)f''_i(\xi_2+\theta(\xi_0-\xi_2))\,{\rm d}\theta\,
(\xi_0-\xi_2)
\geq  0,
\end{align}
where $\xi_2$ with $\bar{u}(t,u_+)\leq\xi_2\leq k\leq\xi_0$ is determined by
$$
M_{x_i}(x^0) f'_i(\xi_2)
=\frac{  M_{x_i}(x^0)f_i(k)- M_{x_i}(x^0)f_i(\bar{u}(t,u_+))}{k-\bar{u}(t,u_+)}
=M_{x_i}(x^0) \dfrac{ f_i(k)-f_i(\bar{u}(t,u_+))}{k-\bar{u}(t,u_+)}.
$$	

This completes the proof of Lemma $\ref{lem:4.2}$.
\end{proof}

\smallskip
\noindent
{\bf 3.} We now prove the case of rarefaction waves in the following lemma:

\begin{lemma}\label{lem:4.3}
Under the assumptions in {\rm Theorem} $\ref{th:4.4}$,
if $u_-<u_+$, the solution of the Riemann problem $\eqref{eq1.1}$--$\eqref{eq1.2}$ 
is a rarefaction wave with non-selfsimilar structure given by $\eqref{4.17}$--$\eqref{4.18}$, 
{\it i.e.},
\begin{equation*}
u(t,x)=
\begin{cases}
\bar{u}(t,u_-) \,\,&{\rm if}\ M(x_1-\chi_1(t,u_-),\cdots,x_n-\chi_n(t,u_-))<0,\,\,t\geq 0;\\
\bar{u}(t,c(t,x)) \,\,&{\rm if}\ M(x_1-\chi_1(t,u_-),\cdots,x_n-\chi_n(t,u_-))\geq 0,\\
&\quad M(x_1-\chi_1(t,u_+),\cdots,x_n-\chi_n(t,u_+))\leq 0,\,\, t>0;\\
\bar{u}(t,u_+) \,\,&{\rm if}\ M(x_1-\chi_1(t,u_+),\cdots,x_n-\chi_n(t,u_+))>0,\,\, t\geq 0,
\end{cases}
\end{equation*}
where $c(t,x)$ is the implicit function uniquely determined by 
\begin{equation*}
F(t,x,c):=M(x_1-\chi_1(t,c),x_2-\chi_2(t,c),\cdots,x_n-\chi_n(t,c))=0.
\end{equation*}
\end{lemma}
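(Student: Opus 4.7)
The plan is to verify that the piecewise formula \eqref{4.17}--\eqref{4.18} defines a bounded continuous function on $\mathbb{R}^+\times\mathbb{R}^n$ that classically solves \eqref{eq1.1} off a measure-zero set and matches the Riemann data, so that it is automatically a Kruzkov entropy solution. First I would partition the open half-space $t>0$ into three regions: the outer regions $\Omega_\pm$ from \eqref{4.6}--\eqref{4.7} swept by the characteristics carrying $u_\pm$, and the middle region $\Omega_0$ on which $M(x-\chi(t,u_-))\geq 0\geq M(x-\chi(t,u_+))$. By Lemma \ref{lem:4.1}(ii), $\Omega_-$ and $\Omega_+$ are disjoint, and the three sets cover $(0,\infty)\times\mathbb{R}^n$ by construction.

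For $(t,x)\in\Omega_0$, the continuous function $Q(s):=F(t,x,s)=M(x-\chi(t,s))$ satisfies $Q(u_-)\geq 0\geq Q(u_+)$, so the intermediate value theorem produces a zero $c\in[u_-,u_+]$. Uniqueness of $c$ will follow by reusing the local monotonicity statement \eqref{Qs0} underlying Lemma \ref{lem:4.1}: Condition $(\mathcal{H})$ forces $(s-s_0)Q(s)<0$ for $s$ near any zero $s_0$ of $Q$ lying strictly between $u_-$ and $u_+$, so two distinct zeros $s_1<s_2$ would produce an intermediate third zero $s_3\in(s_1,s_2)$ at which $Q$ passes from negative to positive, contradicting the local sign pattern at $s_3$. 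A direct differentiation then yields $F_c(t,x,c)<0$ at any zero (since the integrand arising from $\partial_c\chi_j$, combined with Condition $(\mathcal{H})$, is strictly positive on a neighborhood of $\tau=0$), so the implicit function theorem delivers $c(t,x)$ with the regularity inherited from $\chi$, $M$, and $\bar{u}$, away from the weak-discontinuity locus where $\bar{u}_s$ degenerates.

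The core verification is that $u(t,x)=\bar{u}(t,c(t,x))$ classically solves \eqref{eq1.1} inside $\Omega_0$. Implicit differentiation of $F(t,x,c(t,x))=0$ combined with the routine computations
\begin{equation*}
F_t=-M_{x_j}(x-\chi(t,c))\,f'_j(\bar{u}(t,c)),\qquad F_{x_i}=M_{x_i}(x-\chi(t,c))
\end{equation*}
yields $c_t+f'_i(\bar{u}(t,c))\,c_{x_i}=0$; combining with the ODE $\bar{u}_t(t,c)=g(\bar{u}(t,c))$ from \eqref{3.3} gives $u_t+f_i(u)_{x_i}=g(u)$. In $\Omega_\pm$, $u$ depends only on $t$ and \eqref{eq1.1} reduces to \eqref{3.3}. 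Continuity across the boundaries $M(x-\chi(t,u_\pm))=0$ is automatic because $c\to u_\pm$ there, and at $t=0$ one has $\chi_i(0,s)\equiv 0$, recovering the Riemann data. The main obstacle is the weak-discontinuity locus where $\bar{u}_s$ vanishes (i.e., where $c(t,x)$ hits $A$); there the classical computation degenerates, but $u$ remains continuous by Lemma \ref{lem:3.1}, so the Kruzkov inequality \eqref{2.2} still holds by integration by parts against a nonnegative test function since $u$ admits no jumps, and the initial trace \eqref{2.2a} follows from continuity up to $t=0$.
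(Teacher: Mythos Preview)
Your proposal is correct and follows essentially the same approach as the paper: partition into $\Omega_-$, $\Omega_+$, and the intermediate region, use the intermediate value theorem plus the monotonicity statement \eqref{Qs0} for existence and uniqueness of $c(t,x)$, apply the implicit function theorem (noting $F_c<0$ via Condition $(\mathcal{H})$) for regularity away from the degenerate locus, and then verify the PDE by the chain rule together with the characteristic ODE \eqref{3.3}. The only addition you make is the explicit remark that a continuous weak solution automatically satisfies the Kruzkov entropy inequality, which the paper leaves implicit.
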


\begin{proof} By Lemma $\ref{lem:4.1}$, $\Omega_-\cap \Omega_+=\emptyset$. Then
\begin{equation*}
u(t,x)=
\begin{cases}
\bar{u}(t,u_-)\quad & {\rm if}\ (t,x)\in \Omega_-,\\
\bar{u}(t,u_+)\quad & {\rm if}\ (t,x)\in \Omega_+,
\end{cases}
\end{equation*}
where $\Omega_-$ and $\Omega_+$ are given by $\eqref{4.6}$ and $\eqref{4.7}$, respectively. 

Let $\Omega$ be the region between $\Omega_-$ and $\Omega_+$. Then
$$
\Omega=\{(t,x)\,:\, F(t,x,u_-)\geq 0 \,\,\mbox{and}\,\, F(t,x,u_+)\leq 0\,\, \mbox{for } t>0\}.
$$

We now show that $\Omega$ can be fulfilled by the rarefaction wave determined by $\eqref{4.18}$.
In fact, for any point $(t_0,x_0)\in \Omega$, 
$$
F(t_0,x_0,u_-)\geq 0,\qquad F(t_0,x_0,u_+)\leq 0.
$$
Then there exists $c=c(t_0,x_0)\in [u_-,u_+]$ such that
\begin{equation*}
{F}(t_0,x_0,c(t_0,x_0))=0.
\end{equation*}	
It follows from $\eqref{Qs0}$
that $c=c(t_0,x_0)$ is the unique root of $F(t_0,x_0,c)=0$.
	
By the arbitrariness of $(t_0,x_0)\in \Omega$, $c=c(t,x)$ is the implicit function 
uniquely determined by $F(t,x,c)=0$ on the whole region $\Omega$. 
Clearly, $u_-\leq c(t,x)\leq u_+$.
	
Noticing that $M(x)\in C^1(\mathbb{R})$ and $f_i(u)\in C^2(\mathbb{R})$, 
it follows from $\bar{u}(t,s)\in C(\mathbb{R}^+ \times \mathbb{R})$ and 
$\bar{u}(t,s)\in C^1(\mathbb{R}^+\times \mathbb{R}\setminus (\partial A-\mathring {\partial}A))$ that
$$
F(t,x,c)\in C(\mathbb{R}^+ \times \mathbb{R}^{n}\times \mathbb{R})\cap 
C^1(\mathbb{R}^+\times \mathbb{R}^n \times (\mathbb{R}\setminus (\partial A-\mathring {\partial}A))).
$$
Then, by the implicit function theorem, we conclude that
$c(t,x)\in C(\Omega)$, and $c(t,x)$ is differentiable at $(t,x)\in \Omega$ if $c(t,x) \notin \partial A-\mathring {\partial}A$. 
Therefore, $\bar{u}(t,c(t,x))$ belongs to $C(\Omega)$, and 
$\bar{u}(t,c(t,x))$ is differentiable at $(t,x)\in\Omega$ if $c(t,x) \notin \partial A-\mathring {\partial}A.$
	
For any point $(t,x)\in {\rm int}\, \Omega$, if $c=c(t,x) \notin \partial A-\mathring {\partial}A$, then 
$$F_t=- M_{x_i}(x^0)\, f'_i(\bar{u}(t,c)),\qquad\,\,\, F_{x_i}=M_{x_i}(x^0), 
$$ 
and 
\begin{equation}\label{4.21}
F_c=-\int_0^t M_{x_i}(x^0)\, f''_i(\bar{u}(\tau,c))\,\bar{u}_s(\tau,c)\, {\rm d}\tau<0.
\end{equation}
This implies
\begin{equation}\label{4.23}
c_t=-\frac{F_t}{F_c}=\frac{ M_{x_i}(x^0)\, f'_i(\bar{u}(t,c))}{F_c},\qquad\,\,\, 
c_{x_i}=-\frac{F_{x_i}}{F_c}=-\frac{M_{x_i}(x^0)}{F_c}.
\end{equation}
Then, by the chain rule, 
\begin{align*}
&\partial_t\big(\bar{u}(t,c(t,x))\big)=\frac{{\rm d}\bar{u}}{{\rm d}t}(t,c(t,x))+\bar{u}_s(t,c(t,x))\, c_t(t,x),\\
&\partial_{x_i}\big(\bar{u}(t,c(t,x))\big)=\bar{u}_s(t,c(t,x))\, c_{x_i}(t,x).
\end{align*}
Thus, by $\eqref{4.23}$, 
\begin{align*}
\frac{\partial\bar{u} }{\partial t}+\frac{\partial }{\partial x_i}f_i(\bar{u})
&=g(\bar{u})+\bar{u}_s c_t+ f'_i(\bar{u})\bar{u}_s c_{x_i}\nonumber\\
&=g(\bar{u})+\bar{u}_s \big(c_t+ f'_i(\bar{u})c_{x_i}\big)\nonumber\\
&=g(\bar{u})+\frac{\bar{u}_s}{F_c} \big( M_{x_i}(x^0)f'_i(\bar{u})- f'_i(\bar{u})M_{x_i}(x^0)\big)\nonumber\\
&=g(\bar{u}).
\end{align*}

Finally, if $(t,x)$ lies on the surface $F(t,x,u_-)=0$, 
then $(t,x,u_-)$ satisfies $\eqref{4.18}$ with $c=u_-$ so that	$u(t,x)=\bar{u}(t,u_-)$; 
and if $(t,x)$ lies on the surface $F(t,x,u_+)=0$, 
then $(t,x,u_+)$ satisfies $\eqref{4.18}$ with $c=u_+$ so that $u(t,x)=\bar{u}(t,u_+)$. 
If $c=c(t,x) \in \partial A-\mathring {\partial}A$, 
then $u(t,x)=\bar{u}(t,c(t,x))$ for $(t,x)$ lying on the surface 
$M(x_1-\chi_1(t,c(t,x)),x_2-\chi_2(t,c(t,x)),\cdots, x_n-\chi_n(t,c(t,x)))=0$. 
This means that the rarefaction wave defined by $\eqref{4.17}$--$\eqref{4.18}$ is continuous 
on $\partial \Omega$ and the surfaces determined by 
$c(t,x)=c$ with $c\in [u_-,u_+]\cap(\partial A-\mathring {\partial}A)$.
 
This completes the proof of Lemma $\ref{lem:4.3}$.
\end{proof}

Combining Lemma $\ref{lem:4.2}$ with Lemma $\ref{lem:4.3}$, we have proved Theorem $\ref{th:4.4}$.

\smallskip
Finally, based on the proof above, we have the following remarks: 

\begin{remark}\label{rem:6.1}
If $u_-$ and $u_+$ belong to the same interval $I_i=(a_i,b_i)$ 
with $u_-> u_+$ and finite $A_i(s)$ $(${\it resp.,} $B_i(s)$$)$ in $\eqref{3.18}$, 
then $\bar{u}(t,u_-)=\bar{u}(t,u_+)=a_i$ $(${\it resp.,} $b_i$$)$ for sufficiently large $t$ 
so that the shock wave \emph{disappears in a finite time}, 
which is essentially different from the case of the Lipschitz source term. 
\end{remark}

\begin{remark}
If $u_-$ and $u_+$ belong to the same interval $I_i=(a_i,b_i)$ 
with $u_-< u_+$ and finite $A_i(s)$ $(${\it resp.,} $B_i(s)$$)$ in $\eqref{3.18}$, 
then $\bar{u}(t,u_-)=\bar{u}(t,u_+)=\bar{u}(t,c(t,x))=a_i$ $(${\it resp.,} $b_i$$)$ 
for sufficiently large $t$ 
so that the rarefaction wave \emph{disappears in a finite time}, 
which is also essentially different from the case of the Lipschitz source term. 
\end{remark}

\begin{remark}\label{rem:6.3}
If $u_-,u_+$ do not belong to the same interval $I_i$ or $J_j$, 
even though the solution $\bar{u}(t,c(t,x))$ is always continuous, 
$\bar{u}(t,c(t,x))$ may not be $C^1$ on the surfaces determined by 
$c(t,x)=c$ with $c\in [u_-,u_+]\cap(\partial A-\mathring {\partial}A)$. 
It is caused by the singularity of the source term $g(u)$ on $\partial A-\mathring {\partial}A$. 
In fact, by {\rm Lemma} $\ref{lem:3.1}$, 
if $s_0 \in \partial A-\mathring {\partial}A$ or $g'(s_0)=-\infty$, 
then $\bar{u}(t,s)$ is not differentiable in $s$ at the same point $s_0$. 
For such cases, \emph{the rarefaction wave consists of more than one smooth rarefaction wave pieces}, 
which are connected by weak discontinuities. 
This phenomenon never occurs in the case of the Lipschitz source term.
\end{remark}

\begin{remark}\label{rem:11}
In {\rm Theorem} $\ref{th:4.4}$, {\it Condition} $(\mathcal{H})$ can be extended to the case that, 
\begin{equation}
\begin{array}{l}
\mbox{$H(x,u)\geq 0$ for $x\in\mathbb{R}^n$ with $M(x)=0$ and $u\in (a,b)$ such that}\\ 
\mbox{$H(x,u)=0$ holds for at most countable points of $(x,u)$.}
\end{array}
\end{equation}
In fact, it follows from $\eqref{4.10}$ that $\eqref{Qs0}$ holds, 
the geometric entropy conditions $\eqref{nkul}$--$\eqref{nkur}$ clearly hold, 
and the implicit function $c=c(t,x)$ for $(t,x)\in \Omega$ can be uniquely determined by $F(t,x,c)=0$.
For $\eqref{4.21}$, it always holds that $F_c\leq 0$, 
where $F_c=0$ if and only if $g(c)=0$ and $H(x^0,c)=0$, 
in which case the solution $u(t,x)=\bar{u}(t,c(t,x))$ is continuous but not differentiable, 
so that the rarefaction wave has a weak discontinuity. 
\end{remark}

\section{Two Examples}
In this section, we provide two examples to demonstrate the uniqueness of basic waves 
when the source term is non-Lipschitz from left (necessarily right-Lipschitz), 
and the non-uniqueness of basic waves when the source term 
is non-Lipschitz from right, respectively.

\subsection{Riemann solutions with a right-Lipschitz source term} 
Consider the $2$-D Riemann problem: 
\begin{equation}\label{6.1}
\begin{cases}
 u_t+(\frac{u^2}{2})_x+(\frac{u^4}{4})_y=-u^{\frac{1}{3}},\\[2mm]
u(0,x,y)=
\begin{cases}
u_-\qquad{\rm if}\,\, x^3+y<0,
\\[2mm]
u_+\qquad{\rm if}\,\, x^3+y>0.
\end{cases}
\end{cases}
\end{equation}

Since $g(u)=-u^{\frac{1}{3}}$ satisfies that $g'(0)={-}\infty$, 
the source term $-u^{\frac{1}{3}}$ of the equation in $\eqref{6.1}$ is right-Lipschitz but not left-Lipschitz. 
By applying Theorem $\ref{th:4.4}$ to the Riemann problem $\eqref{6.1}$, 
the global Riemann solutions with non-selfsimilar structures, 
including shock waves and rarefaction waves, 
are obtained for various cases. 
For this example, we see that the Riemann solution is actually unique 
when the source term is non-Lipschitz from left (but necessarily right-Lipschitz).
Moreover, we can clearly see the new phenomenon that the shock waves and rarefaction waves 
\emph{disappear in a finite time} since the solutions decay to zero in a finite time.

\smallskip
Solving the characteristic ODEs of the equation in $\eqref{6.1}$, we obtain
\begin{equation*}
\bar{u}(t,s)=
\begin{cases}
{\rm sgn}(s) \big(|s|^{\frac{2}{3}}-\frac{2}{3}t\big)^{\frac{3}{2}}
\quad&{\rm if}\ t< \frac{3}{2}|s|^{\frac{2}{3}}, 
\\[2mm]
0\quad&{\rm if}\ t \geq \frac{3}{2}|s|^{\frac{2}{3}}.
\end{cases}
\end{equation*}
For $\eqref{4.1}$, it is direct to check that
\begin{equation*}
\chi_1(t,s)=\begin{cases}
\frac{3}{5}{\rm sgn}(s)\big(|s|^{\frac{5}{3}}-(|s|^{\frac{2}{3}}-\frac{2}{3}t)^{\frac{5}{2}}\big)
\quad& {\rm if}\ t< \frac{3}{2}|s|^{\frac{2}{3}},\\[2mm]
\frac{3}{5}{\rm sgn}(s)|s|^{\frac{5}{3}}
\quad&{\rm if}\ t \geq \frac{3}{2}|s|^{\frac{2}{3}},
\end{cases}
\end{equation*}
\begin{equation*}
\chi_2(t,s)=\begin{cases}
\frac{3}{11}{\rm sgn}(s)
\big(|s|^{\frac{11}{3}}-(|s|^{\frac{2}{3}}-\frac{2}{3}t)^{\frac{11}{2}}\big)
& {\rm if}\ t< \frac{3}{2}|s|^{\frac{2}{3}},
\\[2mm]
\frac{3}{11}{\rm sgn}(s)|s|^{\frac{11}{3}}
&{\rm if}\ t \geq \frac{3}{2}|s|^{\frac{2}{3}}.
\end{cases}
\end{equation*}
Moreover, for $\eqref{4.3}$--$\eqref{4.4}$, we have
\begin{equation*}
\begin{cases}
 [\chi_1](t)=\frac{1}{2}\big(\chi_1(t,u_-)+\chi_1(t,u_+)\big),\\[2mm]
 [\chi_2](t)=\frac{1}{4}\big(\chi_2(t,u_-)+\chi_2(t,u_+)+P(t)\big),
\end{cases}
\end{equation*}
where $P(t)$ is given by
\begin{equation*}
P(t):=\int_0^t \big(\bar{u}^2(t,u_-)\bar{u}(t,u_+)+\bar{u}(t,u_-)\bar{u}^2(t,u_+)\big)\, {\rm d}\tau.
\end{equation*}

For the Riemann problem $\eqref{6.1}$, {\it Condition} $(\mathcal{H})$ in $\eqref{convexh}$ 
\begin{equation}\label{6.7a}
H(x,y,u)=3x^2+3u^2>0
\end{equation}
holds, except for $(x,u)=(0,0)$.
By the arguments in Remark $\ref{rem:11}$, 
we can use Theorem $\ref{th:4.4}$ to solve the Riemann problem $\eqref{6.1}$ case by case.

\smallskip
\noindent
{\bf 1.} We first consider the case that $u_-> u_+$. 
By Theorem $\ref{th:4.4}$, the solution is a shock wave. 
Based on the relation between $|u_-|$ and $|u_+|$, there are three subcases.

\smallskip
\noindent
{\bf 1.1.} For the case that $|u_-|> |u_+|$, the solution is given by (see Fig. $\ref{fig1}$):

\begin{figure}
	\begin{center}
		{\includegraphics[width=0.6\columnwidth]{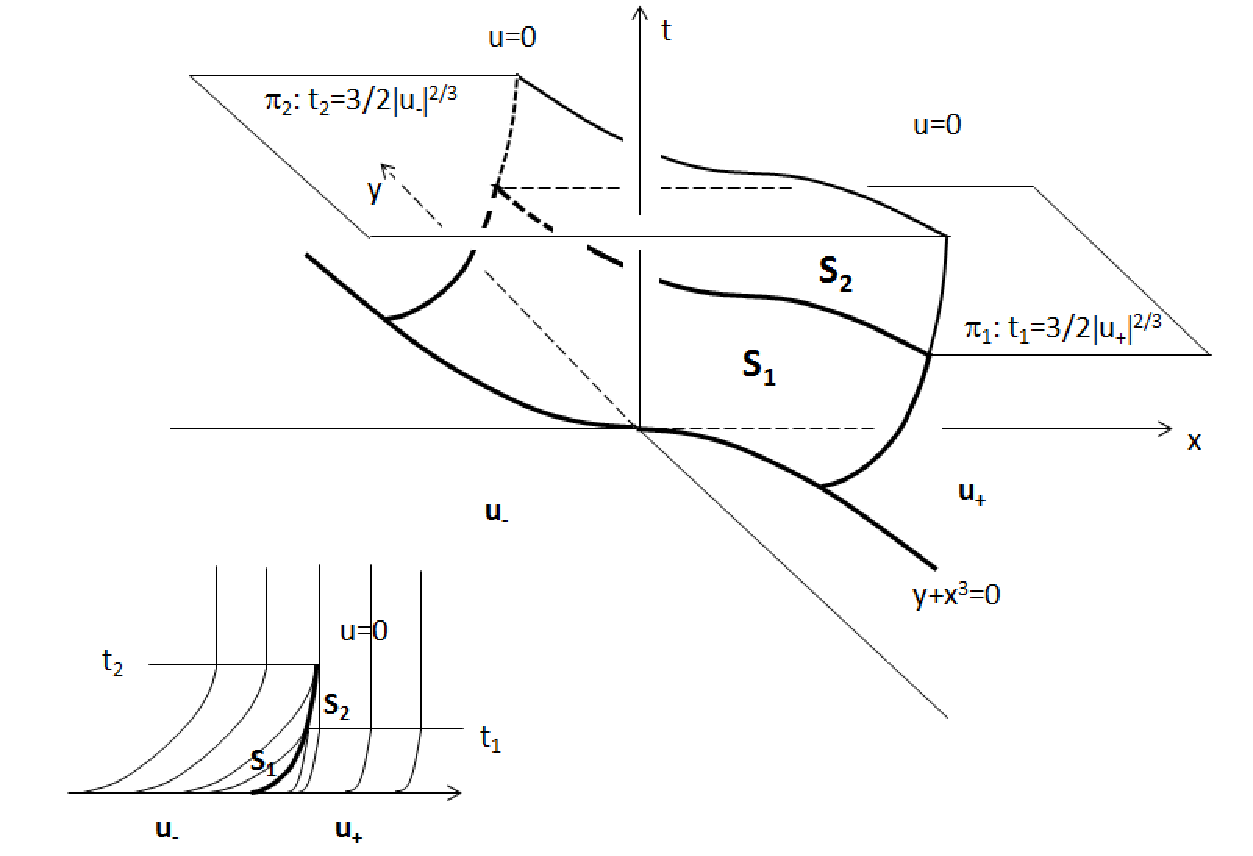}}
  \caption{$u_->u_+\,\, {\rm with}\,\,|u_-|>|u_+|.$
		}\label{fig1}
	\end{center}
\end{figure}

(i)\, When $0 \leq t \leq \frac{3}{2}|u_+|^{\frac{2}{3}}$, 
\begin{equation*}
u(t,x,y)=
\begin{cases}
{\rm sgn}(u_-)\,\big(|u_-|^{\frac{2}{3}}-\frac{2}{3}t\big)^{\frac{3}{2}}
\quad&{\rm if}\ S_1(t,x,y)<0,\\[2mm]
 {\rm sgn}(u_+)\, \big(|u_+|^{\frac{2}{3}}-\frac{2}{3}t\big)^{\frac{3}{2}}
 \quad&{\rm if}\ S_1(t,x,y)>0,
\end{cases}
\end{equation*}
and the shock surface $S_1(t,x,y)=0$ is given by
$$\Big\{x-\frac{1}{2}\big(\chi_1(t,u_-)+\chi_1(t,u_+)\big)\Big\}^3
+y-\frac{1}{4}\big(\chi_2(t,u_-)+\chi_2(t,u_+)+P(t)\big)=0.$$

(ii)\, When $\frac{3}{2}|u_+|^{\frac{2}{3}} \leq t \leq \frac{3}{2}|u_-|^{\frac{2}{3}}$, 
\begin{equation*}
u(t,x,y)=
\begin{cases}
 {\rm sgn}(u_-)\,\big(|u_-|^{\frac{2}{3}}-\frac{2}{3}t\big)^{\frac{3}{2}}
 \quad&{\rm if}\ S_2(t,x,y)<0,\\[2mm]
 0\quad&{\rm if}\ S_2(t,x,y)>0,
\end{cases}
\end{equation*}
and the shock surface $S_2(t,x,y)=0$ is given by
$$\Big\{x-\frac{1}{2}\big(\chi_1(t,u_-)+\chi_1(\tfrac{3}{2}|u_+|^{\frac{2}{3}},u_+)\big)\Big\}^3
+y-\frac{1}{4}\big(\chi_2(t,u_-)+
\chi_2(\tfrac{3}{2}|u_+|^{\frac{2}{3}},u_+)+
P(\tfrac{3}{2}|u_+|^{\frac{2}{3}})\big)=0.$$

(iii)\, When $t \geq \frac{3}{2}|u_-|^{\frac{2}{3}}$, the solution $u(t,x,y)\equiv 0$.

\begin{remark}
For the case that $u_->u_+$ with $|u_-|>|u_+|$, 
the shock surface is given by $S_1(t,x,y)=0$ when $0 \leq t \leq \frac{3}{2}|u_+|^{\frac{2}{3}}$, 
and by $S_2(t,x,y)=0$ when $\frac{3}{2}|u_+|^{\frac{2}{3}} \leq t \leq \frac{3}{2}|u_-|^{\frac{2}{3}}$. 
These two shock surfaces connect each other at $t=\frac{3}{2}|u_+|^{\frac{2}{3}} $ continuously. 
When $t \geq \frac{3}{2}|u_-|^{\frac{2}{3}}$, the solution $u(t,x,y)\equiv 0$ 
so that the shock wave disappears.
\end{remark}

\begin{figure}
	\begin{center}
		{\includegraphics[width=0.6\columnwidth]{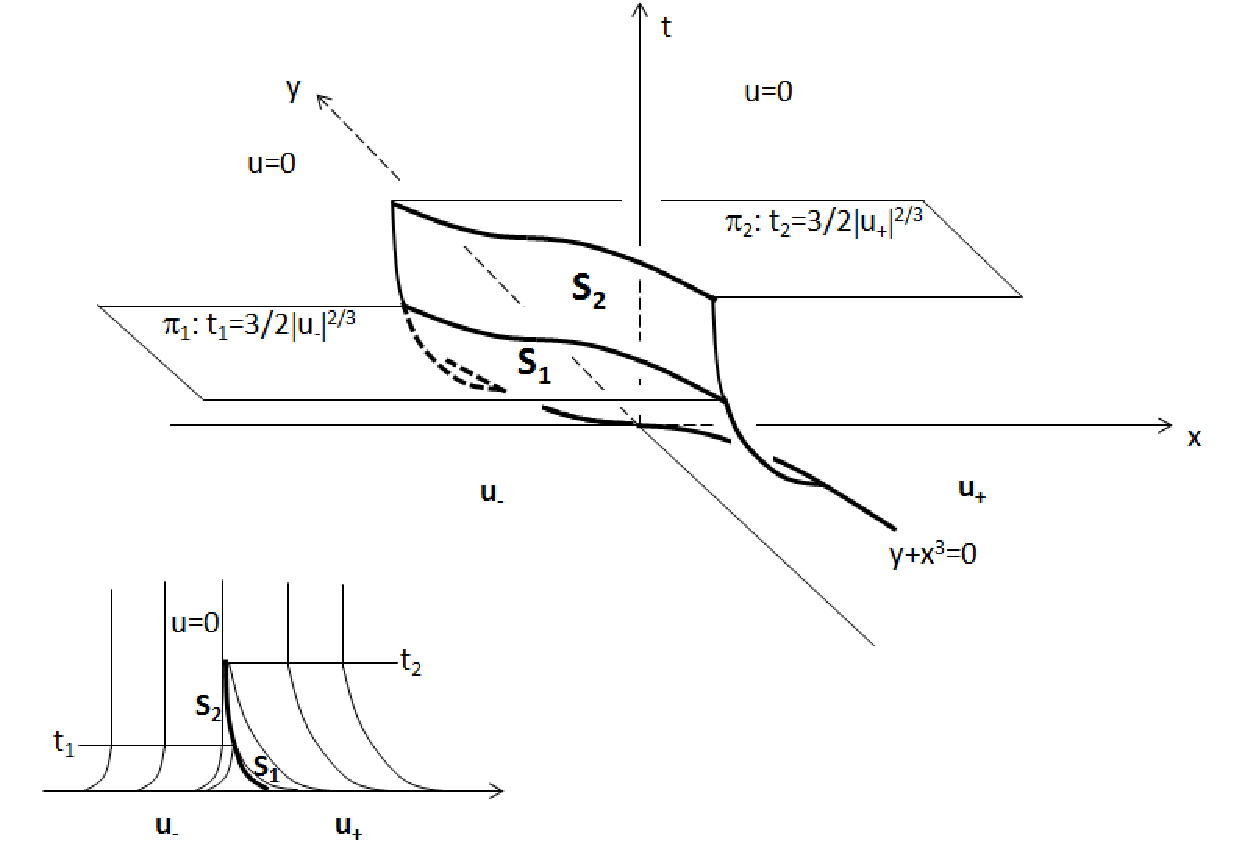}}
  \caption{$u_->u_+\,\, {\rm with}\,\,|u_-|<|u_+|.$
		}\label{fig2}
	\end{center}
\end{figure}

\smallskip
\noindent
{\bf 1.2.} For the case that $|u_-|<|u_+|$, the solution is given by (see Fig. $\ref{fig2}$):

(i)\, When $0 \leq t \leq \frac{3}{2}|u_-|^{\frac{2}{3}}$, 
\begin{equation*}
u(t,x,y)=
\begin{cases}
 {\rm sgn}(u_-)\, \big(|u_-|^{\frac{2}{3}}-\frac{2}{3}t\big)^{\frac{3}{2}}
 \quad &{\rm if}\ S_1(t,x,y)<0,\\[2mm]
 {\rm sgn}(u_+)\, \big(|u_+|^{\frac{2}{3}}-\frac{2}{3}t\big)^{\frac{3}{2}}
 \quad &{\rm if}\ S_1(t,x,y)>0,
\end{cases}
\end{equation*}
and the shock surface $S_1(t,x,y)=0$ is given by
$$\Big\{x-\frac{1}{2}\big(\chi_1(t,u_-)+\chi_1(t,u_+)\big)\Big\}^3
+y-\frac{1}{4}\big(\chi_2(t,u_-)+\chi_2(t,u_+)+P(t)\big)=0.$$

(ii)\, When $\frac{3}{2}|u_-|^{\frac{2}{3}} \leq t \leq \frac{3}{2}|u_+|^{\frac{2}{3}}$, 
\begin{equation*}
u(t,x,y)=
\begin{cases}
0\quad&{\rm if}\ S_2(t,x,y)<0,\\[1mm]
 {\rm sgn}(u_+) \big(|u_+|^{\frac{2}{3}}-\frac{2}{3}t\big)^{\frac{3}{2}}
 \quad&{\rm if}\ S_2(t,x,y)>0,
\end{cases}
\end{equation*}
and the shock surface $S_2(t,x,y)=0$ is given by
$$\Big\{x-\frac{1}{2}\big(\chi_1(\tfrac{3}{2}|u_-|^{\frac{2}{3}},u_-)+\chi_1(t,u_+)\big)\Big\}^3
+y-\frac{1}{4}\big(\chi_2(\tfrac{3}{2}|u_-|^{\frac{2}{3}},u_-)+
\chi_2(t,u_+)+P(\tfrac{3}{2}|u_-|^{\frac{2}{3}})\big)=0.$$

(iii)\, When $t \geq \frac{3}{2}|u_+|^{\frac{2}{3}}$, the solution $u(t,x,y)\equiv 0$.

\begin{remark}
For the case that $u_->u_+$ with $|u_-|<|u_+|$, 
the shock surface is given by $S_1(t,x,y)=0$ when $0 \leq t \leq \frac{3}{2}|u_-|^{\frac{2}{3}}$, 
and by $S_2(t,x,y)=0$ when $\frac{3}{2}|u_-|^{\frac{2}{3}} \leq t \leq \frac{3}{2}|u_+|^{\frac{2}{3}}$. 
These two shock surfaces connect each other at $t=\frac{3}{2}|u_-|^{\frac{2}{3}}$ continuously. 
When $t \geq \frac{3}{2}|u_+|^{\frac{2}{3}}$, the solution $u(t,x,y)\equiv 0$, 
so that the shock wave disappears.
\end{remark}

\smallskip
\noindent
{\bf 1.3.} For the case that $|u_-|=|u_+|$, it follows from $u_->u_+$ that $u_-=-u_+>0$. 
Then $[\chi_1](t)\equiv 0$ and $[\chi_2](t)\equiv 0$. 
Therefore, the shock surface $x^3+y=0$ stays still until $t = \frac{3}{2}|u_+|^{\frac{2}{3}}$, after which $u(t,x,y) \equiv 0$ 
so that the shock wave disappears.

\medskip
\noindent
{\bf 2.} We now consider the case that $u_-< u_+$. 
By Theorem $\ref{th:4.4}$, the solution is a rarefaction wave. 
Since the structure of the rarefaction wave for the case that $ 0<u_-< u_+$ or $u_-< u_+<0$ 
is just a part of the case that $u_-<0< u_+$, to make our illustration short and without loss of generality,
we only give the solution for the case that $u_-<0< u_+$.

By Theorem $\ref{th:4.4}$, the solution is given by (see Fig. $\ref{fig3}$):
\begin{equation*}
u(t,x,y)=
\begin{cases}
-\big(|u_-|^{\frac{2}{3}}-\frac{2}{3}t\big)^{\frac{3}{2}} 
\quad&{\rm if}\ \big\{x+\frac{3}{5}\big(|u_-|^{\frac{5}{3}}
-\big(|u_-|^{\frac{2}{3}}-\frac{2}{3}t\big)^{\frac{5}{2}}\big)\big\}^3
+y\\[2mm]
&\quad\, +\frac{3}{11}\big(|u_-|^{\frac{11}{3}}
-\big(|u_-|^{\frac{2}{3}}-\frac{2}{3}t\big)^{\frac{11}{2}}\big)< 0,\,  
0 \leq t\leq \frac{3}{2}|u_-|^{\frac{2}{3}},
\\[2mm]
0 &{\rm if}\ \big(x+\frac{3}{5}|u_-|^{\frac{5}{3}}\big)^3
+y+\frac{3}{11}|u_-|^{\frac{11}{3}}< 0,\, t\geq \frac{3}{2}|u_-|^{\frac{2}{3}},
\\[2mm]
u_-(t,x,y)&{\rm if}\ (t,x,y)\in \Omega^-_R,
\\[2mm]
0&{\rm if}\ (t,x,y)\in \Omega^0_R,
\\[2mm]
u_+(t,x,y)&{\rm if}\ (t,x,y)\in \Omega^+_R,
\\[2mm]
\big(|u_+|^{\frac{2}{3}}-\frac{2}{3}t\big)^{\frac{3}{2}} 
&{\rm if}\ \big\{x-\frac{3}{5}\big(|u_+|^{\frac{5}{3}}
-\big(|u_+|^{\frac{2}{3}}-\frac{2}{3}t\big)^{\frac{5}{2}}\big)\big\}^3+y
\\[2mm]
 & \quad\, -\frac{3}{11}\big(|u_+|^{\frac{11}{3}}
 -\big(|u_+|^{\frac{2}{3}}-\frac{2}{3}t\big)^{\frac{11}{2}}\big)> 0,\, 
  0 \leq t\leq \frac{3}{2}|u_+|^{\frac{2}{3}},
\\[2mm]
0 &{\rm if}\ \big(x-\frac{3}{5}|u_+|^{\frac{5}{3}}\big)^3
+y-\frac{3}{11}|u_+|^{\frac{11}{3}}> 0,\, t \geq \frac{3}{2}|u_+|^{\frac{2}{3}},
\end{cases}
\end{equation*}
where $u_-(t,x,y)=-\big(|c_-(t,x,y)|^{\frac{2}{3}}-\frac{2}{3}t\big)^{\frac{3}{2}}$ in $\Omega^-_R$, 
and $c_-=c_-(t,x,y)<0$ is the unique root of the following equation
\begin{equation*}
\Big\{x+\frac{3}{5}\big({-}c^{\frac{5}{3}}-\big(c^{\frac{2}{3}}-\tfrac{2}{3}t\big)^{\frac{5}{2}}\big)\Big\}^3
+y+\frac{3}{11}\Big({-}c^{\frac{11}{3}}-\big(c^{\frac{2}{3}}-\tfrac{2}{3}t\big)^{\frac{11}{2}}\Big)=0;
\end{equation*}
and $u_+(t,x,y)=\big(|c_+(t,x,y)|^{\frac{2}{3}}-\frac{2}{3}t\big)^{\frac{3}{2}}$ in $\Omega^+_R$, 
and $c_+=c_+(t,x,y)>0$ is the unique root of the following equation:
\begin{equation*}
\Big\{x-\frac{3}{5}\big(c^{\frac{5}{3}}-\big(c^{\frac{2}{3}}-\tfrac{2}{3}t\big)^{\frac{5}{2}}\big)\Big\}^3
+y-\frac{3}{11}\Big(c^{\frac{11}{3}}-\big(c^{\frac{2}{3}}-\tfrac{2}{3}t\big)^{\frac{11}{2}}\Big)=0.
\end{equation*}
Here, the regions $\Omega^\pm_R$ and $\Omega^0_R$ are given by
$$
\Omega^\pm_R=\left\{(t,x,y)\,\left|
\,\begin{array}{l}
\mp\Big(\big\{x\mp\frac{3}{5}\big(|u_\pm|^{\frac{5}{3}}-\big(|u_\pm|^{\frac{2}{3}}-\frac{2}{3}t\big)^{\frac{5}{2}}\big)\big\}^3+y
\\[2mm]
\qquad\mp\frac{3}{11}\big(|u_\pm|^{\frac{11}{3}}-\big(|u_\pm|^{\frac{2}{3}}-\frac{2}{3}t\big)^{\frac{11}{2}}\big)\Big)\geq 0,
\\[2mm]
\pm\Big(\big(x\mp\frac{3}{5}\big(\frac{2}{3}t\big)^{\frac{5}{2}}\big)^3+y\mp\frac{3}{11}\big(\frac{2}{3}t\big)^{\frac{11}{2}}\Big)> 0, 
 \\[2mm]
0 < t< \frac{3}{2}|u_\pm|^{\frac{2}{3}}.
\end{array}\right.
\right\},
$$
and

\begin{figure}
	\begin{center}
		{\includegraphics[width=0.6\columnwidth]{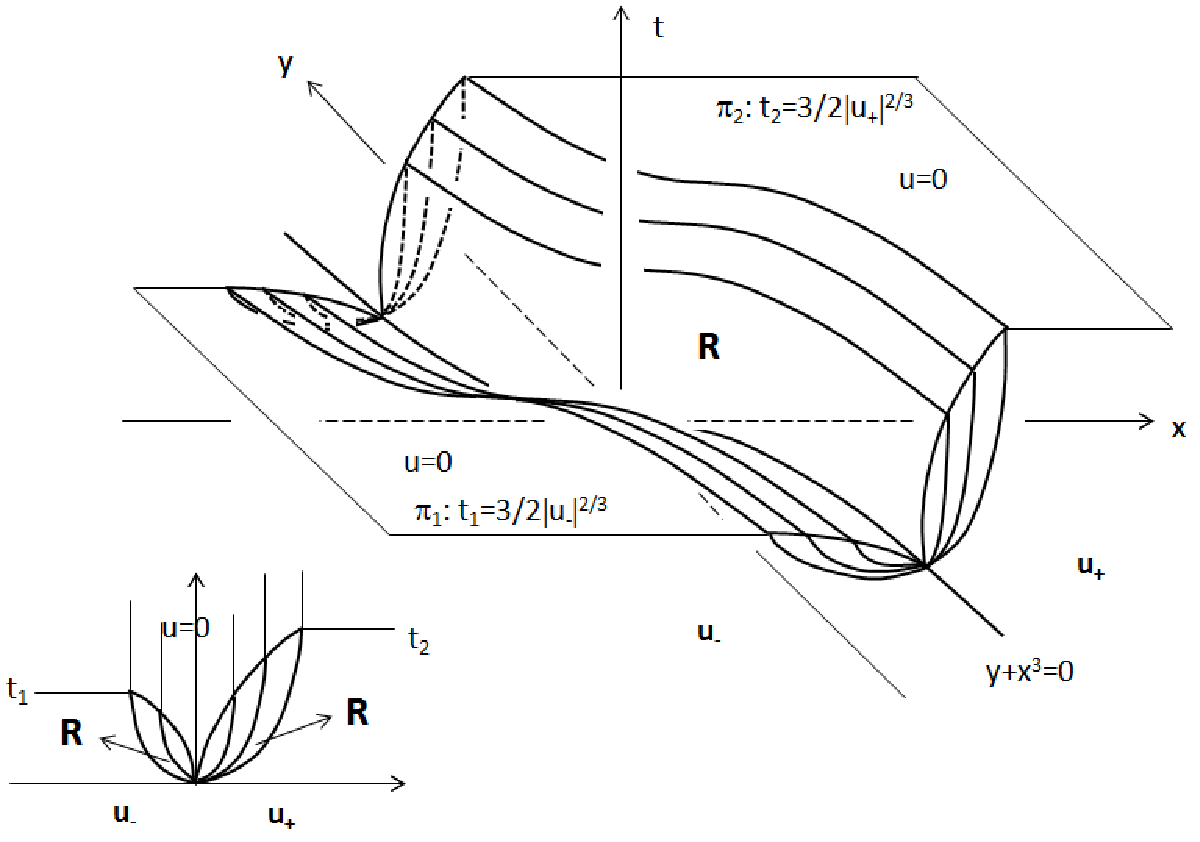}}
  \caption{$u_-<0<u_+.$
		}\label{fig3}
	\end{center}
\end{figure}

$$
\Omega^0_R=\left\{(t,x,y)\,\left|\,\begin{array}{l}
\big(x+\frac{3}{5}\big(\frac{2}{3}t\big)^{\frac{5}{2}}\big)^3
+y+\frac{3}{11}\big(\frac{2}{3}t\big)^{\frac{11}{2}}\geq 0,
\\[2mm]
\big (x-\frac{3}{5}\big(\frac{2}{3}t\big)^{\frac{5}{2}}\big)^3
+y-\frac{3}{11}\big(\frac{2}{3}t\big)^{\frac{11}{2}}\leq 0,
\\[2mm]
\big(x+\frac{3}{5}|u_-|^{\frac{5}{3}}\big)^3+y+\frac{3}{11}|u_-|^{\frac{11}{3}}\geq 0,
\\[2mm]
\big(x-\frac{3}{5}|u_+|^{\frac{5}{3}}\big)^3+y-\frac{3}{11}|u_+|^{\frac{11}{3}}\leq 0,
\end{array}\right.
\right\}.
$$

\begin{remark}
For the case that $u_-< u_+$, 
the solution is a rarefaction wave in $\Omega^-_R\cup \Omega^+_R$, 
where the characteristics emitting from regions $\{x^3+y>0\}$ and $\{x^3+y<0\}$ can not reach.
It is interesting to see that the rarefaction wave \emph{decays to zero} at the interfaces of $\partial \Omega^-_R\cap \partial \Omega^0_R$ and $\partial \Omega^+_R \cap \partial \Omega^0_R$, 
and remains zero after that.
\end{remark}

\smallskip
\subsection{Non-uniqueness of Riemann solutions with a left-Lipschitz source term} 
Consider the $2$-D Riemann problem: 
\begin{equation}\label{6.13}
\begin{cases}
 u_t+(\frac{u^2}{2})_x+(\frac{u^4}{4})_y=u^{\frac{1}{3}},\\[2mm]
u(0,x,y)=
\begin{cases}
u_-\qquad{\rm if}\ x^3+y<0,\\[1mm]
u_+\qquad{\rm if}\ x^3+y>0.
\end{cases}
\end{cases}
\end{equation}

Since $g(u)=u^{\frac{1}{3}}$ satisfies that $g'(0)=\infty$, 
the source term $u^{\frac{1}{3}}$ of the equation in $\eqref{6.13}$ is not right-Lipschitz.
So it does not satisfy the assumptions in Theorem $\ref{th:4.4}$. 
From this example, we see that the Riemann solution is \emph{not unique} when the source term is not right-Lipschitz. 
In fact, since the source term $u^{\frac{1}{3}}$ is not right-Lipschitz at $u=0$, 
the solution of ODE $\eqref{3.3}$ with initial data $s=0$ is not unique, 
and so does the characteristic of the solution of ODE $\eqref{3.1}$. 

Based on such observation on the non-uniqueness of the solutions of characteristic ODEs, 
by using the method of characteristic analysis, 
we can construct several different solutions for the same Riemann initial data, 
all of which satisfy the Rankine-Hugoniot condition and the geometric entropy conditions.

\smallskip
Solving the characteristic ODEs of the equation in $\eqref{6.13}$, we obtain 
\begin{equation}\label{6.16.1}
\bar{u}(t,s) ={\rm sgn}(s)\, \big(|s|^{\frac{2}{3}}+\frac{2}{3}t\big)^{\frac{3}{2}}
\qquad{\rm if}\ s\neq 0;
\end{equation}
and if $s=0$, there exist at least three different solutions of 
the characteristic ODE $\eqref{3.3}$, 
\begin{equation}\label{6.16.2}
 \bar{u}(t,0) =\big(\frac{2}{3}t\big)^{\frac{3}{2}},\quad 
 -\big(\frac{2}{3}t\big)^{\frac{3}{2}},\quad 
 {\rm or}
 \,\,\,\, 0.
\end{equation}
For $\eqref{4.1}$, if $s\neq0$, 
\begin{equation*}
\begin{cases}
 \chi_1(t,s) =\frac{3}{5}{\rm sgn}(s)\,
 \big(\big(|s|^{\frac{2}{3}}+\frac{2}{3}t\big)^{\frac{5}{2}}-|s|^{\frac{5}{3}}\big), \\[2mm]
 \chi_2(t,s) =\frac{3}{11}{\rm sgn}(s)\,
 \big(\big(|s|^{\frac{2}{3}}+\frac{2}{3}t\big)^{\frac{11}{2}}-|s|^{\frac{11}{3}}\big),
\end{cases}
\end{equation*}
and if $s=0$,  corresponding to \eqref{6.16.2}, there exist at least three different solutions of 
the characteristic ODE $\eqref{3.1}$, 
\begin{equation*}
\begin{cases}
\chi_1(t,0) =\frac{3}{5}\big(\frac{2}{3}t\big)^{\frac{5}{2}},\quad\,\,\,\; 
-\frac{3}{5}\big(\frac{2}{3}t\big)^{\frac{5}{2}}, \quad\,\,\,\, 
\mbox{or}
\,\,\,\, 0;\\[2mm]
\chi_2(t,0) =\frac{3}{11}\big(\frac{2}{3}t\big)^{\frac{11}{2}},\quad 
-\frac{3}{11}\big(\frac{2}{3}t\big)^{\frac{11}{2}}, \quad 
\mbox{or}
\,\,\,\, 0.
\end{cases}
\end{equation*}
Moreover, for $\eqref{4.3}$--$\eqref{4.4}$, we have 
\begin{equation*}
\begin{cases}
 [\chi_1](t)=\frac{1}{2}\big(\chi_1(t,u_-)+\chi_1(t,u_+)\big),\\[2mm]
 [\chi_2](t)=\frac{1}{4}\big(\chi_2(t,u_-)+\chi_2(t,u_+)+\tilde{P}(t)\big),
\end{cases}
\end{equation*}
where $\tilde{P}(t)$ is given by
\begin{equation*}
\tilde{P}(t):=
\int_0^t \big(\bar{u}^2(\tau,u_-)\bar{u}(\tau,u_+)+\bar{u}(\tau,u_-)\bar{u}^2(\tau,u_+)\big) \, {\rm d}\tau.
\end{equation*}

As shown in $\eqref{6.7a}$, 
the Riemann problem $\eqref{6.13}$ satisfies {\it Condition} $(\mathcal{H})$.
Noticing that the source term $u^{\frac{1}{3}}$ is Lipschitz 
on any interval $[a,\infty)$ with $a>0$ and $({-}\infty, b]$ with $b<0$, respectively. 
Then, by $\eqref{6.16.1}$, the solution is unique 
if the Riemann initial data in $\eqref{6.13}$ satisfy 
$$
\min \{u_-, u_+\}>0\qquad {\rm or}\qquad \max \{u_-, u_+\}<0.
$$
However, the solution is not unique 
if $0$ belongs to the closed interval bounded by $u_-$ and $u_+$.
Based on such observation, as an example, we construct different shock solutions 
for the case that $u_->u_+=0$, 
and different rarefaction waves for the case that $u_-< u_+=0$.

\smallskip
\noindent
{\bf 1.} We first consider the case that $u_->u_+=0$.
Similar to the proof of Lemma $\ref{lem:4.1}$, 
it is easy to see that the solution must contain a discontinuity. 
Based on the three choices of $\bar{u}(t,0)$ in $\eqref{6.16.2}$, 
we can construct three shock waves as follows (see Fig. $\ref{fig4}$):

\smallskip
{\it Solution} 1: By choosing $u_l=\bar{u}(t,u_-)>0$ and $u_r=\bar{u}(t,0)=0$, 
the solution is 
\begin{equation*}
u(t,x,y)=
\begin{cases}
 \big(|u_-|^{\frac{2}{3}}+\frac{2}{3}t\big)^{\frac{3}{2}}\quad&{\rm if}\ S_1(t,x,y)<0,
\\[2mm]
 0\quad&{\rm if}\ S_1(t,x,y)>0,
\end{cases}
\end{equation*}
and the shock surface $S_1(t,x,y)=0$ is given by
$$
\big(x-\frac{1}{2}\chi_1(t,u_-)\big)^3+y-\frac{1}{4}\chi_2(t,u_-)=0.
$$

\smallskip
{\it Solution} 2: By choosing $u_l=\bar{u}(t,u_-)>0$ 
and $u_r=\bar{u}(t,0)=-(\frac{2}{3}t)^{\frac{3}{2}}$, 
the solution is
\begin{equation*}
u(t,x,y)=
\begin{cases}
 \big(|u_-|^{\frac{2}{3}}+\frac{2}{3}t\big)^{\frac{3}{2}}\quad&{\rm if}\ S_2(t,x,y)<0,
\\[2mm]
 -\big(\frac{2}{3}t\big)^{\frac{3}{2}}\quad&{\rm if}\ S_2(t,x,y)>0,
\end{cases}
\end{equation*}
and the shock wave $S_2(t,x,y)=0$ is given by
$$
\Big\{x-\frac{1}{2}\big(\chi_1(t,u_-)-\tfrac{3}{5}\big(\tfrac{2}{3}t\big)^{\frac{5}{2}}\big)\Big\}^3
+y-\frac{1}{4}\Big(\chi_2(t,u_-)-\tfrac{3}{11}\big(\tfrac{2}{3}t\big)^{\frac{11}{2}}+\tilde{P}(t)\Big)=0,
$$
where 
$\tilde{P}(t)$ is given by
$$
\tilde{P}(t)=\int_0^t \Big({-}\big(\tfrac{2}{3}\tau\big)^{\frac{3}{2}}\big(u_-^{\frac{2}{3}}+\tfrac{2}{3}\tau\big)^3
+\big(\tfrac{2}{3}\tau\big)^3\big(u_-^{\frac{2}{3}}+\tfrac{2}{3}\tau\big)^{\frac{3}{2}}\Big)
\, {\rm d}\tau.
$$

\begin{figure}
	\begin{center}
		{\includegraphics[width=0.7\columnwidth]{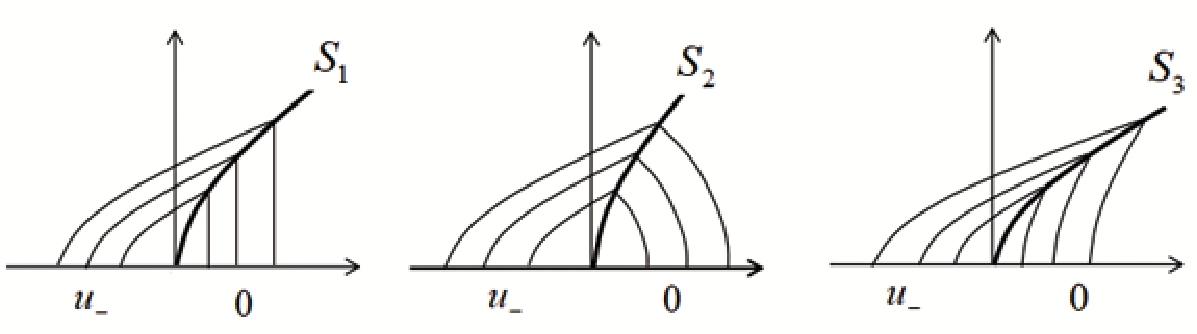}}
  \caption{$u_->0\,\, {\rm with}\,\, u_+=0.$
		}\label{fig4}
	\end{center}
\end{figure}

\smallskip
{\it Solution} 3: By choosing $u_l=\bar{u}(t,u_-)>0$ 
and $u_r=\bar{u}(t,0)=\big(\frac{2}{3}t\big)^{\frac{3}{2}}$, 
the solution is
\begin{equation*}
u(t,x,y)=
\begin{cases}
 \big(|u_-|^{\frac{2}{3}}+\frac{2}{3}t\big)^{\frac{3}{2}}\quad&{\rm if}\ S_3(t,x,y)<0,
\\[2mm]
 \big(\frac{2}{3}t\big)^{\frac{3}{2}}\quad&{\rm if}\ S_3(t,x,y)>0,
\end{cases}
\end{equation*}
and the shock wave $S_3(t,x,y)=0$ is given by
$$
\Big\{x-\frac{1}{2}\big(\chi_1(t,u_-)+\tfrac{3}{5}\big(\tfrac{2}{3}t\big)^{\frac{5}{2}}\big)\Big\}^3
+y-\frac{1}{4}\Big(\chi_2(t,u_-)+\tfrac{3}{11}\big(\tfrac{2}{3}t\big)^{\frac{11}{2}}+\tilde{P}(t)\Big)=0,
$$
where 
$\tilde{P}(t)$ is given by
$$
\tilde{P}(t)=\int_0^t 
\Big(\big(\tfrac{2}{3}\tau\big)^{\frac{3}{2}}\big(u_-^{\frac{2}{3}}+\tfrac{2}{3}\tau\big)^3
+\big(\tfrac{2}{3}\tau\big)^3\big(u_-^{\frac{2}{3}}+\tfrac{2}{3}\tau\big)^{\frac{3}{2}}\Big) 
\, {\rm d}\tau.
$$

\smallskip
\noindent
{\bf 2.} We now consider the case that $u_-<u_+=0$.
Similar to the proof of Lemma $\ref{lem:4.1}$, 
it is easy to see that the characteristics 
emitting from regions $M(x)<0$ and $M(x)>0$, respectively, do not intersect.
Based on the three choices of $\bar{u}(t,0)$ in $\eqref{6.16.2}$, 
we can construct three rarefaction waves as follows (see Fig. $\ref{fig5}$):

\smallskip
{\it Solution} 1: By choosing $u_l=\bar{u}(t,u_-)<0$ and $u_r=\bar{u}(t,0)=0$, 
the solution is
\begin{equation*}
u(t,x,y)=\left\{\begin{array}{ll}
-\big(|u_-|^{\frac{2}{3}}+\frac{2}{3}t\big)^{\frac{3}{2}} 
\quad&{\rm if}\ \big\{x+\frac{3}{5}\big(\big(|u_-|^{\frac{2}{3}}+\frac{2}{3}t\big)^{\frac{5}{2}}
-|u_-|^{\frac{5}{3}}\big)\big\}^3+y\\[2mm]
 &\quad\,\, +\frac{3}{11}\big({-}|u_-|^{\frac{11}{3}}+\big(|u_-|^{\frac{2}{3}}
 +\frac{2}{3}t\big)^{\frac{11}{2}}\big)< 0,\, t>0,
\\[2mm]
-\big(|c|^{\frac{2}{3}}+\frac{2}{3}t\big)^{\frac{3}{2}}&{\rm if}\ (t,x,y)\in \Omega_R,
\\[2mm]
-\big(\frac{2}{3}(t-t_0)\big)^{\frac{3}{2}}&
{\rm if}\ \big\{x+\frac{3}{5}
\big(\frac{2}{3}(t-t_0)\big)^{\frac{5}{2}}\big\}^3
 +y+\frac{3}{11}\big(\frac{2}{3}(t-t_0)\big)^{\frac{11}{2}}=0\\[1mm]
&\quad\,\,\mbox{for some } t_0\geq 0, \,\mbox{and}\,\,\, (t,x,y)\in \Omega^0_-,
\\[2mm]
0 &{\rm if}\ x^3+y>0,
\end{array}\right.
\end{equation*}
where $c=c(t,x,y)<0$ on $\Omega_R$ is the unique root of the following equation:
\begin{equation}\label{phi}
\Phi(t,x,y,c):=\Big\{x+\frac{3}{5}\big(\big(c^{\frac{2}{3}}+\tfrac{2}{3}t\big)^{\frac{5}{2}}
+c^{\frac{5}{3}}\big)\Big\}^3
+y+\frac{3}{11}\Big(\big(c^{\frac{2}{3}}
+\tfrac{2}{3}t\big)^{\frac{11}{2}}+c^{\frac{11}{3}}\Big)=0.
\end{equation}

\begin{figure}
	\begin{center}
		{\includegraphics[width=0.7\columnwidth]{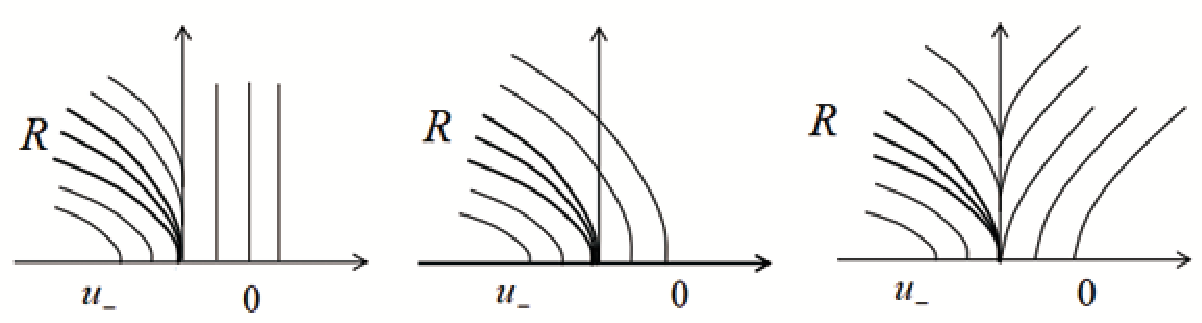}}
  \caption{$u_-<0, u_+=0.$
		}\label{fig5}
	\end{center}
\end{figure}

\smallskip
{\it Solution} 2: By choosing $u_l=\bar{u}(t,u_-)<0$ 
and $u_r=\bar{u}(t,0)=-\big(\frac{2}{3}t\big)^{\frac{3}{2}}$,  
the solution is
\begin{equation*}
u(t,x,y)=\left\{\begin{array}{ll}
-\big(|u_-|^{\frac{2}{3}}+\frac{2}{3}t\big)^{\frac{3}{2}} 
\quad &{\rm if}\ \big\{x+\frac{3}{5}\big(\big(|u_-|^{\frac{2}{3}}+\frac{2}{3}t\big)^{\frac{5}{2}}
-|u_-|^{\frac{5}{3}}\big)\big\}^3+y
\\[2mm]
& \quad\,\, +\frac{3}{11}\big({-}|u_-|^{\frac{11}{3}}+\big(|u_-|^{\frac{2}{3}}
+\frac{2}{3}t\big)^{\frac{11}{2}}\big)< 0, \, t>0,
\\[2mm]
-\big(|c|^{\frac{2}{3}}+\frac{2}{3}t\big)^{\frac{3}{2}} &{\rm if}\ (t,x,y)\in \Omega_R,
\\[2mm]
-\big(\frac{2}{3}t\big)^{\frac{3}{2}} 
&{\rm if}\ \big(x+\frac{3}{5}\big(\frac{2}{3}t\big)^{\frac{5}{2}}\big)^3
+y+\frac{3}{11}\big(\frac{2}{3}t\big)^{\frac{11}{2}} > 0,
\\[2mm]
\end{array}\right.
\end{equation*}
where $c=c(t,x,y)<0$ on $\Omega_R$ is the unique root of $\Phi(t,x,y,c)=0$ in $\eqref{phi}$.

\smallskip
{\it Solution} 3: By choose $u_l=\bar{u}(t,u_-)<0$ 
and $u_r=\bar{u}(t,0)=\big(\frac{2}{3}t\big)^{\frac{3}{2}}$, 
the solution is
\begin{equation*}
u(t,x,y)=\left\{\begin{array}{ll}
-\big(|u_-|^{\frac{2}{3}}+\frac{2}{3}t\big)^{\frac{3}{2}} 
\quad &{\rm if}\ \big\{x+\frac{3}{5}\big(\big(|u_-|^{\frac{2}{3}}+
\frac{2}{3}t\big)^{\frac{5}{2}}-|u_-|^{\frac{5}{3}}\big)\big\}^3+y\\[2mm]
 & \quad\,\,+\frac{3}{11}\big({-}|u_-|^{\frac{11}{3}}
 +\big(|u_-|^{\frac{2}{3}}+\frac{2}{3}t\big)^{\frac{11}{2}}\big)< 0,\,\, t>0,
\\[2mm]
-\big(|c|^{\frac{2}{3}}+\frac{2}{3}t\big)^{\frac{3}{2}}&{\rm if}\ (t,x,y)\in \Omega_R,
\\[2mm]
-\big(\frac{2}{3}(t-t_0)\big)^{\frac{3}{2}}
&{\rm if}\ 
\big\{x+\frac{3}{5}\big(\frac{2}{3}(t-t_0)\big)^{\frac{5}{2}}\big\}^3
 +y+\frac{3}{11}\big(\frac{2}{3}(t-t_0)\big)^{\frac{11}{2}}= 0
 \\[2mm]
 &\quad\,\, \mbox{for some }  t_0\geq 0, \ \mbox{and } (t,x,y)\in \Omega^0_-,
\\[2mm]
\big(\frac{2}{3}(t-t_0)\big)^{\frac{3}{2}}
&{\rm if}
\big\{x-\frac{3}{5}\big(\frac{2}{3}(t-t_0)\big)^{\frac{5}{2}}\big\}^3
  +y-\frac{3}{11}\big(\frac{2}{3}(t-t_0)\big)^{\frac{11}{2}}= 0 \\[2mm]
 & \quad\,\, \mbox{for some }  t_0\geq 0,\, \, \mbox{and } (t,x,y)\in \Omega^0_+,\\[2mm]
\big(\frac{2}{3}t\big)^{\frac{3}{2}} 
&{\rm if}\ \big(x-\frac{3}{5}\big(\frac{2}{3}t\big)^{\frac{5}{2}}\big)^3
+y-\frac{3}{11}\big(\frac{2}{3}t\big)^{\frac{11}{2}} > 0,
\end{array}\right.
\end{equation*}
where $c=c(t,x,y)<0$ on $\Omega_R$ is the unique root of $\Phi(t,x,y,c)=0$ in $\eqref{phi}$.

All above, regions $\Omega_R$ and $\Omega^0_\pm$ are given by
\begin{equation*}
 \Omega_R=\left\{(t,x,y)\,\left|\,\begin{array}{l}
 \big\{x+\frac{3}{5}\big(\big(|u_-|^{\frac{2}{3}}
 +\frac{2}{3}t\big)^{\frac{5}{2}}-|u_-|^{\frac{5}{3}}\big)\big\}^3+y\\[2mm]
 \quad +\frac{3}{11}\big({-}|u_-|^{\frac{11}{3}}
 +\big(|u_-|^{\frac{2}{3}}+\frac{2}{3}t\big)^{\frac{11}{2}}\big)\geq 0,
\\[2mm]
 \big(x+\frac{3}{5}\big(\frac{2}{3}t\big)^{\frac{5}{2}}\big)^3
 +y+\frac{3}{11}\big(\frac{2}{3}t\big)^{\frac{11}{2}} \leq 0,\, t>0
\end{array}\right.
\right\},  
\end{equation*}

\begin{equation*}
\Omega^0_-=\left\{(t,x,y)\,\Big|\,
\big(x+\frac{3}{5}\big(\tfrac{2}{3}t\big)^{\frac{5}{2}}\big)^3
+y+\frac{3}{11}\big(\tfrac{2}{3}t\big)^{\frac{11}{2}} \geq 0,\, x^3+y \leq 0
\right\},
\end{equation*}

\begin{equation*}
\Omega^0_+=\left\{(t,x,y)\,\Big|\,
 \big(x-\frac{3}{5}\big(\tfrac{2}{3}t\big)^{\frac{5}{2}}\big)^3+y-
\frac{3}{11}\big(\tfrac{2}{3}t\big)^{\frac{11}{2}} \leq 0,\, x^3+y \geq 0
\right\}.
\end{equation*}

\begin{remark}
As shown above, the non-uniqueness of the solutions of the Riemann problem $\eqref{6.13}$ inherits from 
the non-uniqueness of the solutions of the corresponding characteristic ODEs 
with initial data $\bar{u}(0,s)=s=0$. 
Furthermore, since $u^{\frac{1}{3}}$ is not right-Lipschitz at $u=0$, for any $T >0$,
we can choose $\bar{u}(t,0)=0$ if $t\leq T$ 
and $\bar{u}(t,0)=(\frac{2}{3}(t-T))^{\frac{3}{2}}$
or $-(\frac{2}{3}(t-T))^{\frac{3}{2}}$ if $t\geq T$. 
Therefore, there exist \emph{infinitely many} different shock waves and rarefaction waves.
\end{remark}

\medskip
~\\ \textbf{Acknowledgements}.
The research of Gaowei Cao was supported in part by the National Natural Science Foundation
of China No.11701551 and the China Scholarship Council No.20200491
0200.
The research of Gui-Qiang G. Chen was supported in part by 
the UK Engineering and Physical Sciences Research Council Awards 
EP/V008854 and EP/V051121/1.
The research of Wei Xiang was supported in part by the
Research Grants Council of the HKSAR, China (Project No.  CityU 11300021, CityU 11311722, 
CityU 11305523, and CityU 11305625), and HK PolyU Internal project P0045335.
The research of Xiaozhou Yang was supported in part by the NSFC (Grant 11471332).

\bigskip

\end{document}